\newcommand{\bbR}{\mathbb R}
\newcommand{\bfB}{\mathbf B}
\newcommand{\bfD}{\mathbf D}
\newcommand{\bfH}{\mathbf{H}}
\newcommand{\bfL}{\mathbf L}
\newcommand{\bfM}{\mathbf M}
\newcommand{\bfR}{\mathbf R}
\newcommand{\bfT}{\mathbf T}
\newcommand{\calT}{\mathcal T}
\newcommand{\calU}{\mathcal U}
\newcommand{\calW}{\mathcal W}
\newcommand{\nor}{\operatorname{nor}}
\newcommand{\vol}{\operatorname{vol}}
\newcommand{\dif}{{\mathrm d}}
\newcommand{\exteriorderivative}{{\mathrm d}}
\newcommand{\trace}{\operatorname{tr}}
\newcommand{\Trace}{\operatorname{Tr}}
\newcommand{\Ext}{\operatorname{Ext}}
\newcommand{\Id}{\operatorname{Id}}
\newcommand{\Proj}{P}
\newcommand{\ProjD}{Q}
\newcommand{\Projection}{\mathscr P}
\newcommand{\card}[1]{\left| #1 \right|}
\newcommand{\dx}{\;\dif x}
\newcommand{\grad}{\operatorname{grad}}
\newcommand{\curl}{\operatorname{curl}}
\newcommand{\divergence}{\operatorname{div}}
\newcommand{\supersimplices}{\nabla}
\newcommand{\calP}{\mathcal{P}}
\newcommand{\Lagrange}{\mathbf{P}}
\newcommand{\Nedfst}{\mathbf{Ned}^{\mathrm{fst}}}
\newcommand{\Nedsnd}{\mathbf{Ned}^{\mathrm{snd}}}
\newcommand{\BDM}{\bfB\bfD\bfM}
\newcommand{\RT}{\bfR\bfT}
\newcommand\suchthat{\@ifstar
  {\mathrel{}\middle|\mathrel{}}
  {\mid}}
\newtheorem{theorem}{Theorem}[section]
\newtheorem{lemma}[theorem]{Lemma}
\newtheorem{proposition}[theorem]{Proposition}
\newtheorem{remark}[theorem]{Remark}
\newtheorem{example}[theorem]{Example}
\newtheorem{corollary}[theorem]{Corollary}
\newtheorem{convention}[theorem]{Convention}
\begin{document}

\title
[Averaging-based local projections in FEEC]
{Averaging-based local projections\\in finite element exterior calculus}
\thanks{This material is based upon work supported by the National Science Foundation 
under Grant No.\ DMS-1439786 while the author was in residence at the Institute for Computational and 
Experimental Research in Mathematics in Providence, RI, during the ``Advances in Computational Relativity'' program. 
}
\
\author{Martin W.\ Licht}
\address{\'Ecole Polytechnique F\'e\'derale Lausanne (EPFL), 1015, Lausanne, Switzerland}
\email{martin.licht@epfl.ch}

\subjclass[2000]{65N30}

\keywords{broken Bramble-Hilbert lemma, finite element exterior calculus, Ern-Guermond interpolant}

\begin{abstract}
 We develop projection operators onto finite element differential forms over simplicial meshes. 
 Our projection is locally bounded in Lebesgue and Sobolev-Slobodeckij norms,
 uniformly with respect to mesh parameters.
Moreover, it incorporates homogeneous boundary conditions 
 and satisfies a local broken Bramble-Hilbert estimate. 
 The construction principle includes the Ern-Guermond projection 
 and a modified Cl\'ement-type interpolant with the projection property.
 The latter seems to be a new result even for Lagrange elements. 
 This projection operator immediately enables an equivalence result 
 on local- and global-best approximations.
 We combine techniques for the Scott-Zhang and Ern-Guermond projections
 and adopt the framework of finite element exterior calculus. 
 We instantiate the abstract projection for Brezzi-Douglas-Marini, N\'ed\'elec, and Raviart-Thomas elements. 
\end{abstract}

\maketitle

\section{Introduction}

Establishing convergence rates for finite element methods relies on interpolation operators.
These are widely documented for Lagrange elements, famous examples being the Cl\'ement and Scott-Zhang interpolants.
But only recently have these interpolants been generalized to the vector-valued finite elements that are known as Brezzi-Douglas-Marini, N\'ed\'elec, and Raviart-Thomas elements. 
Several classical and recent results on finite element interpolants have been transferred to the vector-valued setting over the last years~\cite{ern2017finite,licht2021,chaumont2021equivalence,ern2022equivalence}. 

In this article we study an interpolant for scalar and vector fields based on local weighted averaging.
The Ern-Guermond interpolant and a Cl\'ement-type interpolant are special cases of the construction. 
Our interpolant has the following properties. Firstly, it is locally stable in Lebesgue and Sobolev-Slobodeckij norms, uniformly with respect to the local mesh size. Secondly, it can impose homogeneous traces along a fixed part of the domain boundary. Thirdly, it is a projection on the finite element space. Lastly, it satisfies a \emph{broken Bramble-Hilbert lemma}~\cite{veeser2016approximating,camacho2015L2}.
\\

We now give an overview of the broader context of this research,
its motivation, and some mathematical tools.
One necessary step in the convergence analysis of finite element methods is estimating best approximation errors: it is precisely that step that yields convergence rates in terms of the mesh size. 
The standard approach to analyzing the approximation error uses the canonical (or \emph{Lagrange}) interpolant, which is defined via the degrees of freedom. 
This suffices when the interpolated function is smooth enough.
However, the constants in the estimate are hard to control, and the overall idea faces practical limitations.
An example are three-dimensional $\curl$-$\curl$ problems over domains with reentrant corners:
the solution vector field is not smooth enough for the canonical interpolant to be defined
\cite{monk2003finite,costabel1991coercive,amrouche1998vector}.
We know of interpolants that require less regularity, primarily for scalar finite elements. 
The Cl\'ement interpolant~\cite{clement1975approximation} enables localized error estimates
but is well-defined even over functions in Lebesgue spaces and not idempotent.
The interpolant by Scott and Zhang~\cite{scott1990finite} requires some Sobolev or Sobolev-Slobodeckij regularity, 
but it is a projection and shows better properties in approximating boundary values.
These well-known results suffice for deriving convergence rates in geometrically conforming settings. 
\\

Additional challenges arise in geometrically non-conforming situations, as we now illustrate. 
Suppose a scalar function on a \emph{physical domain} is approximated in a finite element space over a triangulated \emph{parametric domain}. We need a transformation between the physical and the parametric domains for comparing the original function with any finite element approximation. In practice, such transformations are bidirectionally Lipschitz; see also Figure~\ref{fig:geometry}. But then we face a dilemma: on the one hand, transforming any finite element approximation onto the physical domain generally does not preserve polynomials; on the other hand, transforming the original function onto the parametric domain generally does not preserve higher global regularity. In neither case can standard error estimates be applied.
Such situations arise in finite element error analysis over manifolds, surfaces, and domains with non-polyhedral boundary, and irrespective of whether the transformation is explicitly known or implicitly assumed.
What resolves the aforementioned dilemma is that, in practice, the transformations are piecewise smooth and thus preserve the original regularity \emph{piecewise}. 
Having transformed the original solution onto the parametric domain, 
we develop an interpolant onto the conforming finite element space 
that can exploit the piecewise regularity of the transformed solution. 

\begin{figure}
    \centering
    \begin{tabular}{cc}
        \begin{tikzpicture}[scale=2.5]
            \coordinate (P0) at (  0.0,  0.0, 0.0 );
            \coordinate (P1) at (  0.5,  0.0, 0.0 );
            \coordinate (P2) at (  0.0,  0.5, 0.0 );
            \coordinate (P3) at (  1.0,  0.0, 0.0 );
            \coordinate (P4) at (  0.0,  1.0, 0.0 );
            \coordinate (P5) at (  0.5,  0.5, 0.0 );
            \filldraw[fill={rgb:black,1;white,9}, draw=black] (P0) -- (P1) -- (P2) -- cycle;
            \filldraw[fill={rgb:black,1;white,9}, draw=black] (P1) -- (P2) -- (P5) -- cycle;
            \filldraw[fill={rgb:black,1;white,9}, draw=black] (P1) -- (P3) -- (P5) -- cycle;
            \filldraw[fill={rgb:black,1;white,9}, draw=black] (P2) -- (P4) -- (P5) -- cycle;
            \coordinate (Q0) at (  -0.0,  0.0, 0.0 );
            \coordinate (Q1) at (  -0.5,  0.0, 0.0 );
            \coordinate (Q2) at (  -0.0,  0.5, 0.0 );
            \coordinate (Q3) at (  -1.0,  0.0, 0.0 );
            \coordinate (Q4) at (  -0.0,  1.0, 0.0 );
            \coordinate (Q5) at (  -0.5,  0.5, 0.0 );
            \filldraw[fill={rgb:black,1;white,9}, draw=black] (Q0) -- (Q1) -- (Q2) -- cycle;
            \filldraw[fill={rgb:black,1;white,9}, draw=black] (Q1) -- (Q2) -- (Q5) -- cycle;
            \filldraw[fill={rgb:black,1;white,9}, draw=black] (Q1) -- (Q3) -- (Q5) -- cycle;
            \filldraw[fill={rgb:black,1;white,9}, draw=black] (Q2) -- (Q4) -- (Q5) -- cycle;
        \end{tikzpicture}
        \begin{tikzpicture}[scale=2.5]
            \filldraw[fill={rgb:black,1;white,9}, draw=black] (0,0) -- ( 1.0,0) arc (0: 90: 1.0) -- (0,0);
            \filldraw[fill={rgb:black,1;white,9}, draw=black] (0,0) -- (-1.0,0) arc (0:-90:-1.0) -- (0,0);
            
            \draw[draw=black]
                (0.5,0)
                .. controls (  0.5/2 + 0.70710678118/2 + 0.1, 0.70710678118/2 - 0.1 ) ..
                (0.70710678118,0.70710678118)
                .. controls (  0.70710678118/2 - 0.1, 0.5/2 + 0.70710678118/2 + 0.1) ..
                (0,0.5)
                .. controls ( -0.70710678118/2 + 0.1, 0.5/2 + 0.70710678118/2 + 0.1) ..
                (-0.70710678118,0.70710678118)
                .. controls ( -0.5/2 - 0.70710678118/2 - 0.1, 0.70710678118/2 - 0.1 ) ..
                (-0.5,0)
                ;
            \draw[draw=black]
                (0.5,0)
                .. controls ( 0.25+0.1,0.25+0.1) ..
                (0,0.5)
                .. controls (-0.25-0.1,0.25+0.1) ..
                (-0.5,0)
                ;

        \end{tikzpicture}
    \end{tabular} 
    \caption{
    A triangulated parametric domain (left) and a physical domain (right) that is the formers image under a bi-Lipschitz piecewise smooth transformation. The image of the triangulation is drawn within the physical domain too.
    }
    \label{fig:geometry}
\end{figure}
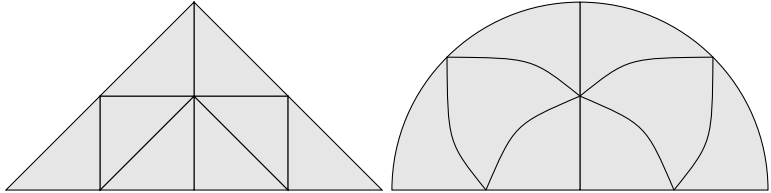
 
The Cl\'ement interpolant cannot recover higher convergence rates:
its higher-order interpolation estimates require higher regularity of the solution over patches,
but the physical solution transformed onto the parametric domain generally has no such regularity beyond $H^{1}$ over patches.
The Scott-Zhang interpolant, though, has a remedial feature that has risen to awareness in recent work by Veeser \cite{veeser2016approximating} and by Camacho and Demlow \cite{camacho2015L2}. It is known as \emph{broken Bramble-Hilbert lemma}. 
An outline is as follows: 
if $T$ is a cell of the triangulation, then the Scott-Zhang interpolation $\widetilde u$ of a function $u \in H^{1}$ satisfies 
\begin{align*}
 \| u - \widetilde u \|_{L^{2}(T)} \leq C h^{s}_{T} \sum_{ T \cap T' \neq \emptyset } \| u \|_{H^{s}(T')}
 .
\end{align*}
Informally, any function in $H^{1}$ can be approximated by continuous finite elements just as well as by discontinuous finite elements. This is also known as the equivalence of the global and local best approximations. 
We point that the Cl\'ement interpolant itself does not satisfy such a broken Bramble-Hilbert lemma.
Our exposition develops a very similar but apparently new Cl\'ement-type interpolant 
that does enable a broken Bramble-Hilbert lemma. 
\\

Whereas interpolation operators for scalar finite elements are standard in the literature, 
recent research has contributed new interpolation operators for the vector field spaces $\bfH(\curl)$ and $\bfH(\divergence)$. 
To the author's best knowledge, 
Ern and Guermond \cite{ern2017finite} were the first to explicitly address interpolation error estimates for vector-valued finite element spaces. Their projection is bounded in Lebesgue spaces.
Their discussion of boundary conditions relies on Sobolev trace theory, though. 
This does not cover the boundary conditions in $\bfH(\curl)$ and $\bfH(\divergence)$
that are defined via an integration by parts formula, 
and it also forecloses error estimates in the geometrically non-conforming situation. 

Boundary conditions in vector analysis show qualities not present in the scalar-valued setting.
Tangential and normal boundary conditions in vector analysis may not only be defined via Sobolev trace theory but alternatively via a generalized integration by parts formula. Generalizing interpolants and their error estimates to vector-valued finite element spaces needs to accommodate that new quality. 
Successive research \cite{licht2021} has contributed Cl\'ement and Scott-Zhang interpolants for finite element vector fields. The Cl\'ement interpolants are bounded in Lebesgue spaces but do not satisfy a broken Bramble-Hilbert lemma. The Scott-Zhang interpolants are bounded over $\bfH(\curl)$ and $\bfH(\divergence)$,
thus requiring more regularity, and satisfy the broken Bramble-Hilbert lemma known from their scalar-valued inspiration.

When the function to be approximated is sufficiently regular, then Veeser's results establish the equivalence of errors by \emph{local} and \emph{global}, or \emph{conforming} and \emph{non-conforming}, finite element approximations. Corresponding inequalities hold for curl- and divergence-conforming approximations \cite{ern2022equivalence,chaumont2021equivalence}. We derive an analogous comparison in finite element exterior calculus. Specifically: if an $L^{p}$-regular differential form has an $L^{p}$-regular exterior derivative, $p \in [1,\infty]$, then the approximations via conforming- and non-conforming finite element spaces produce comparable errors, 
up to higher order terms.

We emphasize that our projection is very different from the commuting interpolants discussed for finite element de~Rham complexes \cite{demkowicz2005h1,AFW1,christiansen2011topics,christiansen2008smoothed,falk2014local,ern2016mollification,licht2019smoothed,licht2019mixed} but serves a complementary purpose. Whereas commuting projections establish the quasi-optimality of finite element solutions, our projection establishes specific convergence rates of best approximations in terms of the mesh size.
\\

A central tool for our analysis are representations of the degrees of freedom by integrals on volumes and facets. 
We borrow this in part from the work of Scott and Zhang. However, while they represent degrees of freedom shared between elements with boundary integrals on facets,
we also use volume integrals based on an integration by parts formula. Thus, our estimates apply not only to differential forms with well-defined Sobolev traces, but also to rough forms such as $\bfH(\curl)$. The latter result is crucial for our application to non-conforming geometries.

Essentially, our operator satisfies the estimates of the Scott-Zhang-type operator, but is continuous on Lebesgue spaces.
In particular, our projection satisfies the broken Bramble-Hilbert lemma over $\bfH(\curl)$ or $\bfH(\divergence)$ vector fields. 
Unlike the Scott-Zhang interpolant, however, the new projection is bounded also over Lebesgue spaces. 
In fact, the interpolant also gives approximation results for forms in Lebesgue spaces,
but then any extra Sobolev-Slobodeckij regularity must be global, as it must be for the classical Cl\'ement interpolant.

There is a certain leeway in our construction; this has virtually no effect on the mathematical properties but allows us to relate the construction with several other interpolants. 
The interpolant of Ern and Guermond is a special case of our operator and we reproduce its most important properties. 
In particular, we show that Ern-Guermond interpolant also satisfies a broken Bramble-Hilbert lemma
both for fields with sufficient global Sobolev-Slobodeckij regularity and for spaces such as $\bfH(\curl)$ or $\bfH(\divergence)$.

Another special case of our interpolant is what one may call a modified Cl\'ement interpolant.
While the original Cl\'ement interpolant evaluates the degrees of freedom on patchwise projections,
we propose to evaluate the degrees of freedom on elementwise projections instead. 
With this simple variation we not only retain all favorable properties of the Cl\'ement interpolant, 
but in addition we get a projection and a broken Bramble-Hilbert lemma.
This seems to be a new result even for Lagrange elements. 
\\

We remark on the general picture of interpolation operators for low regularity fields.
While our new interpolant and the Scott-Zhang interpolant have similar properties, there are some important differences. 
Our averaging-based interpolant is bounded over Lebesgue spaces whereas the Scott-Zhang interpolant 
requires enough regularity for traces to be well-defined. 
Homogeneous boundary conditions are ``hardcoded'' into the averaging-based interpolant: 
the interpolation always satisfies the boundary conditions and the approximation result is thus only valid for field satisfying the hardcoded boundary conditions in the first place. 
The Scott-Zhang interpolant is more subtle in imposing boundary conditions: it approximates the boundary values at any fixed boundary part. If the boundary values of the original field are zero, then the same is true for the Scott-Zhang interpolation. Its error estimates hold for all sufficiently regular fields. 
\\

The introduction up to this point has addressed our results in the language of vector analysis. However, the remainder of the manuscript will adopt the calculus of differential forms and the framework of finite element exterior calculus~\cite{AFW1,AFW2,hiptmair2002finite}. Only at the end will we return to the language of vector analysis to display our main results. 

The remainder of this writing is structured as follows.
In Section~\ref{sec:background} we review background on triangulations, function spaces, exterior calculus, and finite element spaces.
In Section~\ref{sec:roughdof}, biorthogonal systems of finite element bases and their degrees of freedom are discussed, and we fix some notational conventions for the rest of the manuscript. 
In Section~\ref{sec:mainresult}, we construct the averaging-based projection, our main result. 
In Section~\ref{sec:estimates},  we develop stability and approximation estimates. 
In Section~\ref{sec:equivalence}, we use the projection to compare local and global approximation errors.
Lastly, we review applications of our results in the language of vector analysis in Section~\ref{sec:anwendungen}.

\section{Background} \label{sec:background}

In this section we review background on triangulations, function spaces, differential forms, and finite element de~Rham complexes.
We also establish the notation, which generally follows the literature. 
Much of the content of this section is a summary of the background given in \cite{licht2021}.
We let $\Omega \subseteq \mathbb R^{n}$ be a connected, bounded open set throughout the remainder of this article. 

\subsection{Triangulations} \label{subsec:triangulation}

A \emph{simplex} of dimension $d$ is the convex closure of $d+1$ affinely independent points,
which we call the \emph{vertices} of that simplex.
A simplex $F$ is called a \emph{subsimplex} of a simplex $T$ if each vertex of $F$ is a vertex of $T$.
We write $\Delta(T)$ for the set of all subsimplices of a simplex $T$
and $\Delta_{d}(T)$ for the set of its $d$-dimensional subsimplices.
As is common in polyhedral theory, 
we reserve the term \emph{facet} for the codimension one subsimplices of a given simplex.

A \emph{simplicial complex} $\calT$ is a set of simplices 
such that $\Delta(T) \subseteq \calT$ for all $T \in \calT$
and for any two simplices $T, T' \in \calT$ with non-empty intersection 
$T \cap T'$ is a common subsimplex of $T$ and $T'$.
A \emph{simplicial subcomplex} of $\calT$ is any subset $\calU \subseteq \calT$ that is a simplicial complex by itself.
We write $\Delta_{d}(\calT)$ for the set of $d$-dimensional simplices of $\calT$.
Given any simplex $T \in \calT$, we write $\supersimplices_{d}(\calT,T)$
for the set of $d$-simplices in $\calT$ that contain $T$:
\begin{align*}
 \supersimplices_{d}(\calT,T) := \left\{ S \in \Delta_{d}(\calT) \suchthat S \subseteq T \right\}.
\end{align*}
Suppose that $T$ is a simplex of positive dimension $d$.
We write $h_{T}$ for its diameter, $\vol^{d}(T)$ for its $d$-dimensional Hausdorff volume,
and $\mu(T) = h_{T}^{d} / \vol^{d}(T)$ for its so-called \emph{shape measure}. 
The shape measure $\mu(\calT)$ of any simplicial complex $\calT$ 
is the supremum of the shape measures of all its non-vertex simplices.
There exists $C_{\rm Q}(\calT) > 0$, bounded in terms of the shape measure of $\calT$,
that bounds the ratio of the diameters of adjacent simplices. 
We let $C_{\rm N}(\calT) > 0$ be the maximum number of simplices adjacent to to any given simplex in $\calT$,
which is a quantity bounded in the shape measure of $\calT$.
The ``diameter'' of a simplex $V \in \calT$ is formally defined as the minimal length 
of all edges adjacent to that vertex. 

Lastly, we assume that all simplices are equipped with an arbitrary but fixed orientation. 
Whenever $T$ is a simplex and $F \in \Delta_{}(T)$ is one of its facets, we let $o(F,T) = 1$ if the orientation of $F$ is induced from $T$ and we let $o(F,T) = -1$ otherwise.

\subsection{Function spaces} \label{subsec:background:functionspaces}

We recapitulate several function spaces, in particular the Banach spaces that are known as Sobolev and Sobolev-Slobodeckij spaces \cite{slobodeckij1958generalized,burenkov1998sobolev,di2012hitchhiker,heuer2014equivalence}.
Even though we formally define those spaces over domains, we assume analogous definitions for the corresponding spaces simplices without further mentioning.

The space of smooth functions over $\Omega$ with bounded derivatives of all orders is denoted $C^{\infty}(\overline\Omega)$. 
We write $L^{p}(\Omega)$ for the Lebesgue space over $\Omega$ to the integrability exponent $p \in [1,\infty]$, 
equipped with the norm ${\|\cdot\|}_{L^{p}(\Omega)}$.
For any $\theta \in (0,1)$ we define the seminorm 
\begin{gather*}
    |\omega|_{W^{\theta,p}(\Omega)}
    :=
    \left\|
        | \omega(x) - \omega(y) | \cdot | x - y |^{\theta + \frac{n}{p}}
    \right\|_{L^{p}(\Omega \times \Omega)}
    ,
\end{gather*}
and let $W^{\theta,p}(\Omega)$ be the subspace of $L^{p}(\Omega)$ 
for which that seminorm is finite. 
We let $A(n)$ be the set of all multiindices over $\{1,\dots,n\}$. 
For any $k \in \mathbb N_0$,  
let $W^{k,p}(\Omega)$ be the \emph{Sobolev space} of measurable functions over $\Omega$ 
for which all distributional $\alpha$-th derivatives with $\alpha \in A(n)$ and $|\alpha| \leq k$ are functions in $L^{p}(\Omega)$.
We use the seminorm and norm 
\begin{gather*} 
 | \omega |_{W^{k,p}(\Omega)} 
 :=
 \sum_{ \substack{ \alpha \in A(n) \\ |\alpha| = k } } 
 \| \partial^{\alpha} \omega \|_{L^{p}(\Omega)}
 ,
 \quad 
 \| \omega \|_{W^{k,p}(\Omega)} 
 :=
 \sum_{l = 0}^{k} | \omega |_{W^{l,p}(\Omega)} 
.
\end{gather*}
When $k \in \mathbb N_{0}$ and $\theta \in (0,1)$, 
then the \emph{Sobolev-Slobodeckij} space
$W^{k+\theta,p}(\Omega)$ is defined as the subspace of $W^{k,p}(\Omega)$
whose member's derivatives of $k$-th order are also in $W^{\theta,p}(\Omega)$. 
We then consider the norms and seminorms 
\begin{gather*}
 | \omega |_{W^{k+\theta,p}(\Omega)} 
 :=
 \sum_{ \substack{ \alpha \in A(n) \\ |\alpha| = k } } 
 | \partial^{\alpha} \omega |_{W^{\theta,p}(\Omega)}
 ,
 \\
 \|\omega\|_{W^{k+\theta,p}(\Omega)}
 :=
 \|\omega\|_{W^{k,p}(\Omega)}
 +
 |\omega|_{W^{k+\theta,p}(\Omega)}
 . 
\end{gather*}
Thus we have defined the Banach space $W^{m,p}(\Omega)$ for all $m \in [0,\infty) $.
Its Banach space structure is induced by the norm $\|\cdot\|_{W^{m,p}(\Omega)}$.

For our discussion of boundary conditions, the following will be necessary:

\begin{theorem} \label{theorem:trace}
    Suppose that $\Omega$ is a bounded Lipschitz domain,
    and that $p \in [1,\infty]$ and $m \in [0,\infty)$ 
    with $m > 1/p$ or $s \geq 1$. 
    Then the trace of continuous bounded functions extends to a bounded operator 
    \begin{align*}
        \Trace : W^{m,p}(\Omega) \rightarrow L^{p}(\partial\Omega).
    \end{align*}
\end{theorem}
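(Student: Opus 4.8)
I would run the classical argument: localize near $\partial\Omega$ with a partition of unity, straighten the boundary by bi-Lipschitz charts, and reduce to an explicit estimate on the half-space $\bbR^{n}_{+} := \{ x = (x',x_{n}) \in \bbR^{n} : x_{n} > 0 \}$, whose boundary I identify with $\bbR^{n-1}$. Three preliminary reductions trim the statement. If $p = \infty$, a Morrey-type embedding shows every element of $W^{m,\infty}(\Omega)$ is represented by a H\"older continuous function on $\overline\Omega$, so the trace is the restriction and $\| \Trace u \|_{L^{\infty}(\partial\Omega)} \leq \| u \|_{L^{\infty}(\Omega)}$; this settles $p = \infty$. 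If $p < \infty$, then $C^{\infty}(\overline\Omega)$ is dense in $W^{m,p}(\Omega)$ because $\Omega$ is Lipschitz, so it suffices to prove $\| \Trace u \|_{L^{p}(\partial\Omega)} \leq C \| u \|_{W^{m,p}(\Omega)}$ for smooth $u$ and then pass to the limit. Finally $W^{m,p}(\Omega) \hookrightarrow W^{1,p}(\Omega)$ on a bounded domain whenever $m \geq 1$, so the case $m \geq 1$ reduces to $m = 1$. Hence only two half-space estimates remain: for $(m,p)$ with $m = 1$, $1 \leq p < \infty$, and with $1/p < m < 1$, $1 < p < \infty$.

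For the localization step I would cover $\partial\Omega$ by finitely many open sets on each of which, in suitable orthonormal coordinates, $\Omega$ is the region above a Lipschitz graph, adjoin one open set compactly contained in $\Omega$ to cover $\overline\Omega$, and fix a smooth partition of unity $\{\phi_{i}\}$ subordinate to this cover. In $u = \sum_{i}\phi_{i}u$ the interior term contributes nothing to the trace; multiplication by a fixed smooth cutoff is bounded on $W^{m,p}$; and a bi-Lipschitz change of variables is a bounded isomorphism of $W^{s,p}$ for $s \in [0,1]$, since one may change variables both in the $L^{p}$ norm and in the double integral $\int\!\int |v(z)-v(w)|^{p}|z-w|^{-(n+sp)}\,\dif z\,\dif w$, using that the Jacobian and the distortion of distances are bounded above and below away from zero; moreover it intertwines $\Trace$ with the induced bi-Lipschitz map of boundary charts, which preserves $L^{p}$ of the surface measure. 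Everything therefore reduces to the estimate
\begin{align*}
 \| u(\cdot,0) \|_{L^{p}(\bbR^{n-1})} \leq C \| u \|_{W^{s,p}(\bbR^{n}_{+})},
 \qquad
 u \in C^{\infty}_{c}\bigl( \overline{\bbR^{n}_{+}} \bigr),
\end{align*}
for $s = 1$ and for $s \in (1/p,1)$.

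The case $s = 1$ is the fundamental theorem of calculus applied in the normal direction: from $u(x',0) = -\int_{0}^{\infty}\partial_{n}u(x',t)\,\dif t$ one gets the bound by integrating over $x'$ when $p = 1$, and by first applying this identity to $|u|^{p}$ via the chain rule and then using H\"older's inequality when $1 < p < \infty$. For $s \in (1/p,1)$ I would telescope over dyadic box averages. For $x' \in \bbR^{n-1}$ and $j \geq 0$ set $B_{j}(x') := \{ y' : |y'-x'| < 2^{-j} \}\times(0,2^{-j})$ and $u_{j}(x') := |B_{j}(x')|^{-1}\int_{B_{j}(x')}u$; then $u_{j}(x') \to u(x',0)$ and
\begin{align*}
 u(x',0) = u_{0}(x') + \sum_{j \geq 0}\bigl( u_{j+1}(x') - u_{j}(x') \bigr).
\end{align*}
H\"older's inequality and the scaling of $B_{j}$ bound the $j$-th increment pointwise by $C\,2^{j(n-sp)/p}\,g_{j}(x')$, where $g_{j}(x') := \Bigl( \int_{B_{j}(x')}\!\int_{B_{j}(x')} \tfrac{|u(z)-u(w)|^{p}}{|z-w|^{n+sp}}\,\dif z\,\dif w \Bigr)^{1/p}$. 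Taking $L^{p}(\bbR^{n-1})$ norms in $x'$ and using Fubini together with the fact that $\{ x' : z,w \in B_{j}(x') \}$ has $(n-1)$-measure at most $C\,2^{-j(n-1)}$ and is empty unless $|z-w| \lesssim 2^{-j}$, one obtains $\| g_{j} \|_{L^{p}(\bbR^{n-1})} \leq C\,2^{-j(n-1)/p}\,\| u \|_{W^{s,p}(\bbR^{n}_{+})}$. Together with $\| u_{0} \|_{L^{p}(\bbR^{n-1})} \leq C\|u\|_{L^{p}(\bbR^{n}_{+})}$ from Jensen's inequality and Fubini, summation gives the claim, the series $\sum_{j\geq 0} 2^{j(1-sp)/p}$ converging precisely because $sp > 1$.

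The one genuinely delicate point is this last fractional estimate. The difficulty is that the trace point $(x',0)$ lies on $\partial\bbR^{n}_{+}$ rather than in the open half-space, so it cannot be inserted directly into the Gagliardo seminorm of $u$ over $\bbR^{n}_{+}$; the dyadic telescoping is exactly the device that circumvents this, and the hypothesis $m > 1/p$ is precisely what makes the resulting scale-by-scale series summable. The remaining ingredients --- the partition of unity, the stability of the norms under smooth cutoffs and bi-Lipschitz maps, and the half-space bound at $s = 1$ --- are routine.
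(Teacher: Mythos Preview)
Your argument is correct and essentially self-contained, but it takes a very different route from the paper. The paper does not prove the statement from scratch: it simply quotes known results, citing Theorem~3.10 in Ern--Guermond's textbook for $1 \leq p < \infty$, pointing additionally to a theorem of Mitrea--Mitrea--Shaw for $1 < p < \infty$, and disposing of $p = \infty$ by identifying $W^{m,\infty}$ with the H\"older class, just as you do. By contrast, you carry out the full localize--flatten--reduce-to-half-space program and supply explicit half-space estimates, including the dyadic telescoping argument for the fractional range $1/p < s < 1$. What the paper's approach buys is brevity and a clean appeal to the literature, appropriate since the trace theorem is background rather than a contribution of the paper; what your approach buys is transparency about where the threshold $m > 1/p$ enters (namely, summability of $\sum_{j} 2^{j(1-sp)/p}$) and independence from external references. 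Both are sound; yours is considerably more work than the paper intends at this point.
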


\begin{proof}
    The case $1 \leq p < \infty$ is covered by Theorem~3.10 in \cite{ern2021finite}.
    For $1 < p < \infty$, see also Theorem~B in \cite{mitrea2008traces}.
    The case $p = \infty$ follows if we recall that $W^{m,\infty}\Lambda^{k}(T)$
    is the H\"older space with smoothness index $m$. 
\end{proof}

Whenever $\Gamma \subseteq \partial\Omega$ is any relatively open set
and $p \in [1,\infty]$ and $m \in [0,\infty)$ with $m > 1/p$ or $s \geq 1$,
then we define 
\begin{align*}
    W^{m,p}(\Omega,\Gamma) := \left\{ \omega \in W^{m,p}(\Omega) \suchthat \Trace\omega_{|\Gamma} = 0 \right\}
    .
\end{align*}

\subsection{Spaces of differential forms} \label{subsec:background:differentialforms}

We review spaces of differential forms over domains and simplices. 
Since $0$-forms are functions, this generalizes the definitions in the preceding subsection.  
We let $C^{\infty}\Lambda^{k}(\overline\Omega)$ 
be the space of differential $k$-forms with coefficients in $C^{\infty}(\overline\Omega)$. 
The spaces $L^{p}\Lambda^{k}(\Omega)$ and $W^{m,p}\Lambda^{k}(\Omega)$ are defined accordingly 
for any $p \in [1,\infty]$ and $m \in [0,\infty)$
and we let ${\|\cdot\|}_{L^{p}\Lambda^{k}(\Omega)}$, ${\|\cdot\|}_{W^{m,p}\Lambda^{k}(\Omega)}$, and ${|\cdot|}_{W^{m,p}\Lambda^{k}(\Omega)}$
denote the associated norms and seminorms.

In accordance to Theorem~\ref{theorem:trace},
when $p \in [1,\infty]$ and $m \in [0,\infty)$ with $m > 1/p$ or $s \geq 1$,
then differential forms in $W^{m,p}\Lambda^{k}(\Omega)$ have components with well-defined traces. 
When $\Gamma \subseteq \partial\Omega$ is a relatively open set, 
then we let $W^{m,p}\Lambda^{k}(\Omega,\Gamma)$ be the subspace of those members of $W^{m,p}\Lambda^{k}(\Omega)$
for which the tangential components have vanishing trace along $\Gamma$.

We recall that the exterior product $\omega \wedge \eta$ of a $k$-form $\omega$ and an $l$-form $\eta$
satisfies $\omega \wedge \eta = (-1)^{kl} \eta \wedge \omega$ and is bilinear. 
We also recall 
the exterior derivative $\exteriorderivative : C^{\infty}\Lambda^{k}(\overline\Omega) \rightarrow C^{\infty}\Lambda^{k+1}(\overline\Omega)$,
which maps $k$-forms to $(k+1)$-forms.
We consider classes of differential $k$-forms with coefficients in Lebesgue spaces 
whose exterior derivative, a priori defined only in the sense of distributions, 
is again in a Lebesgue space. 
For $p,q \in [0,\infty]$ we define 
\begin{align*}
 \mathcal W^{p,q}\Lambda^{k}(\Omega)
 :=
 \left\{\; 
  \omega \in L^{p}\Lambda^{k}(\Omega)
  \suchthat* 
  \exteriorderivative\omega \in L^{q}\Lambda^{k+1}(\Omega)
 \;\right\}
 .
\end{align*}
As we discuss next, 
the spaces $\mathcal W^{p, q}\Lambda^{k}(\Omega)$ allow a notion of homogeneous boundary values in terms of an integration by parts formula.
This does not rely on Sobolev trace theory.

Suppose $\Gamma \subseteq \partial \Omega$ is a relatively open subset of $\partial \Omega$. 
We define $\mathcal W^{p,q}\Lambda^k(\Omega,\Gamma)$ as 
the subspace of $\mathcal W^{p,q}\Lambda^{k}(\Omega)$ whose members satisfy 
that 
for all $x \in \Gamma$ there exists a radius $\rho > 0$ such that
\begin{align*} 
 \int_{\Omega \cap B_{\rho}(x)} \omega \wedge \exteriorderivative \eta
 =
 (-1)^{k+1}
 \int_{\Omega \cap B_{\rho}(x)} {\exteriorderivative \omega} \wedge \eta
\end{align*}
for all $\eta \in C^{\infty}\Lambda^{n-k-1}\left(\bbR^{n}\right)$
with compact support contained in the open ball centered at $x$ of radius $\rho$.
We say that each $\omega \in \mathcal W^{p,q}\Lambda^k(\Omega,\Gamma)$ satisfies \emph{partial boundary conditions} along $\Gamma$.
The space $\mathcal W^{p,q}\Lambda^k(\Omega,\Gamma)$ is a closed subspace of $\mathcal W^{p,q}\Lambda^{k}(\Omega)$,
since the former is the intersection of closed subspaces of the latter. 
Note that, 
$\exteriorderivative \mathcal W^{p,q}\Lambda^{k}(\Omega,\Gamma) \subseteq \mathcal W^{q,r}\Lambda^{k+1}(\Omega,\Gamma)$
for all 
$p,q,r \in [1,\infty]$,
that is, partial boundary conditions are preserved under the exterior derivative.

The definitions in this subsection have been over a domain. 
We can set up exterior calculus and the classes of differential forms introduced above 
also over any $d$-dimensional simplex with minor technical modifications. 
Only the space $C^{\infty}\Lambda^{k}(S)$ and some of its subspaces will be needed.
Importantly, the integral $\int_{S} \omega$ of any integrable $k$-form over a $k$-dimensional simplex $S$ is well-defined.
We write $\trace_{S,F}$ for the trace from any simplex $S$ onto any of its subsimplices $F \in \Delta(S)$. 
The following trace lemma will be useful.

\begin{lemma} \label{lemma:tracelemma}
    Let $T$ be an $n$-dimensional simplex and $F \in \Delta_{n-1}(T)$ one of its facets.
    Let $p \in [1,\infty]$ and $m \in [0,\infty)$. If $m > \frac 1 p$ or $m \geq 1$
    then $\trace_{T,F} : W^{m,p}\Lambda^{k}(T) \rightarrow L^{p}\Lambda^{k}(F)$ is a bounded operator.
    We then also have\footnote{Here and in what follows, we stipulate $1/\infty = 0$.} 
    \begin{align*}
    \| \trace_{T,F} \omega \|_{L^{p}\Lambda^{k}(F)}
    \leq 
    C_{\trace}
    \left(
        h_{F}^{-\frac 1 p}
        \| \omega \|_{L^{p}\Lambda^{k}(T)}
        +
        h_{F}^{m-\frac 1 p}
        \left| \omega \right|_{W^{m,p}\Lambda^{k}(T)}
    \right)
    ,
    \quad 
    \omega \in W^{m,p}\Lambda^{k}(T)
    ,
    \end{align*}
    where $C_{\trace} > 0$ depends only on $p$, $m$, and $\mu(T)$.
\end{lemma}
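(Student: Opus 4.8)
The plan is to prove the scaled estimate first on a fixed reference simplex and then transport it to $T$ by an affine change of variables, tracking the powers of the diameter (noting $h_{F}\sim h_{T}$ with a constant depending only on $\mu(T)$, since $n$ and $k$ are fixed throughout). On the standard $n$-simplex $\widehat T$ with a facet $\widehat F$, the unscaled bound
\begin{align*}
 \| \trace_{\widehat T,\widehat F}\widehat\omega \|_{L^{p}\Lambda^{k}(\widehat F)}
 \leq
 \widehat C\bigl( \| \widehat\omega \|_{L^{p}\Lambda^{k}(\widehat T)} + | \widehat\omega |_{W^{m,p}\Lambda^{k}(\widehat T)} \bigr),
 \qquad \widehat\omega \in W^{m,p}\Lambda^{k}(\widehat T),
\end{align*}
holds with $\widehat C$ depending only on $n,k,p,m$: expanding $\widehat\omega$ in Cartesian coordinates reduces this to the scalar trace theorem for each coefficient, which is exactly Theorem~\ref{theorem:trace} under the hypothesis $m>1/p$ or $m\geq 1$ (with $L^{p}(\widehat F)\subseteq L^{p}(\partial\widehat T)$), while on the bounded Lipschitz domain $\widehat T$ the full Sobolev--Slobodeckij norm is equivalent to $\|\cdot\|_{L^{p}(\widehat T)}+|\cdot|_{W^{m,p}(\widehat T)}$ because the intermediate seminorms obey interpolation inequalities of Ehrling type. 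The case $p=\infty$ is just the statement that a H\"older function restricts to a H\"older function on $\widehat F$, with the convention $1/\infty=0$. Taking the maximum of $\widehat C$ over the $n+1$ facets removes the dependence on the choice of $\widehat F$.

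Next I would transport the estimate. Since $\mu(T)$ is bounded there is an affine bijection $\Phi\colon\widehat T\to T$ carrying $\widehat F$ onto $F$ whose linear part $A$ satisfies $\|A\|\sim h_{T}$, $\|A^{-1}\|\sim h_{T}^{-1}$, $|\det A|\sim h_{T}^{n}$, all constants depending only on $n$ and $\mu(T)$. Pullback commutes with trace, so $\trace_{\widehat T,\widehat F}(\Phi^{\ast}\omega)=(\Phi|_{\widehat F})^{\ast}\trace_{T,F}\omega$. Pulling back a $k$-form multiplies its Cartesian coefficients by $k\times k$ minors of $A$, hence by a factor of size $h_{T}^{k}$; each derivative in $\widehat x$ yields one further factor of $A$; and the changes of variables contribute Jacobians $|\det A|^{\pm1}\sim h_{T}^{\pm n}$ in the volume integrals and a factor $\sim h_{F}^{\pm(n-1)}$ on the facet (using $\vol^{n-1}(F)\sim h_{F}^{n-1}$). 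Collecting these yields
\begin{align*}
 \| \Phi^{\ast}\omega \|_{L^{p}\Lambda^{k}(\widehat T)} &\sim h_{T}^{\,k-n/p}\,\| \omega \|_{L^{p}\Lambda^{k}(T)},
 \qquad
 | \Phi^{\ast}\omega |_{W^{m,p}\Lambda^{k}(\widehat T)} \sim h_{T}^{\,m+k-n/p}\,| \omega |_{W^{m,p}\Lambda^{k}(T)},
 \\
 \| \trace_{\widehat T,\widehat F}(\Phi^{\ast}\omega) \|_{L^{p}\Lambda^{k}(\widehat F)} &\sim h_{F}^{\,k-(n-1)/p}\,\| \trace_{T,F}\omega \|_{L^{p}\Lambda^{k}(F)},
\end{align*}
with constants depending only on $n,k,p,m,\mu(T)$. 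Substituting these into the reference estimate applied to $\Phi^{\ast}\omega$, dividing by $h_{F}^{\,k-(n-1)/p}$, and using $h_{F}\sim h_{T}$, the diameter exponents collapse to $h_{T}^{-1/p}$ and $h_{T}^{\,m-1/p}$; replacing $h_{T}$ by the comparable $h_{F}$ gives exactly the asserted inequality with $C_{\trace}$ depending only on $p$, $m$, $\mu(T)$.

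The hard part is purely the scaling bookkeeping: one must correctly count the $k$ Jacobian factors produced by pulling back a $k$-form on both $\widehat T$ and on its $(n-1)$-dimensional facet, the $m$ factors coming from the derivatives, and the two Jacobian determinants, and one must verify that for fractional $m=l+\theta$ the Gagliardo seminorm transforms with the same power $h_{T}^{\,m+k-n/p}$ — which uses that $A$ distorts distances by a factor comparable to $h_{T}$ in both directions. A secondary but essential point, needed so that only the extreme-order terms appear on the right, is the equivalence of $\|\cdot\|_{L^{p}}+|\cdot|_{W^{m,p}}$ with the full Sobolev--Slobodeckij norm on the reference simplex.
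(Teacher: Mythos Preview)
Your proposal is correct and follows exactly the approach indicated in the paper's proof, which merely says to invoke Theorem~\ref{theorem:trace} on a reference simplex and then use a scaling argument. You have supplied the details the paper omits: the reduction to scalar traces via coefficient expansion, the Ehrling-type equivalence that lets you keep only the extreme seminorms on the reference simplex, and the bookkeeping of the affine pullback on $k$-forms (including the fractional Gagliardo case).
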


\begin{proof}
    Suppose first that $T$ is a reference simplex.
    Then the statement follows via Theorem~\ref{theorem:trace}.
    For general simplices, we can use a scaling argument. 
\end{proof}

\subsection{Finite element spaces over triangulations} \label{subsec:femspaces}

In this article we adopt the framework of finite element exterior calculus \cite{hiptmair2002finite,AFW1}, 
of which we review basic definition and notation.
We consider finite element spaces of polynomial differential forms,
their construction via traces and extension operators, 
and spaces of degrees of freedom. 

For any simplex $S$, 
we let $\mathcal P_{r}\Lambda^{k}(S)$ be the space of polynomial differential $k$-forms
of degree (at most) $r \geq 0$ over $S$,
and we $\mathcal P^{-}_{r}\Lambda^{k}(S)$ be the space of trimmed polynomial differential $k$-forms
of degree (at most) $r \geq 1$ over $S$;
we refer to the literature \cite{AFW1} for details of their definition. 

We consider spaces of polynomial differential forms satisfying boundary conditions.
Over any simplex $S$, these are defined by 
\begin{align*} 
 \mathring{\mathcal P}_{r}\Lambda^{k}(S) 
 &:=
 \left\{ 
    \omega \in \mathcal P_{r}\Lambda^{k}(S) 
    \suchthat 
    \forall F \in \Delta(S), F \neq S : \trace_{S,F} \omega = 0
 \right\},
 \\
 \mathring{\mathcal P}^{-}_{r}\Lambda^{k}(S) 
 &:=
 \left\{ 
    \omega \in \mathcal P^{-}_{r}\Lambda^{k}(S) 
    \suchthat* 
    \forall F \in \Delta(S), F \neq S : \trace_{S,F} \omega = 0
 \right\}.
\end{align*}
If $\calT$ is a triangulation of the domain $\Omega$,
then we define finite element spaces over triangulations via 
\begin{align*} 
 \mathcal P_{r}\Lambda^{k}(\calT)
 &:=
 \left\{ 
    \omega \in \mathcal W^{\infty,\infty}\Lambda^{k}(\Omega)
    \suchthat* 
    \forall T \in \Delta_{n}(\calT) : \omega_{|T} \in \mathcal P_{r}\Lambda^{k}(T)
 \right\}
 ,
 \\
 \mathcal P^{-}_{r}\Lambda^{k}(\calT)
 &:=
 \left\{ 
    \omega \in \mathcal W^{\infty,\infty}\Lambda^{k}(\Omega)
    \suchthat* 
    \forall T \in \Delta_{n}(\calT) : \omega_{|T} \in \mathcal P^{-}_{r}\Lambda^{k}(T)
 \right\}
 .
\end{align*}
For any simplicial complex $\calU \subseteq \calT$ we define formally 
\begin{gather*} 
 \mathcal P_{r}\Lambda^{k}(\calT,\calU)
 :=
 \left\{\; 
  u \in \mathcal P_{r}\Lambda^{k}(\calT)
  \suchthat 
  \forall F \in \calU : \trace_{F} u = 0
 \;\right\}
 ,
 \\
 \mathcal P^{-}_{r}\Lambda^{k}(\calT,\calU)
 :=
 \left\{\; 
  u \in \mathcal P^{-}_{r}\Lambda^{k}(\calT)
  \suchthat* 
  \forall F \in \calU : \trace_{F} u = 0
 \;\right\}
 .
\end{gather*}
In the case where $\calU = \emptyset$, we have $\mathcal P\Lambda^{k}(\calT,\calU) = \mathcal P\Lambda^{k}(\calT)$.

\subsection{Geometric decompositions and degrees of freedom} \label{subsec:geodecomp}

For each $F \in \calT$ we have the \emph{(global) trace operators}
\begin{align*} 
 \Trace_{F} : \mathcal P_{r}^{ }\Lambda^{k}(\calT) \rightarrow \mathcal P_{r}    \Lambda^{k}(F),
 \quad 
 \Trace_{F} : \mathcal P^{-}_{r}\Lambda^{k}(\calT) \rightarrow \mathcal P^{-}_{r}\Lambda^{k}(F).
\end{align*}
We can assume without loss of generality 
that for each $F \in \calT$ we have the \emph{extension operators}
\begin{align*} 
 \Ext_{F,\calT}^{r,k  } : \mathcal P_{r}\Lambda^{k}(F)     \rightarrow \mathcal P_{r}^{ }\Lambda^{k}(\calT),
 \quad 
 \Ext_{F,\calT}^{r,k,-} : \mathcal P^{-}_{r}\Lambda^{k}(F) \rightarrow \mathcal P^{-}_{r}\Lambda^{k}(\calT) 
\end{align*}
satisfying the following two properties.
On the one hand, 
they are right-inverses of the traces, 
\begin{align*}
  \Trace_{F} \Ext_{F,\calT}^{r,k} = \Id, \quad \Trace_{F} \Ext_{F,\calT}^{r,k,-} = \Id.
\end{align*}
On the other hand, they are localized in the sense that 
for all $S \in \calT$ with $F \nsubseteq S$ we have  
\begin{align*}
  \Trace_{S} \Ext_{F,\calT}^{r,k} \mathring{\mathcal P}_{r}\Lambda^{k}(F) = 0, 
  \quad 
  \Trace_{S} \Ext_{F,\calT}^{r,k,-} \mathring{\mathcal P}^{-}_{r}\Lambda^{k}(F) = 0
  .
\end{align*}
Extension operators like these are discussed in the literature \cite{AFW1,afwgeodecomp,christiansen2008construction,licht2022basis}.

The finite element spaces decompose into direct sums 
\begin{align*} 
    \mathcal P_{r}\Lambda^{k}(\calT,\calU)
    =
    \bigoplus_{ \substack{ F \in \calT \\ F \notin \calU } }
    \Ext_{F,\calT}^{r,k} \mathring{\mathcal P}_{r}\Lambda^{k}(F)
    ,
    \quad
    \mathcal P^{-}_{r}\Lambda^{k}(\calT,\calU)
    =
    \bigoplus_{ \substack{ F \in \calT \\ F \notin \calU } }
    \Ext_{F,\calT}^{r,k,-} \mathring{\mathcal P}^{-}_{r}\Lambda^{k}(F)
    . 
\end{align*}
The geometric decomposition of finite element spaces, albeit abstract, corresponds the common notion that the shape functions of finite element spaces can be associated with specific simplices of the triangulation and are localized around those simplices.

We consider the dual space of the finite element space,
commonly known as \emph{degrees of freedom},
since we need functionals in that space to define interpolation operators. 
When $F \in \calT$ and $m = \dim(F)$, then we define 
\begin{align*} 
 \mathcal C_r\Lambda^k(F)
 &:=
 \left\{
  \omega \mapsto \int_F \eta \wedge \Trace_{F} \omega
  \suchthat* 
  \eta \in \mathcal P_{r+k-m}^{-}\Lambda^{m-k}(F)
 \right\},
 \\ 
 \mathcal C_r^{-}\Lambda^k(F)
 &:=
 \left\{
  \omega \mapsto \int_F \eta \wedge \Trace_{F} \omega
  \suchthat* 
  \eta \in \mathcal P_{r+k-m-1}\Lambda^{m-k}(F)
 \right\}.
\end{align*}
One can show \cite{AFW1,licht2017priori} that these spaces of functionals 
span the dual spaces of the respective finite element spaces:
\begin{align*} 
    \mathcal P_{r}\Lambda^{k}(\calT,\calU)^{\ast}
    =
    \bigoplus_{ \substack{ F \in \calT \\ F \notin \calU } }
    \mathcal C_{r}\Lambda^{k}(F)
    ,
    \quad
    \mathcal P^{-}_{r}\Lambda^{k}(\calT,\calU)^{\ast}
    =
    \bigoplus_{ \substack{ F \in \calT \\ F \notin \calU } }
    \mathcal C^{-}_{r}\Lambda^{k}(F)
    . 
\end{align*}
Analogous to the geometric decomposition of the finite element spaces,
we have geometric decompositions of the degrees of freedom:
we work with functionals over finite element spaces that can be associated with simplices of the triangulation.

\section{Rough degrees of freedom} \label{sec:roughdof}

For the discussion of our projection operator 
we need to establish degrees of freedom with particular properties.
In doing so, we also establish several notational conventions 
to be used in subsequent sections. 
We begin by fixing the geometric setting.

\begin{convention} \label{convention:geometry}
    For the remainder of this article 
    we let $\calT$ be an $n$-dimensional simplicial complex,
    and we let $\calU \subseteq \calT$ be a simplicial subcomplex. 
    We assume that $\calT$ triangulates a domain $\Omega \subseteq \mathbb R^{n}$
    and that $\calU$ triangulates a relatively open part of the domain boundary $\Gamma \subseteq \partial\Omega$. 
\end{convention}

We will construct the projection operator for both families of finite element differential forms,
and the constructions will be completely analogous. 
As to avoid duplication when there are only minor technical differences, 
we establish the following convention. 

\begin{convention} \label{convention:twofamilies}
We let $p \in [1,\infty]$, $k \in \mathbb N_{0}$, and $r \in \mathbb N$, 
    and we fix a family of finite element spaces of differential forms. 
    Depending on the choice of finite element family, 
    we make one of the following two choices of notation:
    \begin{gather} \tag{A}
        \begin{cases}
            \qquad
            \mathcal P\Lambda^{k}(\calT) := \mathcal P_{r}\Lambda^{k}(\calT),
            \quad 
            \mathcal P\Lambda^{k}(\calT,\calU) := \mathcal P_{r}\Lambda^{k}(\calT,\calU), &
            \\
            \qquad
            \text{and for all $S \in \calT$:}&
            \\
            \qquad
            \mathcal P\Lambda^{k}(S) = \mathcal P_{r}\Lambda^{k}(S), 
            \; 
            \mathring{\mathcal P}\Lambda^{k}(S) = \mathring{\mathcal P}_{r}\Lambda^{k}(S),
            \; 
            \mathcal C\Lambda^{k}(S) = \mathcal C_{r}\Lambda^{k}(S), &
        \end{cases}
    \end{gather}
    or 
    \begin{gather} \tag{B}
        \begin{cases}
            \qquad
            \mathcal P\Lambda^{k}(\calT) := \mathcal P^{-}_{r}\Lambda^{k}(\calT),
            \quad 
            \mathcal P\Lambda^{k}(\calT,\calU) := \mathcal P^{-}_{r}\Lambda^{k}(\calT,\calU), &
            \\
            \qquad
            \text{and for all $S \in \calT$:}&
            \\
            \qquad
            \mathcal P\Lambda^{k}(S) = \mathcal P^{-}_{r}\Lambda^{k}(S), 
            \; 
            \mathring{\mathcal P}\Lambda^{k}(S) = \mathring{\mathcal P}^{-}_{r}\Lambda^{k}(S),
            \; 
            \mathcal C\Lambda^{k}(S) = \mathcal C^{-}_{r}\Lambda^{k}(S). &
        \end{cases}
    \end{gather}
    For every $S \in \calT$ we fix an index set $I(S) := \{ 1, \dots, \dim\mathring{\mathcal P}\Lambda^{k}(S)\}$.

    We assume\footnote{Recall that $\mathcal P_{r}\Lambda^{0} = \mathcal P^{-}_{r}\Lambda^{0}$ and $\mathcal P_{r-1}\Lambda^{n} = \mathcal P^{-}_{r}\Lambda^{n}$.} that the first option holds if $k=0$ and that the second option holds if $k=n$.
\end{convention}

We can assume that the specific shape functions and the degrees of freedom 
constitute a biorthogonal system. In fact, this is a common assumption in finite element methods 
and can easily be implemented in algorithms \cite{ern2021finite}.
The following theorem (see~\cite[Theorem~5.2, Theorem~5.4]{licht2021}) formalizes the idea and also establishes some important inverse estimates.

\begin{theorem} \label{theorem:biorthogonal}
    There exist bases
    $\left\{ \phi_{S,i}^{\ast} \right\}_{i \in I(S)}$ of $\mathcal C\Lambda^{k}(S)$ for each $S$,
    and a basis
    $\left\{ \phi_{S,i} \right\}_{S \in \calT, i \in I(S)}$ of $\mathcal P\Lambda^{k}(\calT)$
    such that the following conditions are satisfied:
    \begin{enumerate}
     \item $\phi_{S,i}^{\ast}( \phi_{S',j} ) = 1$ if $S = S'$ and $i = j$ and is zero otherwise.
     \item $\Trace_{S'} \phi_{S,i} = 0$ for any $S, S' \in \calT$ where $S \nsubseteq S'$.
     \item The set $\{ \phi_{S,i} \}_{ S \in \calT \setminus \calU, i \in I(S) }$ is a basis of $\mathcal P\Lambda^{k}(\calT,\calU)$.
    \end{enumerate}
    There exists $C_{\rm{A},s} > 0$, depending on $p$, $n$, the polynomial degree $r$, $s$, and $\mu(\calT)$,
    such that for all $T \in \calT$ and $S \in \Delta(T)$:
    \begin{subequations}
    \begin{gather} 
        | \phi_{S,i} |_{W^{s,p}\Lambda^{k}(T)} 
        \leq C_{\rm{A},s} h_{S}^{ \frac{n}{p} - k - s },  
        \label{theorem:biorthogonal:boundphi:primal}
        \\
        | \phi^{\ast}_{S,i}( \omega ) |
        \leq 
        C_{\rm{A},s} 
        h_{S}^{ k - \frac{n}{p} + s }
        \| \omega \|_{W^{s,p}\Lambda^{k}(T)},
        \quad 
        \omega \in \mathcal P_{r}\Lambda^{k}(T)
        .
        \label{theorem:biorthogonal:boundphi:dual}
    \end{gather}
    \end{subequations}
\end{theorem}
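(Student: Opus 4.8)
The plan is to reduce everything to the reference configuration and then propagate back by an affine scaling argument, which is the standard technique for this kind of statement. First I would address existence of the biorthogonal system. For each $S \in \calT$ we have the finite-dimensional space $\mathring{\mathcal P}\Lambda^{k}(S)$ together with its dual $\mathcal C\Lambda^{k}(S)$, and the canonical pairing between them via $\eta \mapsto \int_{S}\eta\wedge\Trace_{S}\omega$ is non-degenerate (this is the unisolvence statement referenced after the definition of the $\mathcal C$-spaces). Hence we may pick, on each $S$, a basis $\{\mathring\phi_{S,i}\}_{i\in I(S)}$ of $\mathring{\mathcal P}\Lambda^{k}(S)$ and a dual basis $\{\phi_{S,i}^{\ast}\}_{i\in I(S)}$ of $\mathcal C\Lambda^{k}(S)$ with $\phi^{\ast}_{S,i}(\mathring\phi_{S,j}) = \delta_{ij}$. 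We then set $\phi_{S,i} := \Ext_{S,\calT}^{r,k}\mathring\phi_{S,i}$ (respectively the trimmed extension in case (B)). Property~(2) is immediate from the localization property of the extension operators recalled in Section~\ref{subsec:geodecomp}; property~(1) follows because $\phi^{\ast}_{S,i}$ only involves $\Trace_{S}$, and $\Trace_{S}\phi_{S',j}$ vanishes unless $S\subseteq S'$ while it equals $\mathring\phi_{S',j}\in\mathring{\mathcal P}\Lambda^{k}(S')$, so the pairing is zero unless $S=S'$ (using that a functional in $\mathcal C\Lambda^{k}(S)$ annihilates forms whose trace onto $S$ is a trace of a higher simplex — more precisely, one checks directly that for $S\subsetneq S'$ the restriction to $S$ of $\mathring\phi_{S',j}$ is zero by the boundary conditions built into $\mathring{\mathcal P}\Lambda^{k}(S')$). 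Property~(3) is then the geometric decomposition $\mathcal P\Lambda^{k}(\calT,\calU) = \bigoplus_{F\notin\calU}\Ext_{F,\calT}\mathring{\mathcal P}\Lambda^{k}(F)$ restated in terms of the chosen bases.

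Next I would establish the inverse estimates~\eqref{theorem:biorthogonal:boundphi:primal} and~\eqref{theorem:biorthogonal:boundphi:dual}. On the reference simplex $\hat T$ and each of its subsimplices $\hat S$, the space $\mathring{\mathcal P}\Lambda^{k}(\hat S)$ and the extension maps are fixed finite-dimensional objects, so the reference shape functions $\hat\phi_{\hat S,i}$ satisfy $|\hat\phi_{\hat S,i}|_{W^{s,p}\Lambda^{k}(\hat T)} \le \hat C$ and the reference functionals satisfy $|\hat\phi^{\ast}_{\hat S,i}(\hat\omega)| \le \hat C\,\|\hat\omega\|_{W^{s,p}\Lambda^{k}(\hat T)}$ for all $\hat\omega\in\mathcal P_{r}\Lambda^{k}(\hat T)$, simply by equivalence of norms on finite-dimensional spaces; here $\hat C$ depends only on $n$, $r$, $k$, $p$, $s$. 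The point is that there are only finitely many combinatorial types of pairs $(\hat T,\hat S)$, so a single constant works. Then, given $T\in\calT$ and $S\in\Delta(T)$, let $\Phi_{T}\colon\hat T\to T$ be an affine diffeomorphism carrying $\hat S$ to $S$; pulling back $k$-forms under $\Phi_{T}$ scales the $L^{p}$ norm by a factor of order $h_{T}^{-n/p}h_{T}^{k}$ and each additional order of differentiation contributes $h_{T}^{-1}$ (with constants depending on $\mu(T)$, and using $h_{S}\simeq h_{T}$ via $C_{\rm Q}$ for adjacent simplices — although here $S\subseteq T$ directly so the scaling is governed by $h_{T}$, and one replaces $h_{T}$ by $h_{S}$ at the cost of a $\mu(\calT)$-dependent factor). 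Tracking these factors through the reference estimates yields~\eqref{theorem:biorthogonal:boundphi:primal}; tracking them through the dual estimate — where one must also change variables in the integral defining $\phi^{\ast}_{S,i}$ and in the Sobolev norm on the right — yields~\eqref{theorem:biorthogonal:boundphi:dual}.

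The main obstacle, I expect, is bookkeeping the scaling exponents cleanly, especially for the dual functionals: the functional $\phi^{\ast}_{S,i}$ is an integral over the $m$-dimensional simplex $S$ (with $m = \dim S$, not $n$) of $\eta\wedge\Trace_{S}\omega$, so under the affine map the volume element scales like $h_{S}^{m}$, the test form $\eta\in\mathcal P^{-}\Lambda^{m-k}(S)$ contributes its own scaling, and one has to verify that the powers of $h_{S}$ combine to exactly $h_{S}^{k-n/p+s}$ independently of $m$. This works precisely because the definition of $\mathcal C\Lambda^{k}(S)$ uses the complementary-degree test forms, which is the algebraic reason the exponents conspire correctly; one should also invoke the trace lemma, Lemma~\ref{lemma:tracelemma}, to control $\Trace_{S}$ of a general $\omega\in\mathcal P_{r}\Lambda^{k}(T)$ in terms of $\|\omega\|_{W^{s,p}\Lambda^{k}(T)}$ when $S$ is a proper subsimplex, handling the intermediate traces through a chain of facets. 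A secondary subtlety is that the estimate~\eqref{theorem:biorthogonal:boundphi:dual} is asserted for \emph{all} $\omega\in\mathcal P_{r}\Lambda^{k}(T)$, not only for $\omega$ in the finite element space, so one genuinely needs the inverse-inequality/trace-bound route rather than a duality-with-the-basis argument.
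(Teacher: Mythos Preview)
The paper does not prove this theorem; it is stated with a citation to \cite[Theorem~5.2, Theorem~5.4]{licht2021}. Your plan is the standard route taken in that reference: build the primal basis from local bases of $\mathring{\mathcal P}\Lambda^{k}(S)$ via the extension operators, pair with dual bases of $\mathcal C\Lambda^{k}(S)$, and obtain the scaling estimates by pulling back to a reference simplex and tracking powers of $h$. The scaling argument for \eqref{theorem:biorthogonal:boundphi:primal} and \eqref{theorem:biorthogonal:boundphi:dual} is correctly outlined.

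There is, however, a genuine gap in your verification of property~(1). You have the inclusion direction reversed: localization gives $\Trace_{S}\phi_{S',j}=0$ when $S'\nsubseteq S$, not when $S\nsubseteq S'$. Once corrected, the case that remains is $S'\subsetneq S$, and this is precisely the case you do \emph{not} handle. Your parenthetical argument (``the restriction to $S$ of $\mathring\phi_{S',j}$ is zero by the boundary conditions built into $\mathring{\mathcal P}\Lambda^{k}(S')$'') addresses $S\subsetneq S'$, which is already covered by localization. When $S'\subsetneq S$, the trace $\Trace_{S}\phi_{S',j}$ is generally nonzero, and the vanishing of $\phi^{\ast}_{S,i}$ on it does \emph{not} follow from the two abstract properties of the extension operators stated in Section~\ref{subsec:geodecomp}. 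One needs either to invoke the full FEEC duality theorem (that $\mathcal C\Lambda^{k}(S)$ annihilates the traces to $S$ of extensions from proper subsimplices, which is equivalent to the direct-sum decomposition of the dual space being \emph{dual} to the primal geometric decomposition), or, equivalently, to choose the extensions as the canonical ones defined by setting all degrees of freedom off $S$ to zero. Either way this is the substantive FEEC input, and your argument should name it rather than claim it follows from boundary conditions on $\mathring{\mathcal P}\Lambda^{k}(S')$.
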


The degrees of freedom are initially defined via integrals over lower-dimensional subsimplices.
Thus they are initially defined only for differential forms whose coefficients are sufficiently smooth,
since only those have the necessary traces well-defined.
But we can extend the degrees of freedom to much rougher spaces of differential forms 
via an integration by parts formulas \cite[Theorem~7.1]{licht2021}. 
This observation is critical for our endeavor in this article.

\begin{theorem} \label{theorem:roughdof}
    For every $S,F \in \calT$ with $\dim(F) = n - 1$ and $S \subseteq F$
    and every $i \in I(S)$ 
    there exists 
    $\Xi_{T,F,S,i} \in C^{\infty}\Lambda^{n-k-1}(T)$
    whose support has positive distance from all facets of $T$ except $F$
    and which satisfies 
    \begin{align*}
        \phi_{S,i}^{\ast}( \omega ) 
        = 
        o(F,T) 
        \int_{T} \exteriorderivative\Xi_{T,F,S,i} \wedge \omega + (-1)^{n-k-1} \Xi_{T,F,S,i} \wedge \exteriorderivative\omega,
        \quad 
        \omega \in \mathcal P_{r}\Lambda^{k}(T)
        .
    \end{align*}
    Moreover, we have 
    \begin{align*}
        \phi_{S,i}^{\ast}( \omega ) 
        = 
        o(F,T) 
        \int_{F} \trace_{T,F} \Xi_{T,F,S,i} \wedge \trace_{T,F} \omega,
        \quad 
        \omega \in \mathcal P_{r}\Lambda^{k}(T)
        .
    \end{align*}
    There exists $C_{\Xi} > 0$, depending only on $r$, $n$, $q \in [1,\infty]$, and $\mu(T)$, 
    such that 
    \begin{subequations}
    \begin{align} 
        \| \Xi_{T,F,S,i} \|_{L^{q}\Lambda^{n-k-1}(T)} 
        &\leq 
        C_{\Xi} 
        h_{S}^{ \frac{n}{q} - n+k+1}, \label{theorem:roughdof:scaling:xi}
        \\   
        \| \exteriorderivative\Xi_{T,F,S,i} \|_{L^{q}\Lambda^{n-k}(T)} 
        &\leq 
        C_{\Xi} 
        h_{S}^{ \frac{n}{q} - n+k}, \label{theorem:roughdof:scaling:exteriorderivativexi}
        \\ 
        \| \trace_{T,F} \Xi_{T,F,S,i} \|_{L^{q}\Lambda^{n-k-1}(F)} 
        &\leq 
        C_{\Xi} 
        h_{S}^{ \frac{n-1}{q} - n+k+1}.\label{theorem:roughdof:scaling:trace}
    \end{align}
    \end{subequations}
\end{theorem}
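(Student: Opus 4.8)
The plan is to construct each $\Xi_{T,F,S,i}$ first on a fixed reference configuration, where the relevant spaces are finite-dimensional and the construction is explicit, and then to transport it to an arbitrary simplex by an affine change of variables; the scaling bounds \eqref{theorem:roughdof:scaling:xi}--\eqref{theorem:roughdof:scaling:trace} will follow from the transformation rule for differential forms together with shape regularity. The first step is to factor the degree of freedom through the facet. Fix $T$, a facet $F \in \Delta_{n-1}(T)$ with $S \subseteq F$, and $i \in I(S)$. Since $\phi_{S,i}^{\ast} \in \mathcal C\Lambda^{k}(S)$, there is a polynomial form $\eta_{S,i}$ on $S$ with $\phi_{S,i}^{\ast}(\omega) = \int_{S}\eta_{S,i}\wedge\trace_{T,S}\omega$ for all $\omega \in \mathcal P\Lambda^{k}(T)$; as traces are transitive, $\trace_{T,S} = \trace_{F,S}\circ\trace_{T,F}$, and hence $\phi_{S,i}^{\ast}(\omega) = L(\trace_{T,F}\omega)$ with $L(\psi) := \int_{S}\eta_{S,i}\wedge\trace_{F,S}\psi$ a linear functional on $\mathcal P\Lambda^{k}(F)$. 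It therefore suffices to produce $\Xi_{T,F,S,i} \in C^{\infty}\Lambda^{n-k-1}(T)$, supported at positive distance from every facet of $T$ other than $F$, whose trace $\trace_{T,F}\Xi_{T,F,S,i}$ is a fixed sign multiple of a form $\theta \in C^{\infty}\Lambda^{n-k-1}(F)$ representing $L$, i.e.\ with $\int_{F}\theta\wedge\psi = L(\psi)$ for all $\psi \in \mathcal P\Lambda^{k}(F)$. Indeed, applying Stokes' theorem to the $(n-1)$-form $\Xi_{T,F,S,i}\wedge\omega$ on $T$, whose trace vanishes on all facets of $T$ except $F$, reduces $\int_{T}\exteriorderivative(\Xi_{T,F,S,i}\wedge\omega)$ to $o(F,T)\int_{F}\trace_{T,F}\Xi_{T,F,S,i}\wedge\trace_{T,F}\omega$, and then both identities of the theorem follow from the Leibniz rule $\exteriorderivative(\Xi_{T,F,S,i}\wedge\omega) = \exteriorderivative\Xi_{T,F,S,i}\wedge\omega + (-1)^{n-k-1}\Xi_{T,F,S,i}\wedge\exteriorderivative\omega$, keeping track of the orientation signs.

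For the reference construction, note that up to relabeling vertices there are only finitely many combinatorial types of inclusions $S \subseteq F$ inside an $n$-simplex, so it suffices to treat a reference configuration $\widehat S \subseteq \widehat F \subseteq \widehat T$. A polynomial differential form on $\widehat F$ vanishing on a nonempty relatively open subset vanishes identically, so, fixing a ball $B$ centered at the barycenter of $\widehat F$ whose closure lies in the relative interior of $\widehat F$, the pairing $(\alpha,\psi)\mapsto\int_{\widehat F}\alpha\wedge\psi$ between $\alpha \in C^{\infty}\Lambda^{n-k-1}(\widehat F)$ with $\supp\alpha \subseteq B$ and $\psi \in \mathcal P\Lambda^{k}(\widehat F)$ is non-degenerate in $\psi$. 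Hence every functional on $\mathcal P\Lambda^{k}(\widehat F)$ is represented by a smooth form supported in $B$, and this assignment can be taken linear and bounded. The barycenter of $\widehat F$ has a fixed positive distance from all facets of $\widehat T$ other than $\widehat F$, so for $B$ small enough we may extend a representing form $\widehat\theta$ off $\widehat F$ into $\widehat T$ --- for instance by cutting off the pullback of $\widehat\theta$ along an affine retraction onto $\widehat F$ by a bump supported in a thin neighborhood of $B$ --- to obtain $\widehat\Xi \in C^{\infty}\Lambda^{n-k-1}(\widehat T)$ with $\trace_{\widehat T,\widehat F}\widehat\Xi = \widehat\theta$ and $\supp\widehat\Xi$ at positive distance from all facets of $\widehat T$ except $\widehat F$, again depending linearly and boundedly on $\widehat\theta$.

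To transport, let $\Phi\colon\widehat T\to T$ be the affine bijection sending $\widehat F$ to $F$ and $\widehat S$ to $S$ compatibly with the vertex labels, let $\widehat\Xi$ be the output of the reference construction applied to the pullback of $L$ onto $\mathcal P\Lambda^{k}(\widehat F)$, and set $\Xi_{T,F,S,i} := (\Phi^{-1})^{\ast}\widehat\Xi$. Then $\Xi_{T,F,S,i}$ has the required support, since affine maps preserve faces and relative distances, and the two identities hold because $\exteriorderivative$, $\wedge$, $\trace$, and Stokes' theorem all commute with pullback. For the bounds, \eqref{theorem:biorthogonal:boundphi:dual} shows that the pulled-back functional has norm controlled by $\mu(T)$ on the reference simplex, so the reference norms of $\widehat\Xi$, $\exteriorderivative\widehat\Xi$, and $\trace_{\widehat T,\widehat F}\widehat\Xi$ are bounded in terms of $r$, $n$, $q$, and $\mu(T)$; the affine change of variables scales the $L^{q}$-norm of a $j$-form by a factor $\sim |\det D\Phi|^{1/q}\,\|D\Phi\|^{-j}$, and since $|\det D\Phi|\sim\vol^{n}(T)\sim h_{T}^{n}$, $\|D\Phi\|\sim h_{T}$, and $h_{S}\sim h_{F}\sim h_{T}$ up to constants depending on $\mu(T)$, taking $j=n-k-1$ yields \eqref{theorem:roughdof:scaling:xi}, $j=n-k$ yields \eqref{theorem:roughdof:scaling:exteriorderivativexi}, and $j=n-k-1$ with $n$ replaced by $n-1$ over $F$ yields \eqref{theorem:roughdof:scaling:trace}.

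The main difficulty lies in the reference step: representing $\widehat L$ --- which a priori only sees the trace onto the possibly lower-dimensional face $\widehat S$ --- by a \emph{smooth} form supported in a fixed interior ball of $\widehat F$, and then extending this form into $\widehat T$ without enlarging its support onto the remaining facets while keeping its trace on $\widehat F$ exactly equal to $\widehat\theta$. Both become routine once one uses that polynomial forms are determined by their restriction to any relatively open set, which is precisely what permits concentrating the representative near the barycenter of $\widehat F$ rather than near $\widehat S$; everything afterwards is affine bookkeeping together with the change-of-variables formula for norms of differential forms.
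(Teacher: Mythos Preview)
The paper does not prove this theorem; it is quoted from \cite[Theorem~7.1]{licht2021} and stated here without argument. Your proposal supplies a complete and correct proof along the natural lines: factor the functional $\phi_{S,i}^{\ast}$ through the trace onto $F$, represent the resulting functional on the finite-dimensional space $\mathcal P\Lambda^{k}(F)$ by a smooth $(n{-}k{-}1)$-form compactly supported in the relative interior of $F$, extend that form into $T$ without touching the remaining facets, and invoke Stokes' theorem; the scaling estimates then drop out of an affine reference argument.

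Two small remarks. First, your sentence ``this assignment can be taken linear and bounded'' deserves one more word of justification: once you fix a basis $\{\psi_j\}$ of $\mathcal P\Lambda^{k}(\widehat F)$ and smooth forms $\{\alpha_j\}$ supported in $B$ with $\int_{\widehat F}\alpha_j\wedge\psi_l=\delta_{jl}$ (which exist by the non-degeneracy you proved), the map $L\mapsto\sum_j L(\psi_j)\alpha_j$ is the desired bounded linear right inverse, and all reference constants depend only on $r$, $n$, $q$. Second, your phrase ``a fixed sign multiple'' is prudent: the two displayed identities in the statement, taken literally together with Stokes' theorem $\int_T\exteriorderivative(\Xi\wedge\omega)=o(F,T)\int_F\trace_{T,F}(\Xi\wedge\omega)$, force $o(F,T)=1$, so one of the two occurrences of $o(F,T)$ in the statement is evidently a misprint. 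Your construction is unaffected --- one simply absorbs the sign into the definition of $\Xi_{T,F,S,i}$ on the side where it is needed --- but it is worth flagging rather than glossing over.
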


We can represent degrees of freedom via integrals over facets or volumes. 
It will be helpful to fix a particular such pair of facet and volume 
for each degree of freedom as ``representatives''. 
Most importantly, our discussion of boundary conditions will need that 
degrees of freedom associated to simplices in $\calU$
have a representative facet within $\calU$ again. 

\begin{convention} \label{sec:dofrepresentative}
    For any simplex $S \in \calT$ of dimension at most $n-1$ 
    we fix $F_{S} \in \Delta_{n-1}(\calT)$ and $T_{S} \in \Delta_{n}(\calT)$
    with $S \subseteq F_{S} \subseteq T_{S}$.
    If $S \in \calU$, then we require $F_{S} \in \calU$. 
    We also introduce the abbreviations 
    \begin{align} \label{math:festsetzen}
        \Xi_{S,i} := \Xi_{T_{S},F_{S},S,i}  \in C^{\infty}\Lambda^{n-k-1}(T_{S}).
    \end{align}
\end{convention}

\section{Averaging-based projection} \label{sec:mainresult}

This section develops the main result: an averaging-based finite element projection. 
We follow ideas discussed by Ern and Guermond \cite{ern2017finite}, building upon earlier work by Oswald \cite{oswald1993bpx}.
Our projection is composed of two components:
first, a cellwise interpolator onto a piecewise polynomial non-conforming (i.e., broken) finite element spaces without any continuity conditions,
and second, weighted averaging of the degrees of freedom to construct a conforming interpolant.

Since we need projection operators for each single cell for the first component,
we first establish the following proposition.

\begin{proposition} \label{prop:localprojection}
    Let $T \in \calT$. 
    There exist bounded projections 
    \begin{gather*}
        \Proj_{T} : 
        L^{p}\Lambda^{k}(\Omega)
        \rightarrow 
        \mathcal P\Lambda^{k}(T) 
        \subset 
        L^{p}\Lambda^{k}(T),
        \\ 
        \ProjD_{T} : 
        L^{p}\Lambda^{k+1}(\Omega)
        \rightarrow 
        \mathcal P\Lambda^{k+1}(T) 
        \subset 
        L^{p}\Lambda^{k+1}(T),
    \end{gather*}
    that satisfy the following inequalities.
    \begin{subequations}
    If $\calP\Lambda^{k}(T) = \calP_{r}\Lambda^{k}(T)$ 
    and $\omega \in W^{m,p}\Lambda^{k}(T)$
with $m \in [0,r+1]$ and $s \in [0,m]$,
then 
    \begin{gather} \label{math:polyapproximation:full}
        \left| \omega - \Proj_{T} \omega \right|_{W^{s}\Lambda^{k}(T)} 
        \leq 
        C_{\Pi} 
        h_{T}^{m-s}
        | \omega |_{W^{m,p}\Lambda^{k}(T)}
        .
    \end{gather}
    If $\calP\Lambda^{k}(T) = \calP^{-}_{r}\Lambda^{k}(T)$ 
    and $\omega \in W^{m,p}\Lambda^{k}(T)$
with $m \in [0,r]$ and $s \in [0,m]$,
then 
    \begin{gather} \label{math:polyapproximation:trimmed}
        \left| \omega - \Proj_{T} \omega \right|_{W^{s,p}\Lambda^{k}(T)} 
        \leq 
        C_{\Pi} 
        h_{T}^{m-s}
        | \omega |_{W^{m,p}\Lambda^{k}(T)}
        .
    \end{gather}
    If $\omega \in \calW^{p,p}\Lambda^{k}(\Omega)$ with $\exteriorderivative\omega \in W^{l,p}\Lambda^{k}(T)$
    for some $l \in [0,r]$ and $s \in [0,l]$, 
    then 
    \begin{align} \label{math:polyapproximation:commutativity}
        \exteriorderivative \Proj_{T} \omega = \ProjD_{T} \exteriorderivative \omega,
    \end{align}
    and we have 
    \begin{gather} \label{math:polyapproximation:exteriorderivative}
        \| \exteriorderivative\omega - \exteriorderivative \Proj_{T} \omega \|_{L^{p}\Lambda^{k+1}(T)} 
        \leq 
        C_{\Pi} 
        h_{T}^{l}
        | \exteriorderivative \omega |_{W^{l,p}\Lambda^{k+1}(T)}
        .
    \end{gather}
    \end{subequations}
    Here, $C_{\Pi} > 0$ depends only on $n$, $p$, the polynomial degree $r$, and $\mu(T)$.    
\end{proposition}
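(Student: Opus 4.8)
The plan is to build $\Proj_{T}$ and $\ProjD_{T}$ on a reference simplex, transport them to $T$ by affine pullback, and then deduce all four assertions from two facts alone: the operators are $L^{p}$-bounded projections that reproduce polynomials (this yields the Bramble--Hilbert estimates~\eqref{math:polyapproximation:full},~\eqref{math:polyapproximation:trimmed},~\eqref{math:polyapproximation:exteriorderivative}), and they form a cochain map for the polynomial de~Rham subcomplex $\mathcal P\Lambda^{k}(\cdot)\xrightarrow{\exteriorderivative}\mathcal P\Lambda^{k+1}(\cdot)$ attached to the chosen finite element family, where $\mathcal P\Lambda^{k+1}(T)$ denotes the companion space of $\mathcal P\Lambda^{k}(T)$ in that subcomplex and always contains $\mathcal P_{r-1}\Lambda^{k+1}(T)$ (this yields the commutation~\eqref{math:polyapproximation:commutativity}). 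If $\Phi\colon\hat T\to T$ is the affine isomorphism from the reference simplex, then $\Phi^{\ast}$ intertwines $\exteriorderivative$ and distorts each seminorm $|\cdot|_{W^{s,p}\Lambda^{j}}$ by a factor controlled only by $h_{T}$ and $\mu(T)$; hence it suffices to construct $\hat\Proj$ and $\hat\ProjD$ over $\hat T$ and to set $\Proj_{T}:=(\Phi^{\ast})^{-1}\hat\Proj\,\Phi^{\ast}$ precomposed with the restriction $L^{p}\Lambda^{k}(\Omega)\to L^{p}\Lambda^{k}(T)$, and likewise for $\ProjD_{T}$; the factors $h_{T}^{m-s}$ and $h_{T}^{l}$ and the $\mu(T)$-dependence of $C_{\Pi}$ then come out of this change of variables.

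The core step is the construction over $\hat T$ of a bounded cochain projection. I would take a mollification operator $R\colon L^{p}\Lambda^{\bullet}(\hat T)\to C^{\infty}\Lambda^{\bullet}(\overline{\hat T})$ obtained by averaging pullbacks under a smooth family of diffeomorphisms that map $\hat T$ strictly into its interior; since pullbacks commute with $\exteriorderivative$ and the averaging is linear, $R$ is a cochain map, and the family can be arranged so that each canonical degree of freedom, evaluated on $R\omega$, turns into a genuine volume integral of $\omega$ against a smooth form, whence $R$ is bounded on $L^{p}$ uniformly and its restriction to the polynomial spaces converges uniformly to the identity as the mollification width shrinks. Composing with the canonical finite element interpolant $I^{\mathrm{can}}$ onto $\mathcal P\Lambda^{\bullet}(\hat T)$ — itself a projection and a cochain map for the chosen subcomplex — gives $\Pi:=I^{\mathrm{can}}R$, an $L^{p}$-bounded cochain map whose restriction to the finite-dimensional polynomial spaces tends to the identity, so that for a width fixed small in terms of $n$, $p$, $r$ it is invertible there; I then set $\hat\Proj:=(\Pi|_{\mathcal P\Lambda^{k}(\hat T)})^{-1}\Pi$ and $\hat\ProjD:=(\Pi|_{\mathcal P\Lambda^{k+1}(\hat T)})^{-1}\Pi$, which are $L^{p}$-bounded projections. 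A short diagram chase, using that $\exteriorderivative$ maps $\mathcal P\Lambda^{k}(\hat T)$ into $\mathcal P\Lambda^{k+1}(\hat T)$ and that $\Pi$ commutes with $\exteriorderivative$ on $\mathcal W^{p,p}\Lambda^{k}(\hat T)$, then gives $\exteriorderivative\hat\Proj=\hat\ProjD\,\exteriorderivative$ on $\mathcal W^{p,p}\Lambda^{k}(\hat T)$, which is~\eqref{math:polyapproximation:commutativity} after transporting to $T$. I expect this mollification step — specifically, arranging it so that the trace-type degrees of freedom become $L^{p}$-continuous volume functionals while the cochain property is preserved — to be the only genuine obstacle; this is where techniques in the spirit of Ern--Guermond and of smoothed projections enter.

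Granting the projection, the estimates~\eqref{math:polyapproximation:full} and~\eqref{math:polyapproximation:trimmed} follow from the standard Bramble--Hilbert argument. For $\omega\in W^{m,p}\Lambda^{k}(T)$ pick the componentwise averaged Taylor polynomial form $q\in\mathcal P\Lambda^{k}(T)$ of coefficient degree $r$ in case~(A) and $r-1$ in case~(B), so that $\|\omega-q\|_{W^{j,p}\Lambda^{k}(T)}\le C h_{T}^{m-j}|\omega|_{W^{m,p}\Lambda^{k}(T)}$ for all $0\le j\le m$, which is valid precisely for $m\le r+1$ and $m\le r$, respectively. Since $\Proj_{T}q=q$, write $\omega-\Proj_{T}\omega=(\omega-q)-\Proj_{T}(\omega-q)$ and bound the first summand directly; for the second, use the inverse inequality for polynomials of degree $\le r$ followed by the $L^{p}$-boundedness of $\Proj_{T}$:
\begin{align*}
 |\Proj_{T}(\omega-q)|_{W^{s,p}\Lambda^{k}(T)}
 &\le C h_{T}^{-s}\|\Proj_{T}(\omega-q)\|_{L^{p}\Lambda^{k}(T)}
 \\
 &\le C h_{T}^{-s}\|\omega-q\|_{L^{p}\Lambda^{k}(T)}
 \le C h_{T}^{m-s}|\omega|_{W^{m,p}\Lambda^{k}(T)}.
\end{align*}
Finally, for~\eqref{math:polyapproximation:exteriorderivative} I would invoke~\eqref{math:polyapproximation:commutativity} to replace $\exteriorderivative\Proj_{T}\omega$ by $\ProjD_{T}\exteriorderivative\omega$ and then repeat the same Bramble--Hilbert and inverse-inequality estimate for $\ProjD_{T}$ applied to $\exteriorderivative\omega\in W^{l,p}\Lambda^{k+1}(T)$, using that $\ProjD_{T}$ reproduces $\mathcal P_{r-1}\Lambda^{k+1}(T)\subseteq\mathcal P\Lambda^{k+1}(T)$; this accounts for the admissible range $l\in[0,r]$ and completes the proof.
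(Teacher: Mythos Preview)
Your argument is correct, but it is considerably more elaborate than the paper's. The paper does not build a smoothed projection at all: for case~(A) it simply invokes the averaged Taylor polynomial projections of Dupont--Scott, which act componentwise and therefore commute with $\exteriorderivative$ automatically, giving $\Proj_{T}=\Proj_{T}^{k,r}$ and $\ProjD_{T}=\Proj_{T}^{k+1,r-1}$ with all the required estimates by direct citation. For case~(B) it composes this with a purely algebraic projection $I_{T}^{k,r+1}\colon\calP_{r+1}\Lambda^{k}(T)\to\calP^{-}_{r}\Lambda^{k}(T)$ from \cite{AFW1} that satisfies $\exteriorderivative I_{T}^{k,r+1}=\exteriorderivative$, and then appeals to the Bramble--Hilbert lemma on the reference element. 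Your route via mollification, canonical interpolation, and the Sch\"oberl inversion trick produces a genuine local bounded cochain projection and treats both finite element families uniformly, but the machinery is heavier than needed here: since the proposition only asks for a projection onto polynomials on a \emph{single} simplex, the Dupont--Scott operator already does the job, and the ``only genuine obstacle'' you anticipate (making trace-type degrees of freedom $L^{p}$-continuous) never actually arises. Your Bramble--Hilbert argument for \eqref{math:polyapproximation:full}--\eqref{math:polyapproximation:exteriorderivative} is essentially the same as what the paper's citations unpack to.
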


\begin{proof}
    We consider the case where $T$ is a reference simplex. 
    The general case then follows from reference transformations.
    
    Consider the case $\calP\Lambda^{k}(T) = \calP_{r}\Lambda^{k}(T)$.
    There exist bounded projections \cite{dupont1980polynomial}
    \begin{align*}
        \Proj_{T}^{k  ,r}   : L^{p}\Lambda^{k  }(T) \rightarrow \calP_{r  }\Lambda^{k  }(T),
    \end{align*}
    for all $0 \leq k \leq n$ and $r \geq 0$ 
    with the following property:
    whenever $s,m \in [0,\infty)$ with $s \leq m \leq r+1$
    and $\omega \in W^{m,p}\Lambda^{k}(T)$,
    then  
    \begin{gather*} 
        \left| \omega - \Proj_{T}^{k  ,r} \omega \right|_{W^{s}\Lambda^{k}(T)} 
        \leq 
        C
        h_{T}^{m-s}
        | \omega |_{W^{m,p}\Lambda^{k}(T)}
        ,
    \end{gather*}
    where $C_{} > 0$ depends only on $m$, $n$, and $r$.
    Moreover, $\exteriorderivative \Proj_{T}^{k,r} \omega = \Proj_{T}^{k+1,r-1} \exteriorderivative \omega$
    whenever $\omega \in \calW^{p,p}\Lambda^{k}(T)$, and $r \geq 1$.
    The operators $\Proj_{T} = \Proj_{T}^{k,r}$ and $\ProjD_{T} = \Proj_{T}^{k+1,r-1}$ satisfies
    \eqref{math:polyapproximation:full}, \eqref{math:polyapproximation:commutativity} and \eqref{math:polyapproximation:exteriorderivative}
    over the reference simplex $T$.
    
    Next, consider the case $\calP\Lambda^{k}(T) = \calP^{-}_{r}\Lambda^{k}(T)$.
    Let $0 \leq k \leq n$ and $r \geq 1$.
    There exists a linear projection \cite{AFW1}
    \begin{align*}
        I_{T}^{k  ,r+1}   : \calP_{r+1}\Lambda^{k  }(T) \rightarrow \calP^{-}_{r  }\Lambda^{k}(T)
\end{align*}
such that $\exteriorderivative I_{T}^{k,r+1} \omega = \exteriorderivative \omega$
    for all $\omega \in \calP_{r}\Lambda^{k}(T)$.
    We set $\Proj_{T} = I_{T}^{k  ,r+1} \Proj_{T}^{k,r+1}$.
    Thus \eqref{math:polyapproximation:commutativity},
    and \eqref{math:polyapproximation:trimmed} and \eqref{math:polyapproximation:exteriorderivative}
    in the case $\calP\Lambda^{k}(T) = \calP^{-}_{r}\Lambda^{k}(T)$
    are a consequence of the Bramble-Hilbert lemma.
\end{proof}

Our projection operators are actually a class of projection operators 
that involve an arbitrary parameter choice. 
This does not affect the relevant properties of the operator; 
instead, it should be thought of as a variability
that produces a few interesting examples. 
We have a look at such interpolation operators, 
before we address them in a general framework. 

A projection inspired by Ern and Guermond \cite{ern2017finite} is composed of two steps.
Firstly, we take the piecewise polynomial projections $\Proj_{T}$ of the original field $\omega$ over every single volume $T$.
That produces a discontinuous finite element approximation. 
Secondly, we compute a continuous finite element approximation via averaging the degrees of freedom of that interpolant
(with some modification at the boundary). 
The composition of both steps is the desired interpolation operator:
\begin{align*}
    \Projection^{EG}
    : 
    L^{p}\Lambda^{k}(\Omega)
    \rightarrow 
    \mathcal P\Lambda^{k}(\calT),
    \quad 
    \omega
    \mapsto 
    \sum_{ \substack{ S \in \calT \\ i \in I(S) \\ S \notin \calU } }
    \frac{1}{\card{\supersimplices_{n}(\calT,S)}}
    \sum_{ \substack{ T \in \supersimplices_{n}(\calT,S) } }
    \phi_{S,i}^{\ast}\left( \Proj_{T}\omega_{|T} \right)
    \phi_{S,i}
    .
\end{align*}
An alternative construction is inspired by the Cl\'ement interpolant and the Scott-Zhang interpolant.
For every degree of freedom associated to some simplex $S \in \calT$,
we evaluate it at the piecewise projections $\Proj_{T_S}$ over the fixed volume $T_{S}$ that contains $S$.
Thus we get the the projection 
\begin{align*}
    \Projection^{C}
    : 
    L^{p}\Lambda^{k}(\Omega)
    \rightarrow 
    \mathcal P\Lambda^{k}(\calT),
    \quad 
    \omega
    \mapsto 
    \sum_{ \substack{ S \in \calT \\ i \in I(S) \\ S \notin \calU } }
    \phi_{S,i}^{\ast}\left( \Proj_{T_{S}}\omega_{|T_{S}} \right)
    \phi_{S,i}
    .
\end{align*}

\begin{remark}
    The operator $\Projection^{EG}$ is basically the Ern-Guermond projection
    whereas the projection $\Projection^{C}$ seems to be new. 
    We point out how the latter resembles the Cl\'ement interpolant but differs in some crucial ways.
    
    The Cl\'ement interpolant \cite{clement1975approximation} is standard for the mathematical theory of finite element methods.
    It evaluates each degree of freedom on a patchwise projection, where the local patch contains the subsimplex associated to that degree of freedom. 
    The essential difference is that $\Projection^C$ evaluates each degree of freedom not over patchwise projections 
    but over elementwise projections.
    
    As we shall see, this ostensibly minor change preserves many important properties of the Cl\'ement interpolant, 
    but the resulting operator is a projection.
\end{remark}

\begin{remark}
    A comparison with the Scott-Zhang operator is insightful. 
    We have derived representations of the degrees of freedom (Proposition~\ref{theorem:roughdof})
    by which these functionals are not only defined over polynomials 
    but much rougher spaces. In the constructions above, 
    we apply them to cellwise polynomial projections of the original field. 
    If we instead apply them to the original field instead,
    leaving out the cellwise polynomial projection,
    then the definition of $\Projection^{C}$ changes into the definition of the Scott-Zhang operator.
\end{remark}

The two operators introduced above are examples of a general construction. 
We henceforth assume that for every $S \in \calT$ and every $T \in \supersimplices_{n}(\calT,S)$
we have a fixed non-negative weight $c(S,T)$ such that 
\begin{align*}
 1 = \sum_{ \substack{ T \in \supersimplices_{n}(\calT,S) } } c(S,T)
 .
\end{align*}
We introduce the operator 
\begin{align*}
    \Projection
    : 
    L^{p}\Lambda^{k}(\Omega)
    \rightarrow 
    \mathcal P\Lambda^{k}(\calT),
    \quad 
    \omega
    \mapsto 
    \sum_{ \substack{ S \in \calT \\ i \in I(S) \\ S \notin \calU } }
    \sum_{ \substack{ T \in \supersimplices_{n}(\calT,S) } }
    c(S,T)
    \phi_{S,i}^{\ast}\left( \Proj_{T}\omega_{|T} \right)
    \phi_{S,i}
    .
\end{align*}
In other words, we project the original differential form onto a space of piecewise polynomial differential forms,
without imposing any continuity assumptions, and then construct a conforming interpolation by taking weighted averages of degrees of freedom over the local projections. 

\begin{example}
 The exemplaric operators $\Projection^{EG}$ and $\Projection^{C}$ 
 are recovered with particular natural choices of coefficients. 
 On the one hand, the construction yields the Ern-Guermond interpolant $\Projection^{EG}$ if we choose the weights uniformly for each simplex $S$,
 that is,
 \begin{align} \label{math:weightproperty}
  c(S,T) = \frac{1}{\card{\supersimplices_{n}(\calT,S)}}
  .
 \end{align}
 On the other hand, we obtain the Cl\'ement-type interpolant if all the coefficients are zero 
 except $c(S,T_S) = 1$ for every $S \in \calT$.
\end{example}

Before we study the analytical properties of the interpolant in more detail, 
we first verify its most important algebraic property:
that it is, indeed, a projection. 

\begin{lemma} \label{lemma:projectionproperty}
    If $\omega \in \calP\Lambda^{k}(\calT,\calU)$,
    then $\Projection \omega = \omega$. 
\end{lemma}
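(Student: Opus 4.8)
The plan is to show that each degree-of-freedom evaluation appearing in the definition of $\Projection$ reproduces the correct coefficient of $\omega$ in the biorthogonal basis, so that $\Projection\omega$ and $\omega$ have the same expansion. First I would write $\omega \in \calP\Lambda^{k}(\calT,\calU)$ in the basis from Theorem~\ref{theorem:biorthogonal}, namely $\omega = \sum_{S' \in \calT \setminus \calU, j \in I(S')} a_{S',j}\, \phi_{S',j}$, where by biorthogonality $a_{S',j} = \phi_{S',j}^{\ast}(\omega)$. Here I use item (3) of Theorem~\ref{theorem:biorthogonal}, which says $\{\phi_{S,i}\}_{S \notin \calU}$ is a basis of $\calP\Lambda^{k}(\calT,\calU)$. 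The goal is then to prove that for every $S \notin \calU$ and $i \in I(S)$, the coefficient produced by $\Projection$, namely $\sum_{T \in \supersimplices_{n}(\calT,S)} c(S,T)\, \phi_{S,i}^{\ast}(\Proj_{T}\omega_{|T})$, equals $\phi_{S,i}^{\ast}(\omega)$.

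The key step is the observation that $\Proj_{T}$ fixes polynomial differential forms, i.e. $\Proj_{T}\eta = \eta$ for $\eta \in \calP\Lambda^{k}(T)$ (it is a projection onto $\calP\Lambda^{k}(T)$ by Proposition~\ref{prop:localprojection}). Since $\omega \in \calP\Lambda^{k}(\calT,\calU) \subseteq \calP\Lambda^{k}(\calT)$, its restriction $\omega_{|T}$ lies in $\calP\Lambda^{k}(T)$ for every $T \in \Delta_{n}(\calT)$, by definition of the finite element space. Hence $\Proj_{T}\omega_{|T} = \omega_{|T}$ for each $T \in \supersimplices_{n}(\calT,S)$. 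Next I would use that the functional $\phi_{S,i}^{\ast}$ depends on $\omega$ only through its trace to $S$: by construction $\phi_{S,i}^{\ast} \in \calC\Lambda^{k}(S)$ is an integral of $\Trace_{S}$ against a polynomial on $S$, so $\phi_{S,i}^{\ast}(\omega_{|T})$ — interpreting $\phi_{S,i}^{\ast}$ applied to the restriction via $\Trace_{S}$ on that cell — coincides with $\phi_{S,i}^{\ast}(\omega)$ for any $T$ containing $S$, because $\omega$ is single-valued (continuous in the appropriate trace sense as an element of $\mathcal W^{\infty,\infty}\Lambda^{k}(\Omega)$), so all cellwise traces to $S$ agree. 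Therefore each summand equals $c(S,T)\,\phi_{S,i}^{\ast}(\omega)$, and summing over $T$ with $\sum_{T} c(S,T) = 1$ gives exactly $\phi_{S,i}^{\ast}(\omega)$.

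Putting these together, $\Projection\omega = \sum_{S \notin \calU, i} \phi_{S,i}^{\ast}(\omega)\, \phi_{S,i} = \sum_{S \notin \calU, i} a_{S,i}\, \phi_{S,i} = \omega$. The main obstacle I anticipate is purely a bookkeeping one: making precise that the degree of freedom $\phi_{S,i}^{\ast}$, which is globally defined on $\calP\Lambda^{k}(\calT)$ via $\Trace_{S}$, agrees with the cellwise evaluation $\phi_{S,i}^{\ast}(\Proj_{T}\omega_{|T})$ on the piecewise-polynomial object — this rests on the compatibility of the global trace operators $\Trace_{F}$ with the local traces and on the fact that conforming elements are exactly those whose cellwise traces to shared subsimplices coincide; once that identification is granted, the argument is immediate. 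One should also note that the restriction $S \notin \calU$ in the sum over $S$ matches the basis in item (3), and the boundary-condition bookkeeping of Convention~\ref{sec:dofrepresentative} plays no role here since we are not invoking the rough representations.
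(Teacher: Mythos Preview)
Your proposal is correct and follows essentially the same approach as the paper's proof: both use that $\Proj_T$ fixes $\omega_{|T}$, that $\phi_{S,i}^{\ast}$ depends only on the trace to $S$ (so the cellwise evaluations all coincide by conformity of $\omega$), and that the weights sum to one, then conclude via biorthogonality. The only cosmetic difference is that the paper checks the identity by applying each $\phi_{S',j}^{\ast}$ to $\Projection\omega$, whereas you compare coefficients in the basis expansion directly; the underlying computation is identical.
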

\begin{proof}
    Since $\omega$ is piecewise polynomial, 
    \begin{align*}
        \Projection \omega
=
        \sum_{ \substack{ S \in \calT \\ i \in I(S) \\ S \notin \calU } }
        \sum_{ \substack{ T \in \supersimplices_{n}(\calT,S) } }
        c(S,T) \phi_{S,i}^{\ast}\left( \omega_{|T} \right)
        \phi_{S,i}
        .
    \end{align*}
    Next, for $S' \in \Delta(\calT) \setminus \calU$, and $j \in I(S')$, we notice that 
    \begin{align*}
        \phi_{S',i}\left( \Projection \omega \right)
        &= 
        \sum_{ \substack{ S \in \calT \\ i \in I(S) \\ S \notin \calU } }
        \sum_{ \substack{ T \in \supersimplices_{n}(\calT,S) } }
        c(S,T) \phi_{S,i}^{\ast}\left( \omega_{|T} \right)
        \phi_{S',j}\left( \phi_{S,i} \right)
        \\&
        = 
        \sum_{ \substack{ T' \in \supersimplices_{n}(\calT,S') } }
        c(S',T) \phi_{S',j}^{\ast}\left( \omega_{|T'} \right)
        \\&
        = 
        \sum_{ \substack{ T' \in \supersimplices_{n}(\calT,S') } }
        c(S',T) \phi_{S',j}^{\ast}\left( \omega_{|T} \right)
        = 
        \phi_{S',j}^{\ast}\left( \omega_{|T} \right)
        .
    \end{align*}
    Here, we have used the biorthogonality property in Proposition~\eqref{theorem:biorthogonal},
    the conformity of $\omega \in \calP_{}\Lambda^{k}(\calT,\calU)$,
    and the summation \eqref{math:weightproperty}.
    This shows that $\Projection$ is a projection.
\end{proof}

\begin{remark}
    We remark that the projection property is also satisfied by Scott-Zhang-type interpolants~\cite{scott1990finite}. 
    The same is true for the generalized Scott-Zhang interpolant for differential forms~\cite{licht2021}
    but it was not proven in that publication.
\end{remark}

\section{Approximation error estimates} \label{sec:estimates}

We address the analytical properties of the projection operator 
with a sequence of auxiliary results. 
Together, these will show the local stability of the projection in Lebesgue and Sobolev-Slobodeckij norms, 
and also establish numerous approximation estimates for various smoothness classes of differential forms.
We commence with the stability result. 

\begin{theorem}
Let $m, s \in [0,\infty)$. If $\calP\Lambda^{k}(\calT,\calU) = \calP_{r}\Lambda^{k}(\calT,\calU)$,
    suppose that $m \leq r+1$. 
    If $\calP\Lambda^{k}(\calT,\calU) = \calP^{-}_{r}\Lambda^{k}(\calT,\calU)$,
    suppose that $m \leq r$. 				
Then 
    \begin{align*}
        \left| \Projection \omega \right|_{W^{s,p}\Lambda^{k}(T)}
        &\leq 
        C
        h_{T}^{m-s}
        \sum_{ \substack{ T' \in \Delta_{n}(\calT) \\ T \cap T' \neq \emptyset } }
        \left| \omega \right|_{W^{m,p}\Lambda^{k}(T')}
        .
    \end{align*}
    Here, $C > 0$ depends only on $p$, $m$, $s$, $r$, $n$, and $\mu(T)$.
\end{theorem}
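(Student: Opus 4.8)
The plan is to run a two-stage argument: a crude (lower-order) stability bound for $\Projection$ driven entirely by the scaling estimates of Theorem~\ref{theorem:biorthogonal}, followed by a reproduction step based on Lemma~\ref{lemma:projectionproperty} that trades the crude $L^{p}$ right-hand side for the pure top-order seminorm $\sum_{T'}|\omega|_{W^{m,p}\Lambda^{k}(T')}$; the reproduction step is exactly what prevents a residual lower-order term. To set this up I first localize. By property~(2) of Theorem~\ref{theorem:biorthogonal}, $\Trace_{S'}\phi_{S,i}=0$ whenever $S\nsubseteq S'$, so on $T$ only the basis forms $\phi_{S,i}$ with $S\in\Delta(T)$ survive, and each coefficient $\sum_{T'\in\supersimplices_{n}(\calT,S)}c(S,T')\phi^{\ast}_{S,i}(\Proj_{T'}\omega)$ reads $\omega$ only on the $n$-cells containing $S\subseteq T$. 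Thus $\Projection\omega|_{T}$ depends only on $\omega$ over the patch $\calT_{T}:=\{T'\in\Delta_{n}(\calT):T\cap T'\neq\emptyset\}$, whose cardinality and diameters are controlled through $C_{\rm N}(\calT)$ and $C_{\rm Q}(\calT)$.

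For the crude stability bound I expand the definition of $\Projection$, apply the triangle inequality, and insert \eqref{theorem:biorthogonal:boundphi:primal} together with \eqref{theorem:biorthogonal:boundphi:dual} at smoothness index $0$; the product of the two bounds telescopes to a factor $h_{S}^{-s}$. Using the $L^{p}$-boundedness of each $\Proj_{T'}$ (Proposition~\ref{prop:localprojection} with $m=s=0$), the normalization $\sum_{T'}c(S,T')=1$, uniform comparability $h_{S}\approx h_{T}$, and finite overlap, I obtain for every $\eta\in L^{p}\Lambda^{k}(\Omega)$
\[
 \left|\Projection\eta\right|_{W^{s,p}\Lambda^{k}(T)}\leq C\,h_{T}^{-s}\sum_{T'\in\calT_{T}}\|\eta\|_{L^{p}\Lambda^{k}(T')}.
\]
This is the engine, but on its own it carries the unwanted lower-order weight $h_{T}^{-s}\|\cdot\|_{L^{p}}$.

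The decisive step is reproduction. I choose a polynomial form $\pi$ of degree $\lceil m\rceil-1$ (admissible since $\lceil m\rceil-1\le r$, using $m\le r+1$ in case (A) and $m\le r$ in case (B)) that (i) satisfies the homogeneous boundary conditions along $\Gamma$, so that $\pi\in\calP\Lambda^{k}(\calT,\calU)$, and (ii) obeys a cellwise Bramble--Hilbert estimate $\|\omega-\pi\|_{L^{p}\Lambda^{k}(T')}\le C\,h_{T}^{m}\,|\omega|_{W^{m,p}\Lambda^{k}(T')}$ over the patch. Lemma~\ref{lemma:projectionproperty} then gives $\Projection\pi=\pi$, hence $\Projection\omega-\pi=\Projection(\omega-\pi)$, and feeding $\eta=\omega-\pi$ into the crude bound yields $|\Projection(\omega-\pi)|_{W^{s,p}\Lambda^{k}(T)}\le C\,h_{T}^{m-s}\sum_{T'\in\calT_{T}}|\omega|_{W^{m,p}\Lambda^{k}(T')}$. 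Finally $|\Projection\omega|_{W^{s,p}\Lambda^{k}(T)}\le|\Projection(\omega-\pi)|_{W^{s,p}\Lambda^{k}(T)}+|\pi|_{W^{s,p}\Lambda^{k}(T)}$, and the extra term vanishes because $\pi$ has degree $\lceil m\rceil-1$, so its top-order $W^{s,p}$-seminorm over $T$ is zero as soon as $s>\lceil m\rceil-1$ (in particular whenever $s\ge m$). This cancellation is precisely what upgrades the crude bound to the stated pure-seminorm inequality, in contrast to a naive estimate that leaves behind an $h_{T}^{-s}\|\omega\|_{L^{p}}$ remainder.

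I expect the construction of $\pi$ to be the main obstacle, for two reasons. First, the right-hand side asks for the broken (cellwise) seminorm $\sum_{T'}|\omega|_{W^{m,p}\Lambda^{k}(T')}$ rather than the patch seminorm $|\omega|_{W^{m,p}\Lambda^{k}(\Omega_{T})}$ over the patch region $\Omega_{T}=\bigcup_{T'\in\calT_{T}}T'$; for fractional $m$ the latter is strictly larger because of the nonlocal Slobodeckij double integral across cell interfaces, so a single patch polynomial does not suffice and the coefficients must be localized cell by cell. The natural device is to rewrite each coefficient as $\sum_{T'\ni S}c(S,T')\,\phi^{\ast}_{S,i}(\Proj_{T'}\omega-\Proj_{T}\omega)$ via $\sum_{T'}c(S,T')=1$, and to bound the jumps $\Proj_{T'}\omega-\Proj_{T}\omega$ across the shared facet using the trace inequality of Lemma~\ref{lemma:tracelemma} (or the rough-functional representation of Theorem~\ref{theorem:roughdof}) together with the cellwise approximation bounds of Proposition~\ref{prop:localprojection}; each such jump is then charged to $|\omega|_{W^{m,p}\Lambda^{k}(T')}$ on a single cell. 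Second, near $\Gamma$ one must produce a boundary-condition--respecting approximant, which is where the hypothesis that $\omega$ satisfies the partial boundary conditions and the flatness of the faces of $\calU$ enter. The remaining work is bookkeeping with the shape-regularity constants $C_{\rm N}(\calT)$ and $C_{\rm Q}(\calT)$.
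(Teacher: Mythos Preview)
Your route is genuinely different from the paper's, and as written it has a gap. The paper uses no reproduction step at all: instead of inserting~\eqref{theorem:biorthogonal:boundphi:dual} at smoothness index~$0$, it inserts that same dual estimate at index~$m$. Since $\Proj_{T'}\omega$ is polynomial this gives directly
\[
  \bigl|\phi^{\ast}_{S,i}(\Proj_{T'}\omega)\bigr|
  \le C_{A,m}\,h_{T'}^{\,m+k-n/p}\,\bigl|\Proj_{T'}\omega\bigr|_{W^{m,p}\Lambda^{k}(T')},
\]
and Proposition~\ref{prop:localprojection} with $s=m$ bounds the last seminorm by $(1+C_{\Pi})\,|\omega|_{W^{m,p}\Lambda^{k}(T')}$. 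Multiplying by~\eqref{theorem:biorthogonal:boundphi:primal} produces the prefactor $h_{T}^{m-s}$ in one stroke; summation over $S$, $i$, $T'$ is handled by shape regularity. No patch polynomial $\pi$, no boundary-respecting approximant, and no facet-jump analysis enter; those devices belong to the later error lemmas (Lemma~\ref{lemma:standarderrorrepresentation} and what follows), not to this stability estimate.

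The gap in your argument is the leftover term $|\pi|_{W^{s,p}\Lambda^{k}(T)}$. You note correctly that it vanishes once $s>\lceil m\rceil-1$, but the statement allows any $s\ge 0$. For $s\le\lceil m\rceil-1$---in particular the basic case $s=0$---this term is of size $|\omega|_{W^{s,p}\Lambda^{k}(T)}$, which is \emph{not} controlled by $h_{T}^{m-s}\sum_{T'}|\omega|_{W^{m,p}\Lambda^{k}(T')}$; the jump machinery you sketch at the end does not convert the one into the other. What is missing is precisely the inverse estimate at level $m$ on the polynomial $\Proj_{T'}\omega$ that the paper invokes and that your crude $L^{p}$ step deliberately set aside. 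Your two further obstacles (a single $\pi$ that both lies in $\calP\Lambda^{k}(\calT,\calU)$ and realises the broken, cellwise right-hand side for fractional $m$) are real for your approach but simply do not arise in the paper's direct argument.
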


\begin{proof}
    By definitions, 
    \begin{align*}
        \left| \Projection \omega \right|_{W^{s,p}\Lambda^{k}(T)}
        &\leq 
        \sum_{ \substack{ S \in \Delta(T) \\ i \in I(S) \\ S \notin \calU \\ T' \in \supersimplices_{n}(\calT,S)} }
        c(S,T) \left| \phi_{S,i}^{\ast}\left( \Proj_{T'}\omega_{|T'} \right) \phi_{S,i} \right|_{W^{s,p}\Lambda^{k}(T)}
        .
    \end{align*}
    For any $T, T' \in \calT$ sharing a common simplex $S$ and $i \in I(S)$ we have 
    \begin{align*}
        \left| \phi_{S,i}^{\ast}\left( \Proj_{T'}\omega_{|T'} \right) \phi_{S,i} \right|_{W^{s,p}\Lambda^{k}(T)}
        \leq 
        \left| \phi_{S,i}^{\ast}\left( \Proj_{T'}\omega_{|T'} \right) \right|
        \cdot 
        \left| \phi_{S,i} \right|_{W^{s,p}\Lambda^{k}(T)}
        .
    \end{align*}
    Theorem~\ref{theorem:biorthogonal} implies
    \begin{gather*}
        \left| \phi_{S,i} \right|_{W^{s,p}\Lambda^{k}(T)}
        \leq 
        C_{{A},s} h_{T}^{\frac n p - s - k } 
        .
    \end{gather*}
    Together with $\Proj_{T'}\omega_{|T'} \in \calP\Lambda^{k}(T')$,
    Theorem~\ref{theorem:biorthogonal} and Proposition~\ref{prop:localprojection} show that  
    \begin{align*}
        \left| \phi_{S,i}^{\ast}\left( \Proj_{T'}\omega_{|T'} \right) \right|
        &\leq 
        C_{{A},m} h_{T'}^{m + k - \frac n p} 
        \left| \Proj_{T'}\omega_{|T'} \right|_{W^{m,p}\Lambda^{k}(T')}
        \\&\leq 
        C_{\textrm{BH}} 
        C_{{A},m} h_{T'}^{m + k - \frac n p} 
        \left| \omega_{|T'} \right|_{W^{m,p}\Lambda^{k}(T')}
        .
    \end{align*}
    Noting that, first, adjacent simplices have comparable diameters,
    and that, second, only finitely many simplices touch any given simplex,
    we conclude the proof.
\end{proof}

Further discussion requires an additional property of simplicial complexes. 
Suppose that we have two $n$-dimensional simplices $T_0, T \in \calT$ 
with non-empty intersection $S = T_0 \cap T$.
A \emph{face-connection from $T_{0}$ to $T$ around $S$}
is a sequence $T_1, \dots, T_N$ of pairwise distinct $n$-dimensional simplices of $\calT$
with $T_N = T$ such that 
for all $1 \leq i \leq N$ we have that $F_{i} = T_{i} \cap T_{i-1}$
satisfies $F_{i} \in \Delta_{n-1}(T_{i}) \cap \Delta_{n-1}(T_{i-1})$ and $S \subseteq F_{i}$. 
We then call $T_{0}$ and $T$ \emph{face-connected},
and the triangulation $\calT$ is called \emph{face-connected}
if any two simplices with non-empty intersection are face-connected. 
The length of any face-connection is bounded in terms of triangulation's shape measure. 
For example, any simplicial complex that triangulates a domain is face-connected
\cite{scott1990finite,veeser2016approximating,licht2021}.

We explore different ways of estimating the interpolation error.
For that reason we develop a standardized estimate at first in the following lemma.

\begin{lemma} \label{lemma:standarderrorrepresentation}
    Let $p \in [1,\infty]$ and $s \in [0,\infty) $. 
    There exists $C > 0$
    such that for $\omega \in L^{p}\Lambda^{k}(\Omega)$ and $T \in \calT$ we have 
    \begin{align*}
        | \omega - \Projection \omega |_{W^{s,p}\Lambda^{k}(T)}
        &
        \leq 
        | \omega - \Proj_{T} \omega |_{W^{s,p}\Lambda^{k}(T)}
        \\&\qquad
        +
        C
        h_{T}^{\frac n p - k - s}
        \sum_{ \substack{ S \in \Delta(T),\; i \in I(S) 
        \\ T_{1},T_{2} \in \supersimplices_{n}(\calT,S) \\ T_{1} \cap T_{2} \in \Delta_{n-1}(T) } }
            | \phi_{S,i}^{\ast}\left( \Proj_{T_{1}} \omega \right) - \phi_{S,i}^{\ast}\left( \Proj_{T_{2}} \omega \right) |
        \\&\qquad 
        + 
        C
        h_{T}^{\frac n p - k - s}
        \sum_{ \substack{ S \in \Delta(T) \\ i \in I(S) \\ S \in \calU } }
        | \phi_{S,i}^{\ast}\left( \Proj_{T_{S}} \omega \right) |
        . 
    \end{align*}
    Here, $C > 0$ depends only on $p$, $s$, $r$, $n$, and the mesh regularity.
\end{lemma}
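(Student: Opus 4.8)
The plan is to decompose $\omega - \Projection\omega$ over the cell $T$ into a contribution from the local projection $\Proj_T\omega$ and a contribution from the mismatch between the several local projections that feed into the shared degrees of freedom. We write
\begin{align*}
    \omega - \Projection\omega
    &=
    (\omega - \Proj_T\omega) + (\Proj_T\omega - \Projection\omega)
    \quad\text{on } T,
\end{align*}
so it suffices to estimate $|\Proj_T\omega - \Projection\omega|_{W^{s,p}\Lambda^k(T)}$. The key observation is that $\Proj_T\omega_{|T} \in \calP\Lambda^k(T)$, and by the geometric decomposition and biorthogonality of Theorem~\ref{theorem:biorthogonal}, $\Proj_T\omega_{|T}$ can be written as $\sum_{S \in \Delta(T),\, i \in I(S)} \phi_{S,i}^\ast(\Proj_T\omega_{|T})\,\phi_{S,i}$ restricted to $T$ — using that $\Trace_{S}\phi_{S',i} = 0$ whenever $S' \nsubseteq S$, so only subsimplices $S$ of $T$ contribute. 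Subtracting the definition of $\Projection\omega$ and using the partition-of-unity property $\sum_{T' \in \supersimplices_n(\calT,S)} c(S,T) = 1$, each coefficient of $\phi_{S,i}$ in $\Proj_T\omega - \Projection\omega$ becomes either $\sum_{T' \in \supersimplices_n(\calT,S)} c(S,T')\bigl(\phi_{S,i}^\ast(\Proj_T\omega) - \phi_{S,i}^\ast(\Proj_{T'}\omega)\bigr)$ when $S \notin \calU$, or simply $\phi_{S,i}^\ast(\Proj_T\omega)$ when $S \in \calU$ (since such $S$ contribute nothing to $\Projection\omega$).

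Next I would bound each of these coefficient differences. For $S \notin \calU$: a telescoping argument along a face-connection from $T$ to $T'$ around $S$ — whose length is uniformly bounded by the shape measure — reduces $|\phi_{S,i}^\ast(\Proj_T\omega) - \phi_{S,i}^\ast(\Proj_{T'}\omega)|$ to a sum of terms $|\phi_{S,i}^\ast(\Proj_{T_1}\omega) - \phi_{S,i}^\ast(\Proj_{T_2}\omega)|$ over consecutive simplices $T_1, T_2$ sharing a facet $F \supseteq S$, which is exactly the quantity appearing in the second sum of the claimed estimate. For $S \in \calU$: by Convention~\ref{sec:dofrepresentative} the representative facet $F_S$ lies in $\calU$, and the representative volume is $T_S$; since the degree of freedom $\phi_{S,i}^\ast$ can be represented (Theorem~\ref{theorem:roughdof}) as a facet integral over $F_S$, the value $\phi_{S,i}^\ast(\Proj_T\omega)$ depends, for differing choices of adjacent cells, only through another telescoping chain reducing it to $|\phi_{S,i}^\ast(\Proj_{T_S}\omega)|$ plus facet-mismatch terms already counted in the second sum. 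Multiplying each coefficient by $|\phi_{S,i}|_{W^{s,p}\Lambda^k(T)} \leq C_{A,s}\,h_T^{n/p - k - s}$ from Theorem~\ref{theorem:biorthogonal} produces the prefactor $h_T^{n/p - k - s}$, and using that only finitely many simplices (bounded by $C_N(\calT)$) touch $T$ and that adjacent diameters are comparable ($C_Q(\calT)$) absorbs all the sums into a single constant.

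The main obstacle I anticipate is handling the boundary terms cleanly: one must verify that for $S \in \calU$ the only residual after telescoping is the single term $|\phi_{S,i}^\ast(\Proj_{T_S}\omega)|$ with $T_S$ the fixed representative volume, not an uncontrolled collection of boundary evaluations — this is precisely why Convention~\ref{sec:dofrepresentative} was set up to force $F_S \in \calU$, and the argument hinges on the localization property of the trace operators ($\Trace_S \phi_{S',i} = 0$ for $S' \nsubseteq S$) ensuring that boundary degrees of freedom associated to $S \in \calU$ genuinely drop out of $\Projection\omega$. A secondary technical point is bookkeeping the face-connection telescoping so that every intermediate facet-mismatch term is a valid summand in the stated second sum (i.e., $T_1, T_2 \in \supersimplices_n(\calT,S)$ with $T_1 \cap T_2 \in \Delta_{n-1}(T)$); this requires noting that a face-connection around $S$ passes only through simplices containing $S$, and — after possibly re-routing — through facets that are themselves faces of $T$ when $T$ is one endpoint. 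The rest is the routine scaling and finiteness bookkeeping indicated above.
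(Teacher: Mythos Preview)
Your approach is the paper's: triangle inequality to split off $|\omega-\Proj_T\omega|_{W^{s,p}\Lambda^k(T)}$, basis expansion of $\Proj_T\omega$ over $S\in\Delta(T)$, the partition-of-unity rewrite via $\sum_{T'}c(S,T')=1$, subtraction of $\Projection\omega$, the bound $|\phi_{S,i}|_{W^{s,p}\Lambda^k(T)}\le C_{A,s}h_T^{n/p-k-s}$, and then telescoping along face-connections around $S$ (from $T$ to $T'$ when $S\notin\calU$, from $T$ to the fixed $T_S$ when $S\in\calU$). The mention of the facet-integral representation from Theorem~\ref{theorem:roughdof} is superfluous here; it is only needed later, in Lemma~\ref{lemma:erroratboundary}.

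The one point to correct is your final paragraph. The proposed ``re-routing'' so that every intermediate facet in the chain is a facet of $T$ itself cannot work: if $S$ is, say, a vertex of $T$, any face-connection around $S$ reaching an $n$-simplex that is not a facet-neighbor of $T$ must pass through facets not contained in $T$. The condition ``$T_1\cap T_2\in\Delta_{n-1}(T)$'' in the lemma is a typographical slip for ``$T_1\cap T_2\in\Delta_{n-1}(\calT)$'', i.e.\ $T_1$ and $T_2$ share a common facet of the triangulation; this is exactly how the summands are used downstream in Lemmas~\ref{lemma:errorinbetween}, \ref{lemma:errorinbetween:sobolevtracetheory}, and~\ref{lemma:lowregularityestimate}, and the paper's own proof simply telescopes and stops without any re-routing. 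With the intended reading, every consecutive pair $(T_{j-1},T_j)$ in a face-connection around $S$ is automatically a valid summand, and your argument is complete.
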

\begin{proof}
    We begin with 
    \begin{align*}
        | \omega - \Projection \omega |_{W^{s,p}\Lambda^{k}(T)}
        \leq 
        | \omega - \Proj_{T} \omega |_{W^{s,p}\Lambda^{k}(T)}
        +
        | \Proj_{T} \omega - \Projection \omega |_{W^{s,p}\Lambda^{k}(T)}
        .
    \end{align*}
    We observe that 
    \begin{align*}
        \Proj_{T} \omega
        &=
        \sum_{ \substack{ S \in \Delta(T) \\ i \in I(S) \\ S \notin \calU } }
        \phi_{S,i}^{\ast}\left( \Proj_{T} \omega \right)
        \phi_{S,i} 
        +
        \sum_{ \substack{ S \in \Delta(T) \\ i \in I(S) \\ S \in \calU } }
        \phi_{S,i}^{\ast}\left( \Proj_{T} \omega \right)
        \phi_{S,i} 
        \\&
        =
        \sum_{ \substack{ S \in \Delta(T) \\ i \in I(S) \\ S \notin \calU } }
        \sum_{ \substack{ T' \in \supersimplices_{n}(\calT,S) } }
        c(S,T') \phi_{S,i}^{\ast}\left( \Proj_{T} \omega \right)
        \phi_{S,i} 
        +
        \sum_{ \substack{ S \in \Delta(T) \\ i \in I(S) \\ S \in \calU } }
        \phi_{S,i}^{\ast}\left( \Proj_{T} \omega \right)
        \phi_{S,i} 
        .
    \end{align*}
    Whence $\Proj_{T} \omega - \Projection \omega_{|T}$ equals 
    \begin{align*}
\sum_{ \substack{ S \in \Delta(T) \\ i \in I(S) \\ S \notin \calU } }
        \sum_{ \substack{ T' \in \supersimplices_{n}(\calT,S) } }
        c(S,T') \left( 
        \phi_{S,i}^{\ast}\left( \Proj_{T} \omega \right)
        -
        \phi_{S,i}^{\ast}\left( \Proj_{T'} \omega \right)
        \right) 
        \phi_{S,i} 
+
        \sum_{ \substack{ S \in \Delta(T) \\ i \in I(S) \\ S \in \calU } }
\phi_{S,i}^{\ast}\left( \Proj_{T} \omega \right)
        \phi_{S,i} 
        . 
    \end{align*}
    We infer that $| \Proj_{T} \omega - \Projection \omega |_{W^{s,p}\Lambda^{k}(T)}$ is bounded by     
    \begin{align*}
        &
        \sum_{ \substack{ S \in \Delta(T) \\ i \in I(S) \\ S \notin \calU } }
        \sum_{ \substack{ T' \in \supersimplices_{n}(\calT,S) } }
        \left| 
            \phi_{S,i}^{\ast}\left( \Proj_{T} \omega \right)
            -
            \phi_{S,i}^{\ast}\left( \Proj_{T'} \omega \right)
        \right| 
        \cdot | \phi_{S,i} |_{W^{s,p}\Lambda^{k}(T)}
        \\&\qquad
        +
        \sum_{ \substack{ S \in \Delta(T) \\ i \in I(S) \\ S \in \calU } }
| \phi_{S,i}^{\ast}\left( \Proj_{T} \omega \right) |
        \cdot | \phi_{S,i} |_{W^{s,p}\Lambda^{k}(T)}
        . 
    \end{align*}
    We apply the inverse inequality~\eqref{theorem:biorthogonal:boundphi:primal}:
    \begin{align*}
        | \phi_{S,i} |_{W^{s,p}\Lambda^{k}(T)}
        \leq 
        C_{A,s}
        h_{T}^{\frac n p - k - s}
        .
    \end{align*}
    We consider two cases. If $S \in \calU$, 
    then there exists a face-connection $T_{0}, T_{1}, \dots, T_{N}$ between $T_{0} = T$ and $T_{N} = T_{S}$ around $S$.
    We then estimate 
    \begin{align*}
        | \phi_{S,i}^{\ast}\left( \Proj_{T} \omega \right) |
        \leq 
        | \phi_{S,i}^{\ast}\left( \Proj_{T_{S}} \omega \right) |
        + 
        \sum_{j=1}^{N}
        | \phi_{S,i}^{\ast}\left( \Proj_{T_{j-1}} \omega \right) - \phi_{S,i}^{\ast}\left( \Proj_{T_{j}} \omega \right) |
        .
    \end{align*}
    If $S \notin \calU$, 
    then there exists a face-connection $T_{0}, T_{1}, \dots, T_{N}$ between $T_{0} = T$ and $T_{N} = T'$ around $S$.
    \begin{align*}
        | \phi_{S,i}^{\ast}\left( \Proj_{T_{0}} \omega \right) - \phi_{S,i}^{\ast}\left( \Proj_{T_{N}} \omega \right) |
        \leq 
        \sum_{i=j}^{N}
        | \phi_{S,i}^{\ast}\left( \Proj_{T_{j-1}} \omega \right) - \phi_{S,i}^{\ast}\left( \Proj_{T_{j}} \omega \right) |
        .
    \end{align*}
Noting that only finitely simplices are adjacent to $T$, the proof is completed.
\end{proof}

We use that preliminary error estimate to develop more specific error estimates in different regularity settings. 
First we bound the terms associated to degrees of freedom along the boundary part $\Gamma$
in our standard error representation as follows. 

\begin{lemma} \label{lemma:erroratboundary}
    Let $p \in [1,\infty]$ and $s \in [0,\infty) $. 
    If $T \in \calT$ and $S \in \Delta(T)$ with $S \in \calU$,
    then for every $\omega \in \calW^{p,p}\Lambda^{k}(\Omega,\Gamma)$ 
    we have 
    \begin{align*}
        | \phi^\ast_{S,i} \left( \Proj_{T} \omega \right) |
        \leq 
        C_{\Xi} 
        h_{S}^{ -\frac{n}{p} +k  }
        \Big( 
            \| \Proj_{S} \omega - \omega \|_{L^{p}\Lambda^{k}(T_{S})}
            +
            h_{S} 
            \| \exteriorderivative \Proj_{S} \omega - \exteriorderivative\omega \|_{L^{p}\Lambda^{k+1}(T_{S})}
        \Big)
        .
    \end{align*}
\end{lemma}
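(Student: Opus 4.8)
The plan is to rewrite $\phi_{S,i}^{\ast}\left( \Proj_{T_{S}}\omega \right)$ --- the quantity that enters Lemma~\ref{lemma:standarderrorrepresentation} for $S \in \calU$ --- through the integration-by-parts representation of Theorem~\ref{theorem:roughdof} attached to the representative facet $F_{S} \in \calU$ and volume $T_{S}$ from Convention~\ref{sec:dofrepresentative}, then to exploit the partial boundary conditions of $\omega$, and finally to estimate the remainder by H\"older's inequality and the scaling bounds for $\Xi_{S,i}$. Since $\Proj_{T_{S}}\omega \in \mathcal P\Lambda^{k}(T_{S}) \subseteq \mathcal P_{r}\Lambda^{k}(T_{S})$, Theorem~\ref{theorem:roughdof} gives
\begin{align*}
    \phi_{S,i}^{\ast}\left( \Proj_{T_{S}}\omega \right)
    =
    o(F_{S},T_{S})\left(
        \int_{T_{S}} \exteriorderivative\Xi_{S,i} \wedge \Proj_{T_{S}}\omega
        + (-1)^{n-k-1} \int_{T_{S}} \Xi_{S,i} \wedge \exteriorderivative\Proj_{T_{S}}\omega
    \right) .
\end{align*}
Writing $\Proj_{T_{S}}\omega = \omega + (\Proj_{T_{S}}\omega - \omega)$, and similarly for the exterior derivative, I would split this as
\begin{align*}
    \phi_{S,i}^{\ast}\left( \Proj_{T_{S}}\omega \right)
    =
    o(F_{S},T_{S})\left(
        \int_{T_{S}} \exteriorderivative\Xi_{S,i} \wedge ( \Proj_{T_{S}}\omega - \omega )
        + (-1)^{n-k-1} \int_{T_{S}} \Xi_{S,i} \wedge ( \exteriorderivative\Proj_{T_{S}}\omega - \exteriorderivative\omega )
    \right) + R ,
\end{align*}
where $R := o(F_{S},T_{S})\bigl( \int_{T_{S}} \exteriorderivative\Xi_{S,i}\wedge\omega + (-1)^{n-k-1}\int_{T_{S}}\Xi_{S,i}\wedge\exteriorderivative\omega \bigr)$.

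The crucial step is to show $R = 0$, and this is where the hypothesis $\omega \in \calW^{p,p}\Lambda^{k}(\Omega,\Gamma)$ is used. Recall that $\Xi_{S,i}$ is smooth on $T_{S}$ and supported at a positive distance from every facet of $T_{S}$ other than $F_{S}$ --- hence also away from the relative boundary of $F_{S}$ --- and that the relative interior of $F_{S}$ is contained in $\Gamma$ because $F_{S} \in \calU$, with $T_{S}$ the only $n$-simplex of $\calT$ meeting $\Omega$ near that relative interior. I would then cover $\supp \Xi_{S,i}$ by finitely many balls, each either contained in the interior of $T_{S}$ or centred at a point $x$ in the relative interior of $F_{S}$ with radius at most the $\rho > 0$ furnished by the definition of $\calW^{p,p}\Lambda^{k}(\Omega,\Gamma)$ at $x$; picking a subordinate partition of unity $\{\psi_{l}\}$ with $\sum_{l}\psi_{l} \equiv 1$ on $\supp \Xi_{S,i}$ gives $\Xi_{S,i} = \sum_{l}\psi_{l}\Xi_{S,i}$ on $T_{S}$. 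For a ball of the first kind, $\psi_{l}\Xi_{S,i}$ is compactly supported inside the open set $\Omega$, so the distributional definition of $\exteriorderivative\omega$ yields the integration-by-parts identity on that piece; for a ball of the second kind, $\psi_{l}\Xi_{S,i}$ extends to a smooth, compactly supported test form, and the defining identity of $\calW^{p,p}\Lambda^{k}(\Omega,\Gamma)$ applies. Summing over $l$ and rearranging signs gives $\int_{T_{S}} \exteriorderivative\Xi_{S,i}\wedge\omega = (-1)^{n-k}\int_{T_{S}}\Xi_{S,i}\wedge\exteriorderivative\omega$, so the two contributions to $R$ cancel.

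It then remains to bound the two surviving integrals. Let $p' \in [1,\infty]$ be the exponent conjugate to $p$, so $n/p' = n - n/p$. From $|o(F_{S},T_{S})| = 1$ and H\"older's inequality,
\begin{align*}
    \left| \phi_{S,i}^{\ast}\left( \Proj_{T_{S}}\omega \right) \right|
    \leq
    \| \exteriorderivative\Xi_{S,i} \|_{L^{p'}\Lambda^{n-k}(T_{S})} \, \| \Proj_{T_{S}}\omega - \omega \|_{L^{p}\Lambda^{k}(T_{S})}
    + \| \Xi_{S,i} \|_{L^{p'}\Lambda^{n-k-1}(T_{S})} \, \| \exteriorderivative\Proj_{T_{S}}\omega - \exteriorderivative\omega \|_{L^{p}\Lambda^{k+1}(T_{S})} .
\end{align*}
Inserting the scaling estimates \eqref{theorem:roughdof:scaling:exteriorderivativexi} and \eqref{theorem:roughdof:scaling:xi} with $q = p'$ turns the two prefactors into $C_{\Xi}\, h_{S}^{\,k-n/p}$ and $C_{\Xi}\, h_{S}^{\,k-n/p+1}$, and factoring out $h_{S}^{\,-n/p+k}$ yields the asserted bound.

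The main obstacle is the vanishing of $R$: one must ensure that $\Xi_{S,i}$, given only as a smooth form on $T_{S}$, can be cut into admissible test forms for the partial-boundary-condition identity, and that the pieces away from $\Gamma$ genuinely lie compactly inside the open domain. The support properties of $\Xi_{S,i}$ together with the fact that the relative interior of $F_{S}$ lies in $\Gamma$ and that $T_{S}$ is the unique $n$-simplex adjacent there are exactly what make the partition-of-unity argument run; the remaining steps are routine bookkeeping with H\"older's inequality and the scaling exponents.
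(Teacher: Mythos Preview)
Your argument is correct and follows the same route as the paper: rewrite the degree of freedom via the volume representation of Theorem~\ref{theorem:roughdof} on $T_{S}$, subtract off the corresponding integral involving $\omega$ itself (which vanishes by the partial boundary condition along $F_{S}\subseteq\overline\Gamma$), and then estimate the remaining difference with H\"older and the scaling bounds~\eqref{theorem:roughdof:scaling:xi}--\eqref{theorem:roughdof:scaling:exteriorderivativexi}. The paper simply asserts the identity $0=\int_{T_{S}}\exteriorderivative\Xi_{S,i}\wedge\omega+(-1)^{n-k-1}\Xi_{S,i}\wedge\exteriorderivative\omega$ in one line, whereas you supply a partition-of-unity justification; this extra detail is welcome since the definition of $\calW^{p,p}\Lambda^{k}(\Omega,\Gamma)$ is local and $\Xi_{S,i}$ is a priori only given on $T_{S}$, so the passage to a global test form needs exactly the support considerations you spell out. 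Note also that you have silently (and correctly) read the projections in the statement as $\Proj_{T_{S}}$, which is the quantity actually appearing in Lemma~\ref{lemma:standarderrorrepresentation}; the occurrences of $\Proj_{T}$ and $\Proj_{S}$ in the lemma as printed are evidently typos for $\Proj_{T_{S}}$.
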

\begin{proof}    
    We use the representation of the degrees of freedom in Theorem~\ref{theorem:roughdof}:
    \begin{align*}
        \phi^\ast_{S,i} \left( \Proj_{T} \omega \right)
        &=
o(F_{S},T_{S})
        \int_{T_{S}}
        \exteriorderivative\Xi_{S,i} \wedge (\Proj_{S} \omega)_{|T_{S}}
        +
        (-1)^{n-k-1} \Xi_{S,i} \wedge \exteriorderivative(\Proj_{S} \omega)_{|T_{S}}
        .
    \end{align*}
    Since $\omega \in \calW^{p,p}\Lambda^{k}(\Omega,\Gamma)$ and $F_{S} \subseteq \overline \Gamma$,
    \begin{align*}
        0 
        =
        \int_{T_{S}}
        \exteriorderivative\Xi_{S,i} \wedge \omega
        + 
        (-1)^{n-k-1} \Xi_{S,i} \wedge \exteriorderivative \omega
        .
    \end{align*}
    Subtracting the second from the first equation gives 
    \begin{align*}
        &
        o(F_{S},T_{S})
        \phi^\ast_{S,i} \left( \Proj_{T} \omega \right)
        \\&\quad
        =
        \int_{T_{S}}
        \exteriorderivative\Xi_{S,i} \wedge \left( (\Proj_{S} \omega)_{|T_{S}} - \omega \right)
        + 
        (-1)^{n-k-1} \Xi_{S,i} \wedge \exteriorderivative\left( (\Proj_{S} \omega)_{|T_{S}} - \omega \right)
        .
    \end{align*}
    Let $q \in [1,\infty]$ such that $1 = 1/p + 1/{q}$.
    H\"older's inequality gives 
    \begin{align*}
        \left| \phi^\ast_{S,i} \left( \Proj_{T} \omega \right) \right|
        &\leq 
        \| \exteriorderivative\Xi_{S,i} \|_{L^{{q}}\Lambda^{n-k}(T_{S})} 
        \| \Proj_{S} \omega - \omega \|_{L^{p}\Lambda^{k}(T_{S})}
        \\&\qquad+ 
        \| \Xi_{S,i} \|_{L^{{q}}\Lambda^{n-k-1}(T_{S})} 
        \| \exteriorderivative \Proj_{S} \omega - \exteriorderivative\omega \|_{L^{p}\Lambda^{k+1}(T_{S})}
        .
    \end{align*}
    Together with the inverse inequalities \eqref{theorem:roughdof:scaling:xi} and \eqref{theorem:roughdof:scaling:exteriorderivativexi},
the desired result follows. 
\end{proof}

Next we bound the differences associated to degrees of freedom over neighboring volumes 
in our standard error representation as follows. 

\begin{lemma} \label{lemma:errorinbetween}
    Let $p \in [1,\infty]$ and $s \in [0,\infty) $. 
    Suppose that $T_{1}, T_{2} \in \calT$ share a common facet, 
    that $S \in \Delta(T_{1}) \cap \Delta(T_{2})$, and that $i \in I(S)$.
    For every $\omega \in \calW^{p,p}\Lambda^{k}(\Omega)$ 
    we have 
    \begin{align*}
        &
        | \phi^\ast_{S,i} \left( \Proj_{T_{1}} \omega \right) - \phi^\ast_{S,i} \left( \Proj_{T_{2}} \omega \right) |
        \\&
        \leq 
        C_{\Xi} \bigg( 
            h_{S}^{ -\frac{n}{p}+k }
            \| \omega - \Proj_{T_{1}} \omega \|_{L^{p}\Lambda^{k}(T_{1})} 
            + 
            h_{S}^{ -\frac{n}{p}+k+1 }
            \| \exteriorderivative \omega - \exteriorderivative \Proj_{T_{1}} \omega \|_{L^{p}\Lambda^{k+1}(T_{1})}  
            \\&\qquad
            +
            h_{S}^{ -\frac{n}{p}+k }
            \| \omega - \Proj_{T_{2}} \omega \|_{L^{p}\Lambda^{k}(T_{2})}  
            + 
            h_{S}^{ -\frac{n}{p}+k+1 }
            \| \exteriorderivative \omega - \exteriorderivative \Proj_{T_{2}} \omega \|_{L^{p}\Lambda^{k+1}(T_{2})}  
        \bigg)
        .
    \end{align*}
\end{lemma}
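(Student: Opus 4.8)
The plan is to compare the two values by pivoting through $\omega$ itself, using the integration-by-parts representation of the degree of freedom over each of the two cells and cancelling the resulting ``$\omega$-parts''. Since $T_{1}$ and $T_{2}$ share a facet, set $F := T_{1}\cap T_{2}\in\Delta_{n-1}(\calT)$; as $S\in\Delta(T_{1})\cap\Delta(T_{2})$ is a common subsimplex we have $S\subseteq F$, so both $(T_{1},F,S,i)$ and $(T_{2},F,S,i)$ are admissible in Theorem~\ref{theorem:roughdof}. Write $\Xi^{(j)} := \Xi_{T_{j},F,S,i}$. First I would apply the volume representation of Theorem~\ref{theorem:roughdof} to $\Proj_{T_{j}}\omega\in\calP_{r}\Lambda^{k}(T_{j})$ and insert $\omega$ to split it as $\phi^{\ast}_{S,i}(\Proj_{T_{j}}\omega) = R_{j} + A_{j}$ with
\begin{align*}
  R_{j} &:= o(F,T_{j})\int_{T_{j}} \exteriorderivative\Xi^{(j)}\wedge\omega + (-1)^{n-k-1}\,\Xi^{(j)}\wedge\exteriorderivative\omega,
  \\
  A_{j} &:= o(F,T_{j})\int_{T_{j}} \exteriorderivative\Xi^{(j)}\wedge(\Proj_{T_{j}}\omega-\omega) + (-1)^{n-k-1}\,\Xi^{(j)}\wedge(\exteriorderivative\Proj_{T_{j}}\omega-\exteriorderivative\omega).
\end{align*}
Inserting $\omega$ is legitimate because $\omega,\exteriorderivative\omega\in L^{p}$ on $T_{j}$ and $\exteriorderivative\Proj_{T_{j}}\omega = \ProjD_{T_{j}}\exteriorderivative\omega$ by \eqref{math:polyapproximation:commutativity}.

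The decisive step will be the cancellation $R_{1}=R_{2}$. Because $\Xi^{(j)}$ is supported at positive distance from every facet of $T_{j}$ other than $F$, the facet representation in Theorem~\ref{theorem:roughdof} shows that $R_{j}$ depends on $\omega$ only through its tangential trace on the interior facet $F$, which a form $\omega\in\calW^{p,p}\Lambda^{k}(\Omega)$ sees the same way from $T_{1}$ and from $T_{2}$; equivalently, $R_{j}$ is nothing but the value at $\omega$ of the integration-by-parts extension of the functional $\phi^{\ast}_{S,i}$, which by \cite[Theorem~7.1]{licht2021} does not depend on which cell $T\supseteq F$ is used to represent it. I would make this rigorous by the usual density argument: for smooth $\omega$ one has $R_{j} = o(F,T_{j})\int_{F}\trace_{T_{j},F}\Xi^{(j)}\wedge\trace_{T_{j},F}\omega$, where $\trace_{T_{1},F}\omega = \trace_{T_{2},F}\omega$ and where $o(F,T_{1})\trace_{T_{1},F}\Xi^{(1)}$ and $o(F,T_{2})\trace_{T_{2},F}\Xi^{(2)}$ represent the same functional on $\trace_{F}\calP_{r}\Lambda^{k}(T_{j}) = \calP_{r}\Lambda^{k}(F)$; the general case follows since $R_{1}$ and $R_{2}$ are continuous on $\calW^{p,p}\Lambda^{k}(\Omega)$ (by Hölder's inequality with \eqref{theorem:roughdof:scaling:xi} and \eqref{theorem:roughdof:scaling:exteriorderivativexi}) and agree on the dense subspace of smooth forms, after reducing the $p=\infty$ case to $\calW^{1,1}\Lambda^{k}(\Omega)$ since $\Omega$ is bounded. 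Consequently $\phi^{\ast}_{S,i}(\Proj_{T_{1}}\omega) - \phi^{\ast}_{S,i}(\Proj_{T_{2}}\omega) = A_{1}-A_{2}$.

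It then remains to bound $|A_{j}|$, which is routine. With $q\in[1,\infty]$ conjugate to $p$, Hölder's inequality gives
\begin{align*}
  |A_{j}| &\leq \|\exteriorderivative\Xi^{(j)}\|_{L^{q}\Lambda^{n-k}(T_{j})}\,\|\omega-\Proj_{T_{j}}\omega\|_{L^{p}\Lambda^{k}(T_{j})}
  \\ &\qquad + \|\Xi^{(j)}\|_{L^{q}\Lambda^{n-k-1}(T_{j})}\,\|\exteriorderivative\omega-\exteriorderivative\Proj_{T_{j}}\omega\|_{L^{p}\Lambda^{k+1}(T_{j})},
\end{align*}
and the scaling inequalities \eqref{theorem:roughdof:scaling:exteriorderivativexi} and \eqref{theorem:roughdof:scaling:xi}, together with $n/q = n - n/p$, replace the two prefactors by $C_{\Xi}h_{S}^{-n/p+k}$ and $C_{\Xi}h_{S}^{-n/p+k+1}$. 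Summing the bounds for $j=1$ and $j=2$ via $|\phi^{\ast}_{S,i}(\Proj_{T_{1}}\omega)-\phi^{\ast}_{S,i}(\Proj_{T_{2}}\omega)| \leq |A_{1}|+|A_{2}|$ yields the asserted inequality. The only genuine obstacle is the cancellation $R_{1}=R_{2}$, i.e.\ the cell-independence of the rough degree of freedom across the shared facet; once that is secured, the remainder is just Hölder's inequality and the scaling estimates already in hand.
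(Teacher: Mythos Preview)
Your overall strategy matches the paper's: decompose $\phi^{\ast}_{S,i}(\Proj_{T_{j}}\omega)=R_{j}+A_{j}$, cancel $R_{1}=R_{2}$, and bound $|A_{j}|$ via H\"older and the scaling estimates~\eqref{theorem:roughdof:scaling:xi}--\eqref{theorem:roughdof:scaling:exteriorderivativexi}. The estimation of $A_{j}$ is correct. The weak point is your justification of $R_{1}=R_{2}$. For smooth $\omega$ you reduce each $R_{j}$ to a facet integral and then assert equality because $o(F,T_{1})\trace_{T_{1},F}\Xi^{(1)}$ and $o(F,T_{2})\trace_{T_{2},F}\Xi^{(2)}$ ``represent the same functional on $\calP_{r}\Lambda^{k}(F)$''. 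That is true but insufficient: $\trace_{T_{j},F}\omega$ is a general smooth $k$-form on $F$, not a polynomial, and two smooth $(n{-}k{-}1)$-forms on $F$ that pair identically against $\calP_{r}\Lambda^{k}(F)$ need not pair identically against all smooth forms. What is actually needed is the equality $\trace_{T_{1},F}\Xi^{(1)}=\trace_{T_{2},F}\Xi^{(2)}$ of the traces themselves, which is a property of the specific construction in \cite{licht2021} rather than a consequence of the functional they represent.

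The paper avoids the density argument altogether. It glues the two test forms into a single $\Xi_{F,S,i}\in L^{\infty}\Lambda^{n-k-1}(\Omega)$ with $\Xi_{F,S,i|T_{j}}=\Xi^{(j)}$, compactly supported in the interior of $T_{1}\cup T_{2}$, and observes $\Xi_{F,S,i}\in\calW^{\infty,\infty}\Lambda^{n-k-1}(\Omega)$; this membership is exactly the statement that the tangential traces match across $F$. Then the definition of the weak exterior derivative of $\omega\in\calW^{p,p}\Lambda^{k}(\Omega)$ gives
\begin{align*}
 \int_{T_{1}\cup T_{2}}\exteriorderivative\Xi_{F,S,i}\wedge\omega+(-1)^{n-k-1}\Xi_{F,S,i}\wedge\exteriorderivative\omega=0
\end{align*}
directly for every such $\omega$, and splitting over $T_{1}$ and $T_{2}$ together with $o(F,T_{1})=-o(F,T_{2})$ yields $R_{1}=R_{2}$ without any approximation step. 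Your proof becomes complete if you replace the ``same functional on polynomials'' step by this gluing argument, or if you invoke the trace equality explicitly as a feature of the construction rather than trying to deduce it from duality with polynomials.
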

\begin{proof}
    Let $F \in \calT$ be the common facet of $T_{1}$ and $T_{2}$. 
    On the one hand, we have 
    \begin{align*}
        \phi^\ast_{S,i}( \Proj_{T_{1}} \omega ) 
= 
        o(F,T_{1})
        \int_{T_{1}} \exteriorderivative\Xi_{T_{1},F,S,i} \wedge \Proj_{T_{1}} \omega + (-1)^{n-k+1}\Xi_{T_{1},F,S,i} \wedge \exteriorderivative \Proj_{T_{1}} \omega
        \\
        \phi^\ast_{S,i}( \Proj_{T_{2}} \omega ) 
= 
        o(F,T_{2})
        \int_{T_{2}} \exteriorderivative\Xi_{T_{2},F,S,i} \wedge \Proj_{T_{2}} \omega + (-1)^{n-k+1}\Xi_{T_{2},F,S,i} \wedge \exteriorderivative \Proj_{T_{2}} \omega
        .
    \end{align*}
    On the other hand, let $\Xi_{F,S,i} \in L^{\infty}\Lambda^{n-k-1}(\Omega)$ be the differential form 
    with compact support in the interior of $T_{1} \cup T_{2}$ and 
    \begin{align*}
        \Xi_{F,S,i|T_{1}} = \Xi_{T_{1},F,S,i}, \quad \Xi_{F,S,i|T_{2}} = \Xi_{T_{2},F,S,i}.
    \end{align*}
    We have $\Xi_{F,S,i} \in \mathcal W^{\infty,\infty}\Lambda^{n-k-1}(\Omega)$. 
    Consequently, 
    \begin{align*}
        0 
        =
        \int_{T_{1} \cup T_{2}} \exteriorderivative\Xi_{F,S,i} \wedge \omega + (-1)^{n-k+1} \Xi_{F,S,i} \wedge \exteriorderivative\omega
        .
    \end{align*}
Since $T_{1}$ and $T_{2}$ induce opposing orientations on their common facet $F$,
    we have $o(F,T_{1}) = - o(F,T_{2})$. 
    We split the last integral and find 
    \begin{align*}
        &
        o(F,T_{1})
        \int_{T_{1  }} \exteriorderivative\Xi_{T_{1  },F,S,i} \wedge \omega + (-1)^{n-k+1} \Xi_{T_{1  },F,S,i} \wedge \exteriorderivative\omega
        \\&\qquad 
        -
        o(F,T_{2})
        \int_{T_{2}} \exteriorderivative\Xi_{T_{2},F,S,i} \wedge \omega + (-1)^{n-k+1} \Xi_{T_{2},F,S,i} \wedge \exteriorderivative\omega
        = 0
        .
    \end{align*}
    The combination of these identities shows that 
    \begin{align*}
        &
        \phi^\ast_{S,i} \left( \Proj_{T_{1}} \omega \right) - \phi^\ast_{S,i} \left( \Proj_{T_{2}} \omega \right)
        \\&\quad =
        o(F,T_{1})
        \int_{T_{1}} 
        \exteriorderivative \Xi_{T_{1},F,S,i} \wedge \left( \Proj_{T_{1}} \omega - \omega \right) 
        + 
        (-1)^{n-k+1} 
        \Xi_{T_{1},F,S,i} \wedge \exteriorderivative \left( \Proj_{T_{1}} \omega - \omega \right)
        \\&\quad \quad 
        -
        o(F,T_{2})
        \int_{T_{2}} 
        \exteriorderivative \Xi_{T_{2},F,S,i} \wedge \left( \Proj_{T_{2}} \omega - \omega \right) 
        + 
        (-1)^{n-k+1} 
        \Xi_{T_{2},F,S,i} \wedge \exteriorderivative \left( \Proj_{T_{2}} \omega - \omega \right)
        .
    \end{align*}
    Using H\"older's inequality,
    we bound
    $\left|\phi^\ast_{S,i} \left( \Proj_{T_{1}} \omega \right) - \phi^\ast_{S,i} \left( \Proj_{T_{2}} \omega \right)\right|$
    by 
    \begin{align*}
&
\| \exteriorderivative \Xi_{T_{1},F,S,i} \|_{L^{q}\Lambda^{n-k}(T_{1})}  
        \| \omega - \Proj_{T_{1}} \omega \|_{L^{p}\Lambda^{k}(T_{1})} 
        \\&\quad
        + 
        \| \Xi_{T_{1},F,S,i} \|_{L^{q}\Lambda^{n-k-1}(T_{1})}  
        \| \exteriorderivative \omega - \exteriorderivative \Proj_{T_{1}} \omega \|_{L^{p}\Lambda^{k+1}(T_{1})}  
        \\&\quad
        +
        \| \exteriorderivative \Xi_{T_{2},F,S,i} \|_{L^{q}\Lambda^{n-k}(T_{2})}  
        \| \omega - \Proj_{T_{2}} \omega \|_{L^{p}\Lambda^{k}(T_{2})}  
        \\&\quad
        + 
        \| \Xi_{T_{2},F,S,i} \|_{L^{q}\Lambda^{n-k-1}(T_{2})}  
        \| \exteriorderivative \omega - \exteriorderivative \Proj_{T_{2}} \omega \|_{L^{p}\Lambda^{k+1}(T_{2})}  
        .
    \end{align*}
    The proof is completed with the inverse inequalities \eqref{theorem:roughdof:scaling:xi} and \eqref{theorem:roughdof:scaling:exteriorderivativexi}.
\end{proof}

We can now combine our first main result,
which is the broken Bramble-Hilbert lemma for differential forms of modest regularity. 

\begin{theorem} \label{theorem:mainresult}
    Let $p \in [1,\infty]$ and $s \in [0,\infty)$. 
    For $\omega \in \calW^{p,p}\Lambda^{k}(\Omega)$ and $T \in \calT$ we have 
    \begin{align*}
        &
        \left| \omega - \Projection \omega \right|_{W^{s,p}\Lambda^{k}(T)}
        \\&
        \leq 
        \left| \omega - \Proj_{T} \omega \right|_{W^{s,p}(T)}
        \\&\qquad
        +
        C
        h_{T}^{-s}
        \sum_{ \substack{ T' \in \Delta_{n}(\calT) \\ T' \cap T \neq \emptyset } }
        \left( 
            \| \omega - \Proj_{T'} \omega \|_{L^{p}\Lambda^{k}(T')} 
            + 
            h_{T}
            \| \exteriorderivative \omega - \exteriorderivative \Proj_{T'} \omega \|_{L^{p}\Lambda^{k+1}(T')}  
        \right)
        . 
    \end{align*}
    Here, $C > 0$ depends only on $p$, $s$, $r$, $n$ and the mesh regularity. 
\end{theorem}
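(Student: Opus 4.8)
The plan is to assemble the estimate from the three preceding lemmas, using no new analytic ingredient but only the mesh-regularity constants $C_{\rm Q}(\calT)$ and $C_{\rm N}(\calT)$ to collapse powers of the mesh size and to bound the number of summands. First I would apply Lemma~\ref{lemma:standarderrorrepresentation} with the given $p$ and $s$. This bounds $|\omega-\Projection\omega|_{W^{s,p}\Lambda^{k}(T)}$ by three contributions: the elementwise quantity $|\omega-\Proj_{T}\omega|_{W^{s,p}\Lambda^{k}(T)}$, which already coincides with the first term of the claim; a prefactor $C\,h_{T}^{\frac{n}{p}-k-s}$ times a sum over subsimplices $S\in\Delta(T)$, indices $i\in I(S)$, and pairs $T_{1},T_{2}\in\supersimplices_{n}(\calT,S)$ sharing a facet, of the jumps $|\phi_{S,i}^{\ast}(\Proj_{T_{1}}\omega)-\phi_{S,i}^{\ast}(\Proj_{T_{2}}\omega)|$; and the same prefactor times a sum over boundary subsimplices $S\in\Delta(T)$ with $S\in\calU$ and $i\in I(S)$ of $|\phi_{S,i}^{\ast}(\Proj_{T_{S}}\omega)|$.

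Next I would estimate the individual summands. For the jump terms I would invoke Lemma~\ref{lemma:errorinbetween}, which controls $|\phi_{S,i}^{\ast}(\Proj_{T_{1}}\omega)-\phi_{S,i}^{\ast}(\Proj_{T_{2}}\omega)|$ by $C_{\Xi}$ times the sum over $j\in\{1,2\}$ of $h_{S}^{-\frac{n}{p}+k}\|\omega-\Proj_{T_{j}}\omega\|_{L^{p}\Lambda^{k}(T_{j})}+h_{S}^{-\frac{n}{p}+k+1}\|\exteriorderivative\omega-\exteriorderivative\Proj_{T_{j}}\omega\|_{L^{p}\Lambda^{k+1}(T_{j})}$; for the boundary terms I would invoke Lemma~\ref{lemma:erroratboundary}, which is applicable since $\omega$ satisfies the partial boundary conditions along $\Gamma$ (and when $\calU=\emptyset$ this sum is empty), bounding $|\phi_{S,i}^{\ast}(\Proj_{T_{S}}\omega)|$ by $C_{\Xi}\,h_{S}^{-\frac{n}{p}+k}\bigl(\|\Proj_{T_{S}}\omega-\omega\|_{L^{p}\Lambda^{k}(T_{S})}+h_{S}\|\exteriorderivative\Proj_{T_{S}}\omega-\exteriorderivative\omega\|_{L^{p}\Lambda^{k+1}(T_{S})}\bigr)$. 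Inserting these bounds, the prefactor $h_{T}^{\frac{n}{p}-k-s}$ combines with the negative powers of $h_{S}$: since every simplex occurring (the $T_{j}$ and the representatives $T_{S}$) contains the subsimplex $S\subseteq T$ and is therefore adjacent to $T$, its diameter is comparable to $h_{T}$ up to a factor controlled by $C_{\rm Q}(\calT)$, so the exponents reduce to $h_{T}^{-s}$ on the $L^{p}$-terms and to $h_{T}\cdot h_{T}^{-s}$ on the exterior-derivative terms, which matches the structure of the right-hand side.

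Finally, I would note that each $n$-simplex entering these sums intersects $T$, since it contains a subsimplex of $T$; hence every resulting term is dominated by a term of $\sum_{T'\in\Delta_{n}(\calT),\,T'\cap T\neq\emptyset}\bigl(\|\omega-\Proj_{T'}\omega\|_{L^{p}\Lambda^{k}(T')}+h_{T}\|\exteriorderivative\omega-\exteriorderivative\Proj_{T'}\omega\|_{L^{p}\Lambda^{k+1}(T')}\bigr)$. Because the number of subsimplices of $T$, the cardinalities $|I(S)|$, and the number of neighboring $n$-simplices are all bounded in terms of $n$, $r$, and the shape measure (via $C_{\rm N}(\calT)$), the triple sum is bounded by a constant multiple of this neighbor sum; collecting the three contributions with a renamed constant $C$ then yields the assertion. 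The only delicate point is precisely this bookkeeping---keeping track of the mesh-size exponents and verifying that the $n$-simplices produced by the face-connection arguments inside Lemma~\ref{lemma:standarderrorrepresentation} and by the choice of $T_{S}$ really lie in the one-ring of $T$ and have diameter comparable to $h_{T}$; there is no substantive analytic obstacle beyond the three lemmas and the standard mesh-regularity facts.
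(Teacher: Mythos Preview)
Your proposal is correct and follows exactly the paper's approach: the paper's own proof is the single line ``This is a combination of Lemmas~\ref{lemma:standarderrorrepresentation},~\ref{lemma:erroratboundary},~and~\ref{lemma:errorinbetween},'' and you have spelled out precisely that combination together with the bookkeeping on mesh-size exponents and neighbor counts. One remark: Lemma~\ref{lemma:erroratboundary} requires $\omega\in\calW^{p,p}\Lambda^{k}(\Omega,\Gamma)$, whereas the theorem as stated only assumes $\omega\in\calW^{p,p}\Lambda^{k}(\Omega)$; you implicitly acknowledge this by writing ``since $\omega$ satisfies the partial boundary conditions along $\Gamma$,'' which is the intended hypothesis (and is what the paper uses in all downstream applications), so the apparent mismatch is a minor imprecision in the statement rather than a gap in your argument.
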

\begin{proof}
    This is a combination of Lemmas~\ref{lemma:standarderrorrepresentation},~\ref{lemma:erroratboundary},~and~\ref{lemma:errorinbetween}.
\end{proof}

The main result above is as specific as we go without invoking specific properties 
of the finite element spaces. The two families have slightly different convergence properties,
and state a more specific result in the following corollaries.

\begin{corollary} \label{corollary:mainresult}
    Suppose that $\calP\Lambda^{k}(\calT,\calU) = \calP_{r}\Lambda^{k}(\calT,\calU)$. 
    Let $p \in [1,\infty]$ and $m,l,s \in [0,\infty)$ with $m \leq r+1$ and $m-1 \leq l \leq r$. 
    For any $T \in \calT$ and $\omega \in \calW^{p,p}\Lambda^{k}(\Omega) \cap W^{m,p}\Lambda^{k}(\Omega)$
    we have 
    \begin{align*}
        &
        \left| \omega - \Projection \omega \right|_{W^{s,p}\Lambda^{k}(T)}
        \leq 
        C
        h_{T}^{m-s}
        \sum_{ \substack{ T' \in \Delta_{n}(\calT) \\ T' \cap T \neq \emptyset } }
        \left( 
            \left| \omega \right|_{W^{m,p}\Lambda^{k}(T')} 
            + 
            h_{T}^{l+1-m}
            \left| \exteriorderivative \omega \right|_{W^{l,p}\Lambda^{k+1}(T')}  
        \right)
        . 
    \end{align*}
    Here, $C > 0$ depends only on $s$, $m$, $l$, $r$, $n$ and the mesh regularity. 
\end{corollary}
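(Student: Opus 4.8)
The plan is to deduce this corollary directly from Theorem~\ref{theorem:mainresult} by feeding the polynomial approximation estimates of Proposition~\ref{prop:localprojection} into each of the local projections $\Proj_{T}$ and $\Proj_{T'}$ that appear on the right-hand side of that theorem. One may assume at the outset that $\exteriorderivative\omega \in W^{l,p}\Lambda^{k+1}(\Omega)$, since otherwise $|\exteriorderivative\omega|_{W^{l,p}\Lambda^{k+1}(T')} = \infty$ for the neighbours $T'$ of $T$ and the asserted inequality is trivially true; so from now on all the seminorms occurring are finite.

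First I would bound the leading term: since $\mathcal P\Lambda^{k}(T) = \mathcal P_{r}\Lambda^{k}(T)$ and $s \leq m \leq r+1$, estimate~\eqref{math:polyapproximation:full} yields $|\omega - \Proj_{T}\omega|_{W^{s,p}\Lambda^{k}(T)} \leq C_{\Pi}\, h_{T}^{m-s}\, |\omega|_{W^{m,p}\Lambda^{k}(T)}$, which is absorbed into the sum over the neighbours of $T$ by taking $T' = T$. Next, for each $n$-simplex $T'$ with $T' \cap T \neq \emptyset$, I would apply~\eqref{math:polyapproximation:full} with the smoothness exponent $0$ in place of $s$ to obtain $\|\omega - \Proj_{T'}\omega\|_{L^{p}\Lambda^{k}(T')} \leq C_{\Pi}\, h_{T'}^{m}\, |\omega|_{W^{m,p}\Lambda^{k}(T')}$, and apply~\eqref{math:polyapproximation:exteriorderivative}, which is legitimate since $\omega \in \calW^{p,p}\Lambda^{k}(\Omega)$ and $l \leq r$, to obtain $\|\exteriorderivative\omega - \exteriorderivative\Proj_{T'}\omega\|_{L^{p}\Lambda^{k+1}(T')} \leq C_{\Pi}\, h_{T'}^{l}\, |\exteriorderivative\omega|_{W^{l,p}\Lambda^{k+1}(T')}$.

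Then I would substitute these into the estimate of Theorem~\ref{theorem:mainresult} and do the bookkeeping on powers of $h_{T}$. Because adjacent simplices have comparable diameters, with ratio bounded by $C_{\rm Q}(\calT)$, every factor $h_{T'}$ can be replaced by $h_{T}$ up to a constant depending only on $\mu(\calT)$, and because at most $C_{\rm N}(\calT)$ simplices meet $T$ the number of summands is controlled. The $\|\omega - \Proj_{T'}\omega\|_{L^{p}}$-contribution then becomes $h_{T}^{-s}\cdot h_{T'}^{m}\,|\omega|_{W^{m,p}\Lambda^{k}(T')} \simeq h_{T}^{m-s}\,|\omega|_{W^{m,p}\Lambda^{k}(T')}$, while the exterior-derivative contribution carries the prefactor $h_{T}^{-s}\cdot h_{T}\cdot h_{T'}^{l} \simeq h_{T}^{\,l+1-s} = h_{T}^{m-s}\cdot h_{T}^{\,l+1-m}$, which is exactly the weight in the claimed inequality; the hypothesis $m-1 \leq l$ ensures $l+1-m \geq 0$, so that this term genuinely appears as a higher-order correction to $h_{T}^{m-s}$. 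Collecting the constants from Theorem~\ref{theorem:mainresult}, Proposition~\ref{prop:localprojection}, and $C_{\rm Q}(\calT)$, $C_{\rm N}(\calT)$ produces the final constant $C$ depending only on $s$, $m$, $l$, $r$, $n$, and the mesh regularity. I do not anticipate a genuine obstacle; the only points that require care are verifying that the hypotheses of Proposition~\ref{prop:localprojection} are met at each use ($s \leq m \leq r+1$ for~\eqref{math:polyapproximation:full}, and $l \leq r$ with $\omega \in \calW^{p,p}\Lambda^{k}(\Omega)$ for~\eqref{math:polyapproximation:exteriorderivative}) and checking that the various powers of $h_{T}$ recombine into the single scaling factor $h_{T}^{m-s}$ with the derivative term carried as $h_{T}^{l+1-m}$.
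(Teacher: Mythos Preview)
Your proposal is correct and follows exactly the route the paper takes: the paper's own proof is the one-liner ``This follows from Theorem~\ref{theorem:mainresult} and Proposition~\ref{prop:localprojection},'' and you have simply unpacked that, inserting \eqref{math:polyapproximation:full} into the first term and the $L^{p}$-terms and \eqref{math:polyapproximation:exteriorderivative} into the exterior-derivative terms, then harmonizing the diameters via mesh regularity. The only implicit assumption you make that is not literally written in the corollary is $s \leq m$ (needed for \eqref{math:polyapproximation:full}); this is clearly intended by the paper as well, so there is no discrepancy in approach.
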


\begin{corollary} \label{corollary:mainresult:trimmed} 
    Suppose that $\calP\Lambda^{k}(\calT,\calU) = \calP^{-}_{r}\Lambda^{k}(\calT,\calU)$. 
    Let $p \in [1,\infty]$ and $l,m,s \in [0,\infty)$ with $m \leq r$ and $m-1 \leq l \leq m$. 
    For any $T \in \calT$
    and $\omega \in \calW^{p,p}\Lambda^{k}(\Omega) \cap W^{m,p}\Lambda^{k}(\Omega)$
    with $\exteriorderivative\omega \in W^{m,p}\Lambda^{k+1}(\Omega)$
    we have 
    \begin{align*}
        &
        \left| \omega - \Projection \omega \right|_{W^{s,p}\Lambda^{k}(T)}
\leq 
        C
        h_{T}^{m-s}
        \sum_{ \substack{ T' \in \Delta_{n}(\calT) \\ T' \cap T \neq \emptyset } }
        \left( 
            \left| \omega \right|_{W^{m,p}\Lambda^{k}(T')} 
            + 
            h_{T}
            \left| \exteriorderivative \omega \right|_{W^{m,p}\Lambda^{k+1}(T')}  
        \right)
        . 
    \end{align*}
    Here, $C > 0$ depends only on $s$, $m$, $l$, $r$, $n$ and the mesh regularity. 
\end{corollary}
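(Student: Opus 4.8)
The proof amounts to inserting the local polynomial approximation bounds of Proposition~\ref{prop:localprojection} into the abstract estimate of Theorem~\ref{theorem:mainresult} and then absorbing the geometric constants. First I would apply Theorem~\ref{theorem:mainresult} with the given $p$ and $s$; this is legitimate since $\omega \in \calW^{p,p}\Lambda^{k}(\Omega)$, and it bounds $\left|\omega - \Projection\omega\right|_{W^{s,p}\Lambda^{k}(T)}$ by the consistency term $\left|\omega - \Proj_{T}\omega\right|_{W^{s,p}(T)}$ plus $C\, h_{T}^{-s}$ times a sum, over the finitely many $T' \in \Delta_{n}(\calT)$ meeting $T$, of $\|\omega - \Proj_{T'}\omega\|_{L^{p}\Lambda^{k}(T')} + h_{T}\|\exteriorderivative\omega - \exteriorderivative\Proj_{T'}\omega\|_{L^{p}\Lambda^{k+1}(T')}$.

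Next I would estimate each piece using Proposition~\ref{prop:localprojection} in the trimmed case $\calP\Lambda^{k}(T') = \calP^{-}_{r}\Lambda^{k}(T')$. For the consistency term, estimate~\eqref{math:polyapproximation:trimmed} with smoothness $m$ and target order $s$ --- admissible because $m \leq r$ and $s \leq m$ by hypothesis --- gives $\left|\omega - \Proj_{T}\omega\right|_{W^{s,p}\Lambda^{k}(T)} \leq C_{\Pi} h_{T}^{m-s}\left|\omega\right|_{W^{m,p}\Lambda^{k}(T)}$. For the terms $\|\omega - \Proj_{T'}\omega\|_{L^{p}\Lambda^{k}(T')}$ I would use the same estimate with target order $0$, obtaining $C_{\Pi} h_{T'}^{m}\left|\omega\right|_{W^{m,p}\Lambda^{k}(T')}$. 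For the exterior-derivative terms I would invoke~\eqref{math:polyapproximation:exteriorderivative}: since $\exteriorderivative\omega \in W^{m,p}\Lambda^{k+1}(\Omega)$ and $m \leq r$, that estimate is available for any $l$ in the stated range $m-1 \leq l \leq m$, and choosing $l = m$ --- the endpoint that yields the $h_{T}$-factor in the assertion --- gives $\|\exteriorderivative\omega - \exteriorderivative\Proj_{T'}\omega\|_{L^{p}\Lambda^{k+1}(T')} \leq C_{\Pi} h_{T'}^{m}\left|\exteriorderivative\omega\right|_{W^{m,p}\Lambda^{k+1}(T')}$.

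Finally I would assemble the bounds: replace each $h_{T'}$ by $h_{T}$ at the cost of a power of the diameter-ratio constant $C_{\rm Q}(\calT)$, and bound the number of summands by $C_{\rm N}(\calT)$, both controlled by $\mu(\calT)$. Collecting powers of $h_{T}$ --- the consistency term and the $\omega$-terms carry $h_{T}^{m-s}$, and the $\exteriorderivative\omega$-terms carry $h_{T}^{-s} \cdot h_{T} \cdot h_{T}^{m} = h_{T}^{m-s} \cdot h_{T}$ --- produces exactly the claimed inequality. I do not expect a genuine obstacle: the only points needing care are that the trimmed family lowers the admissible ceiling to $m \leq r$ (one less than in Corollary~\ref{corollary:mainresult}), that the $h_{T}$-scaling on the $\exteriorderivative\omega$ term corresponds to the endpoint $l = m$ of the allowed window --- so the auxiliary parameter $l$ influences only the applicability of~\eqref{math:polyapproximation:exteriorderivative} --- and the routine bookkeeping of index ranges and of $h_{T}$-versus-$h_{T'}$ substitutions.
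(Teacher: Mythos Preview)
Your proposal is correct and follows exactly the route the paper takes: the paper's proof is the single line ``This follows from Theorem~\ref{theorem:mainresult} and Proposition~\ref{prop:localprojection},'' and you have spelled out precisely that combination, including the choice $l=m$ in~\eqref{math:polyapproximation:exteriorderivative} that produces the stated $h_{T}$-weight on the $\exteriorderivative\omega$ term. One small caveat: you invoke ``$s\leq m$ by hypothesis'' when applying~\eqref{math:polyapproximation:trimmed}, but the corollary as stated does not list that constraint explicitly; it is nonetheless needed (and tacitly assumed, as in the companion Corollary~\ref{corollary:mainresult}), so your use of it is appropriate.
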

\begin{proof}
    This follows from Theorem~\ref{theorem:mainresult} and Proposition~\ref{prop:localprojection}.
\end{proof}

\begin{remark}
 A close reading of the proof of Theorem~3.1 in \cite{scott1990finite}, specifically the last inequality on p.489, shows that the essential techniques for the broken Bramble-Hilbert lemma are already contained in the original contribution by Scott and Zhang. The inequality apparently was not recognized as a result in its own right. Veeser \cite{veeser2016approximating} identified the result as an instrument in nonlinear approximation theory and \cite{camacho2015L2} employed the inequality in the analysis of surface finite element methods. The motivation of the present research is closest in spirit the latter.
\end{remark}

We turn our attention to error estimates via Sobolev trace theory. 
This is different from the trace theory via an integration by parts formula,
and neither is a subset of the other.
The following analysis bears some similarity with Ciarlet's analysis of the Scott-Zhang operator \cite{ciarlet2013analysis}.
We rely on the trace inequality of Lemma~\ref{lemma:tracelemma}.

\begin{lemma} \label{lemma:erroratboundary:sobolevtracetheory}
    Let $p \in [1,\infty]$ and ${t} > 1/p$ or ${t} \geq 1$. 
    If $T \in \calT$ and $S \in \Delta(T)$ with $S \in \calU$,
    then for every $\omega \in W^{{t},p}\Lambda^{k}(\Omega,\Gamma)$ 
    we have 
    \begin{align*}
        | \phi^\ast_{S,i} \left( \Proj_{T} \omega \right) |
        \leq 
        C
        h_{S}^{ -\frac{n}{p} +k  }
        \Big( 
            \| \Proj_{S} \omega - \omega \|_{L^{p}\Lambda^{k}(T_{S})}
            +
            h_{S}^{{t}}
            \| \Proj_{S} \omega - \omega \|_{W^{{t},p}\Lambda^{k}(T_{S})}
        \Big)
        .
    \end{align*}
    Here, $C > 0$ depends only on $p$, $t$, $r$, $n$, and the mesh regularity. \end{lemma}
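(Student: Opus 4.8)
The plan is to follow the scheme of Lemma~\ref{lemma:erroratboundary} almost verbatim, but with the \emph{facet} representation of the degree of freedom from Theorem~\ref{theorem:roughdof} in place of the \emph{volume} representation, and with the Sobolev trace inequality of Lemma~\ref{lemma:tracelemma} in place of the integration-by-parts identity. As in the proof of Lemma~\ref{lemma:erroratboundary} I write $\Proj_{T_S}$ for the object denoted $\Proj_S$ in the statement and take the relevant cell to be $T_S$; the representative facet $F_S \in \calU$ fixed in Convention~\ref{sec:dofrepresentative} then lies in $\overline\Gamma$. First I would invoke the second identity of Theorem~\ref{theorem:roughdof}: since $\Proj_{T_S}\omega \in \calP_r\Lambda^k(T_S)$ is polynomial and $\Xi_{S,i} = \Xi_{T_S,F_S,S,i}$,
\begin{align*}
 \phi^{\ast}_{S,i}\!\left( \Proj_{T_S}\omega \right)
 =
 o(F_S,T_S)\int_{F_S} \trace_{T_S,F_S}\Xi_{S,i} \wedge \trace_{T_S,F_S}\!\left( \Proj_{T_S}\omega \right),
\end{align*}
which is a top-degree integral over $F_S$ since $(n-k-1)+k = n-1$.

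Next I would exploit the boundary condition. Because $t > 1/p$ or $t \ge 1$, the trace of $\omega$ onto $\partial\Omega$ is well defined by Theorem~\ref{theorem:trace}; by locality of the trace its restriction to $\mathrm{int}\, F_S$ agrees with $\trace_{T_S,F_S}\omega$, and this vanishes because $\omega \in W^{t,p}\Lambda^k(\Omega,\Gamma)$ and the trace of a $k$-form is precisely its tangential part. Hence $\int_{F_S} \trace_{T_S,F_S}\Xi_{S,i} \wedge \trace_{T_S,F_S}\omega = 0$, and subtracting this from the previous display yields
\begin{align*}
 o(F_S,T_S)\,\phi^{\ast}_{S,i}\!\left( \Proj_{T_S}\omega \right)
 =
 \int_{F_S} \trace_{T_S,F_S}\Xi_{S,i} \wedge \trace_{T_S,F_S}\!\left( \Proj_{T_S}\omega - \omega \right).
\end{align*}
Applying H\"older's inequality on $F_S$ with $1 = 1/p + 1/q$, bounding $\| \trace_{T_S,F_S}\Xi_{S,i} \|_{L^q\Lambda^{n-k-1}(F_S)}$ by the scaling estimate \eqref{theorem:roughdof:scaling:trace} (which gives a factor $h_S^{k-(n-1)/p}$), and bounding $\| \trace_{T_S,F_S}(\Proj_{T_S}\omega - \omega) \|_{L^p\Lambda^k(F_S)}$ by Lemma~\ref{lemma:tracelemma} applied to $\Proj_{T_S}\omega - \omega \in W^{t,p}\Lambda^k(T_S)$ (which gives $h_{F_S}^{-1/p}\| \Proj_{T_S}\omega - \omega \|_{L^p\Lambda^k(T_S)} + h_{F_S}^{t-1/p}| \Proj_{T_S}\omega - \omega |_{W^{t,p}\Lambda^k(T_S)}$), I obtain the product of these two bounds. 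Using $h_{F_S} \simeq h_S \simeq h_{T_S}$ (comparable diameters, controlled by $\mu(\calT)$ and $C_{\rm Q}(\calT)$) and $|\cdot|_{W^{t,p}} \le \|\cdot\|_{W^{t,p}}$, the exponents collapse: $h_S^{k-(n-1)/p}\cdot h_S^{-1/p} = h_S^{-n/p+k}$ in the first summand and $h_S^{k-(n-1)/p}\cdot h_S^{t-1/p} = h_S^{t}\, h_S^{-n/p+k}$ in the second, which is exactly the claimed inequality, with constant $\lesssim C_\Xi C_{\trace}$ times mesh-regularity factors.

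The only genuinely delicate point is the middle step: identifying the abstract trace $\trace_{T_S,F_S}\omega$ occurring in Theorem~\ref{theorem:roughdof} with the boundary trace whose vanishing on $\Gamma$ defines membership in $W^{t,p}\Lambda^k(\Omega,\Gamma)$. This is where the regularity hypothesis $t > 1/p$ or $t \ge 1$ is used (via Theorem~\ref{theorem:trace} and Lemma~\ref{lemma:tracelemma}), together with the fact that a simplex of $\calU$ is a boundary facet, so that $T_S$ is the unique $n$-simplex containing $F_S$ and $\Omega$ coincides with $\mathrm{int}\,T_S$ near $\mathrm{int}\,F_S$; a density argument then makes the two notions of trace agree. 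Everything else — the choice of $\Xi_{S,i}$, H\"older's inequality, and the collection of the scaling exponents — is routine bookkeeping, entirely parallel to the proof of Lemma~\ref{lemma:erroratboundary}.
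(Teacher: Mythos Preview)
Your proof is correct and follows essentially the same route as the paper: use the facet representation of $\phi^{\ast}_{S,i}$ from Theorem~\ref{theorem:roughdof}, invoke $\trace_{T_S,F_S}\omega = 0$ to replace $\Proj_{T_S}\omega$ by $\Proj_{T_S}\omega - \omega$ in the facet integral, then apply H\"older's inequality, the scaling bound~\eqref{theorem:roughdof:scaling:trace}, and the trace estimate of Lemma~\ref{lemma:tracelemma}. You are in fact more careful than the paper about the identification of the abstract simplex trace with the boundary trace defining $W^{t,p}\Lambda^{k}(\Omega,\Gamma)$; the paper simply asserts $\trace_{T_S,F_S}\omega = 0$ without comment.
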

\begin{proof}    
    By assumption, $\trace_{T_{S},F_{S}} \omega = 0$.
We use the representation of the degrees of freedom in Theorem~\ref{theorem:roughdof}:
    \begin{align*}
        \phi^\ast_{S,i} \left( \Proj_{T} \omega \right)
        &=
        \int_{F_{S}} \mathring\xi_{S,i} \wedge \trace_{T_{S},F_{S}} ( \Proj_{S} \omega )_{|T_{S}}
        \\&=
        \int_{F_{S}} \mathring\xi_{S,i} \wedge \trace_{T_{S},F_{S}} ( \omega - \Proj_{S} \omega )_{|T_{S}}
        .
    \end{align*}
    Let $q \in [1,\infty]$ such that $1 = 1/p + 1/{q}$.
    We utilize H\"older's inequality 
    and obtain:
    \begin{align*}
        \left| \phi^\ast_{S,i} \left( \Proj_{T} \omega \right) \right|
        &\leq 
        \| \mathring\xi_{S,i} \|_{L^{{q}}\Lambda^{n-k}(F_{S})} 
        \| \omega - \Proj_{S} \omega \|_{L^{p}\Lambda^{k}(F_{S})}
        \\&\leq 
        C_{\Xi} h_{S}^{ \frac{n-1}{q} - n+k+1 }
        \| \omega - \Proj_{S} \omega \|_{L^{p}\Lambda^{k}(F_{S})}
        \\&\leq 
        C_{\Xi} C_{\trace}
        h_{T_{S}}^{ \frac{n-1}{q} - n+k+1 }
        \left(
            h_{T}^{-\frac 1 p}
            \| \omega \|_{L^{p}\Lambda^{k}(T)}
            +
            h_{T}^{{t}-\frac 1 p}
            \left| \omega \right|_{W^{{t},p}\Lambda^{k}(T)}
        \right)
        .
    \end{align*}
    Here, we have used the inverse inequality \eqref{theorem:roughdof:scaling:trace} and Lemma~\ref{lemma:tracelemma}.
\end{proof}
\begin{lemma} \label{lemma:errorinbetween:sobolevtracetheory}
    Let $p \in [1,\infty]$ and ${t} > 1/p$ or ${t} \geq 1$. 
    If $T_{1}, T_{2} \in \calT$ share a common facet, 
    $S \in \Delta(T_{1}) \cap \Delta(T_{2})$, and $i \in I(S)$,
    then for every $\omega \in W^{{t},p}\Lambda^{k}(\Omega)$ 
    we have 
    \begin{align*}
        &
        | \phi^\ast_{S,i} \left( \Proj_{T_{1}} \omega \right) - \phi^\ast_{S,i} \left( \Proj_{T_{2}} \omega \right) |
        \\&\quad
        \leq 
        h_{S}^{ -\frac{n}{p}+k }
        C 
        \bigg( 
            \| \omega - \Proj_{T_{1}} \omega \|_{L^{p}\Lambda^{k}(T_{1})} 
            + 
            h_{S}^{{t}}
            | \omega - \Proj_{T_{1}} \omega |_{W^{{t},p}\Lambda^{k}(T_{1})}  
            \\&\quad
            \qquad\qquad\qquad
            +
            \| \omega - \Proj_{T_{2}} \omega \|_{L^{p}\Lambda^{k}(T_{2})}  
            + 
            h_{S}^{{t}}
            | \omega - \Proj_{T_{2}} \omega |_{W^{{t},p}\Lambda^{k}(T_{2})}  
        \bigg) 
        .
    \end{align*}
    Here, $C > 0$ depends only on $p$, $t$, $r$, $n$, and the mesh regularity. \end{lemma}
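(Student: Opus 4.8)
The plan is to follow the proof of Lemma~\ref{lemma:errorinbetween} almost verbatim, but to work with the \emph{facet} representation of the degrees of freedom from Theorem~\ref{theorem:roughdof} in place of the volume representation, and then to control the resulting boundary integral by the trace inequality of Lemma~\ref{lemma:tracelemma} rather than by H\"older's inequality on the cell. Let $F = T_{1} \cap T_{2}$ be the common facet; since $S \in \Delta(T_{1}) \cap \Delta(T_{2})$ we have $S \subseteq F$. Because ${t} > 1/p$ or ${t} \geq 1$, Lemma~\ref{lemma:tracelemma} guarantees that $\omega$ has a well-defined trace on $F$, which is the same whether computed from $T_{1}$ or from $T_{2}$.

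First I would invoke the facet representation
\begin{align*}
 \phi^{\ast}_{S,i}\left( \Proj_{T_{j}}\omega \right)
 =
 o(F,T_{j}) \int_{F} \trace_{T_{j},F}\Xi_{T_{j},F,S,i} \wedge \trace_{T_{j},F}\Proj_{T_{j}}\omega ,
 \qquad j = 1,2 ,
\end{align*}
which is legitimate since $\Proj_{T_{j}}\omega$ is polynomial, and split $\trace_{T_{j},F}\Proj_{T_{j}}\omega = \trace_{T_{j},F}\omega + \trace_{T_{j},F}( \Proj_{T_{j}}\omega - \omega )$. The two terms involving $\omega$ that arise this way are the facet representation of the degree of freedom, now applied to $\omega$ itself rather than to $\Proj_{T_{j}}\omega$, computed through $T_{1}$ and through $T_{2}$ respectively; they agree, because this facet functional depends only on the two-sided trace of $\omega$ on $F$ -- this is the same compatibility that makes the glued form $\Xi_{F,S,i}$ lie in $\calW^{\infty,\infty}\Lambda^{n-k-1}(\Omega)$ in the proof of Lemma~\ref{lemma:errorinbetween}. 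Carrying out the orientation bookkeeping exactly as there (recall $o(F,T_{1}) = -o(F,T_{2})$), the $\omega$-terms cancel from the difference and one is left with
\begin{align*}
 &
 \phi^{\ast}_{S,i}\left( \Proj_{T_{1}}\omega \right) - \phi^{\ast}_{S,i}\left( \Proj_{T_{2}}\omega \right)
 \\&\quad
 =
 o(F,T_{1}) \sum_{j=1,2} \int_{F} \trace_{T_{j},F}\Xi_{T_{j},F,S,i} \wedge \trace_{T_{j},F}\left( \Proj_{T_{j}}\omega - \omega \right) .
\end{align*}

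The remaining estimate is routine. Letting $q \in [1,\infty]$ be conjugate to $p$, H\"older's inequality on $F$ bounds the $j$-th summand by $\| \trace_{T_{j},F}\Xi_{T_{j},F,S,i} \|_{L^{q}\Lambda^{n-k-1}(F)} \cdot \| \trace_{T_{j},F}( \Proj_{T_{j}}\omega - \omega ) \|_{L^{p}\Lambda^{k}(F)}$. The first factor is at most $C_{\Xi} h_{S}^{(n-1)/q - n + k + 1}$ by the inverse inequality~\eqref{theorem:roughdof:scaling:trace}; for the second I would apply the trace inequality of Lemma~\ref{lemma:tracelemma} to $\Proj_{T_{j}}\omega - \omega \in W^{{t},p}\Lambda^{k}(T_{j})$, which gives a bound by $C_{\trace}( h_{F}^{-1/p} \| \Proj_{T_{j}}\omega - \omega \|_{L^{p}\Lambda^{k}(T_{j})} + h_{F}^{{t}-1/p} | \Proj_{T_{j}}\omega - \omega |_{W^{{t},p}\Lambda^{k}(T_{j})} )$. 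Using $1/p + 1/q = 1$, and that $h_{S}$, $h_{F}$, $h_{T_{j}}$ are mutually comparable by mesh regularity, the powers combine to $h_{S}^{-n/p+k}$ and $h_{S}^{-n/p+k+{t}}$ respectively, and summing over $j = 1,2$ produces the asserted inequality. The only delicate step is the cancellation of the $\omega$-terms, i.e.\ that the facet representation of $\phi^{\ast}_{S,i}$ attached to $T_{1}$ coincides with the one attached to $T_{2}$ on forms possessing a two-sided trace on $F$; this is the Sobolev-trace analogue of the gluing argument in Lemma~\ref{lemma:errorinbetween}, and everything else is bookkeeping of orientation signs and powers of the mesh size.
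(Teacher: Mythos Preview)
Your argument is correct and follows the same route as the paper: use the facet representation from Theorem~\ref{theorem:roughdof}, cancel the $\omega$-contributions because the facet functional does not depend on the side from which it is computed, then apply H\"older on $F$, the inverse inequality~\eqref{theorem:roughdof:scaling:trace}, and the trace Lemma~\ref{lemma:tracelemma}. One cosmetic slip: in your final displayed identity the orientation factor should stay inside the sum as $o(F,T_{j})$ (or equivalently the $j=2$ term should carry a minus sign relative to the $j=1$ term), since $o(F,T_{1})=-o(F,T_{2})$; this is irrelevant once you pass to absolute values.
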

\begin{proof}
    Let $F \in \calT$ be the common facet of $T_{1}$ and $T_{2}$. 
    The differential form $\omega$ satisfies $\trace_{T_{1},F} \omega = \trace_{T_{2},F} \omega$. 
    For $j \in \{1,2\}$ 
    we notice
    \begin{align*}
        \phi^\ast_{S,i} \left( \Proj_{T_{j}} \omega \right)
        &=
        o(F,T_{j})
        \int_{F} 
        \trace_{T_{j},F} \Xi_{T_{j},F,S,i} \wedge \trace_{T_{j},F} \Proj_{T_{j}} \omega 
        \\&\quad 
        =
        o(F,T_{j})
        \int_{F} 
        \trace_{T_{j},F} \Xi_{T_{j},F,S,i} \wedge \trace_{T_{j},F} \left( \Proj_{T_{j}} \omega - \omega \right) 
        .
    \end{align*}
Next, via H\"olders inequality and the inverse inequality \eqref{theorem:roughdof:scaling:trace},
    \begin{align*}
     &
     \left| \int_{F} \mathring\xi_{T_{j},F,S,i} \wedge \trace_{T_{j},F} \left( \Proj_{T_{j}} \omega - \omega \right) \right|
\leq 
     C_{\Xi} h_{S}^{n - \frac n p - \frac 1 q - k} 
     \left\| \Proj_{T_{j}} \omega - \omega \right\|_{L^{p}\Lambda^{k}(F)}
     .
    \end{align*}
    The inequality in Lemma~\ref{lemma:tracelemma} completes the proof. 
\end{proof}

\begin{theorem} \label{theorem:mainresult:sobolevtracetheory}
    Let $p \in [1,\infty]$, ${t}, s \in [0,\infty)$ 
    with ${t} > 1/p$ or ${t} \geq 1$. 
    For every $T \in \calT$ and $\omega \in W^{{t},p}\Lambda^{k}(\Omega)$ we have 
    \begin{align*}
        &
        \left| \omega - \Projection \omega \right|_{W^{s,p}\Lambda^{k}(T)}
        \\&
        \leq 
        \left| \omega - \Proj_{T} \omega \right|_{W^{s,p}(T)}
        \\&\qquad
        +
        C
        \sum_{ \substack{ T' \in \Delta_{n}(\calT) \\ T' \cap T \neq \emptyset } }
        \left( 
            h_{T}^{-s}
            \| \omega - \Proj_{T'} \omega \|_{L^{p}\Lambda^{k}(T')} 
            + 
            h_{T}^{{t}-s}
            \left| \omega - \Proj_{T'} \omega \right|_{W^{{t},p}\Lambda^{k}(T')}  
        \right)
        . 
    \end{align*}
    Here, $C > 0$ depends only on $p$, $s$, ${t}$, $r$, $n$ and the mesh regularity. 
\end{theorem}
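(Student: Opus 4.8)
The plan is to follow the proof of Theorem~\ref{theorem:mainresult} line by line, substituting the integration-by-parts estimates of Lemmas~\ref{lemma:erroratboundary} and~\ref{lemma:errorinbetween} with their Sobolev-trace-theory analogues, Lemmas~\ref{lemma:erroratboundary:sobolevtracetheory} and~\ref{lemma:errorinbetween:sobolevtracetheory}. First I would apply the standard error representation of Lemma~\ref{lemma:standarderrorrepresentation} with the exponents $p$ and $s$ of the statement. This bounds $\left| \omega - \Projection \omega \right|_{W^{s,p}\Lambda^{k}(T)}$ by the purely local projection error $\left| \omega - \Proj_{T} \omega \right|_{W^{s,p}\Lambda^{k}(T)}$, plus a $h_{T}^{\frac{n}{p} - k - s}$-weighted sum of the inter-element jumps $\left| \phi_{S,i}^{\ast}(\Proj_{T_{1}}\omega) - \phi_{S,i}^{\ast}(\Proj_{T_{2}}\omega) \right|$ over $S \in \Delta(T) \setminus \calU$, $i \in I(S)$, and pairs $T_{1}, T_{2} \in \supersimplices_{n}(\calT,S)$ with $T_{1} \cap T_{2} \in \Delta_{n-1}(T)$, plus a $h_{T}^{\frac{n}{p} - k - s}$-weighted sum of the boundary terms $\left| \phi_{S,i}^{\ast}(\Proj_{T_{S}}\omega) \right|$ over $S \in \Delta(T) \cap \calU$ and $i \in I(S)$.

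Next I would estimate the jump terms with Lemma~\ref{lemma:errorinbetween:sobolevtracetheory} and the boundary terms with Lemma~\ref{lemma:erroratboundary:sobolevtracetheory}. Both rest on the trace inequality of Lemma~\ref{lemma:tracelemma}, whose hypothesis $t > 1/p$ or $t \geq 1$ is precisely the regularity assumed here; the boundary estimate additionally exploits that the trace of $\omega$ along $\Gamma$ vanishes, so --- exactly as in Theorem~\ref{theorem:mainresult} --- the boundary sum contributes only when $\omega$ obeys the homogeneous partial boundary conditions, and it drops out when $\calU = \emptyset$. After this step each jump term is bounded by $C\, h_{S}^{-\frac{n}{p} + k}$ times $\| \omega - \Proj_{T_{j}}\omega \|_{L^{p}\Lambda^{k}(T_{j})} + h_{S}^{t} \left| \omega - \Proj_{T_{j}}\omega \right|_{W^{t,p}\Lambda^{k}(T_{j})}$ for $j \in \{1,2\}$, and each boundary term by $C\, h_{S}^{-\frac{n}{p} + k}$ times the analogous quantity over $T_{S}$, lower-order contributions being absorbed into the $L^{p}$-term.

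Then I would collect. Since $T_{1}$, $T_{2}$, and $T_{S}$ all intersect $T$, they appear among the $T'$ in the sum $\sum_{T' \cap T \neq \emptyset}$; using that adjacent simplices have uniformly comparable diameters (ratio bounded by $C_{\rm Q}(\calT)$), the product of the weight $h_{T}^{\frac{n}{p} - k - s}$ with the factor $h_{S}^{-\frac{n}{p} + k}$ becomes $h_{T}^{-s}$, and $h_{S}^{t}$ becomes $h_{T}^{t}$. The number of subsimplices of $T$, of indices in $I(S)$, of admissible pairs $(T_{1},T_{2})$, and of cells meeting $T$ are all bounded through the shape measure (via $C_{\rm N}(\calT)$ and the bounded length of face-connections), so summing the finitely many contributions gives the asserted inequality with $C$ depending only on $p$, $s$, $t$, $r$, $n$, and the mesh regularity.

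I do not expect a genuine obstacle: once the three lemmas are in hand, the argument is bookkeeping of scaling exponents combined with the usual mesh-regularity counting. The one point that truly needs care is the regularity threshold --- the whole construction hinges on Lemma~\ref{lemma:tracelemma}, which is why the hypothesis reads $t > 1/p$ or $t \geq 1$ and not merely $t \in [0,\infty)$ --- together with the companion fact that the boundary contribution is controllable only because the trace of $\omega$ along $\Gamma$ vanishes, making the estimate genuinely one for forms satisfying the homogeneous partial boundary conditions.
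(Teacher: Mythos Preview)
Your proposal is correct and follows exactly the approach the paper intends: invoke Lemma~\ref{lemma:standarderrorrepresentation} and then bound the jump and boundary terms with the Sobolev-trace-theory Lemmas~\ref{lemma:errorinbetween:sobolevtracetheory} and~\ref{lemma:erroratboundary:sobolevtracetheory}, collecting the scaling factors via mesh regularity. The paper's one-line proof actually cites Lemmas~\ref{lemma:erroratboundary} and~\ref{lemma:errorinbetween}, but this is evidently a copy-paste slip from the proof of Theorem~\ref{theorem:mainresult}; your remark that the boundary contribution is controlled only under the homogeneous trace condition $\omega \in W^{t,p}\Lambda^{k}(\Omega,\Gamma)$ is well taken and indeed an implicit hypothesis that the paper's statement glosses over.
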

\begin{proof}
    This is a combination of Lemmas~\ref{lemma:standarderrorrepresentation},~\ref{lemma:erroratboundary},~and~\ref{lemma:errorinbetween}.
\end{proof}

\begin{corollary} \label{corollary:mainresult:sobolevtracetheory}
    Let $p \in [1,\infty]$ and $m, s \in [0,\infty)$ with $s \leq m$.
If $\calP\Lambda^{k}(\calT,\calU) = \calP_{r}\Lambda^{k}(\calT,\calU)$,
    suppose that $m \leq r+1$. 
    If $\calP\Lambda^{k}(\calT,\calU) = \calP^{-}_{r}\Lambda^{k}(\calT,\calU)$,
    suppose that $m \leq r$. 				
Suppose that $m > 1/p$ or $m \geq 1$. 
    For every $T \in \calT$ and $\omega \in W^{m,p}\Lambda^{k}(\Omega)$ we have 
    \begin{align*}
        &
        \left| \omega - \Projection \omega \right|_{W^{s,p}\Lambda^{k}(T)}
        \leq 
        C
        h_{T}^{m-s}
        \sum_{ \substack{ T' \in \Delta_{n}(\calT) \\ T' \cap T \neq \emptyset } }
        \left| \omega \right|_{W^{m,p}\Lambda^{k}(T')} 
        . 
    \end{align*}
    Here, $C > 0$ depends only on $p$, $s$, $m$, $r$, $n$ and the mesh regularity. 
\end{corollary}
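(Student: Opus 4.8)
The plan is to obtain this corollary as a bookkeeping specialization of Theorem~\ref{theorem:mainresult:sobolevtracetheory}, feeding the elementwise polynomial approximation estimates of Proposition~\ref{prop:localprojection} into its right-hand side. First I would apply Theorem~\ref{theorem:mainresult:sobolevtracetheory} with the smoothness parameter ${t}=m$: the standing hypothesis ``$m>1/p$ or $m\geq 1$'' is exactly what that theorem requires, and $\omega\in W^{m,p}\Lambda^{k}(\Omega)$ lies in its domain. This bounds $|\omega-\Projection\omega|_{W^{s,p}\Lambda^{k}(T)}$ by $|\omega-\Proj_{T}\omega|_{W^{s,p}\Lambda^{k}(T)}$ plus $C$ times a sum over the neighbors $T'\in\Delta_{n}(\calT)$ with $T'\cap T\neq\emptyset$ of the two terms $h_{T}^{-s}\|\omega-\Proj_{T'}\omega\|_{L^{p}\Lambda^{k}(T')}$ and $h_{T}^{m-s}|\omega-\Proj_{T'}\omega|_{W^{m,p}\Lambda^{k}(T')}$.

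Next I would estimate each of these three pieces with Proposition~\ref{prop:localprojection}. For the leading term, the bound \eqref{math:polyapproximation:full} in the full-polynomial family (valid since $m\leq r+1$) or \eqref{math:polyapproximation:trimmed} in the trimmed family (valid since $m\leq r$), together with $s\leq m$, gives $|\omega-\Proj_{T}\omega|_{W^{s,p}\Lambda^{k}(T)}\leq C_{\Pi}h_{T}^{m-s}|\omega|_{W^{m,p}\Lambda^{k}(T)}$, and $T$ itself is one of the admissible $T'$. For each neighbor $T'$, the same proposition applied with the Sobolev index set to $0$ yields $\|\omega-\Proj_{T'}\omega\|_{L^{p}\Lambda^{k}(T')}\leq C_{\Pi}h_{T'}^{m}|\omega|_{W^{m,p}\Lambda^{k}(T')}$, and applied with the index set to $m$ it yields $|\omega-\Proj_{T'}\omega|_{W^{m,p}\Lambda^{k}(T')}\leq C_{\Pi}|\omega|_{W^{m,p}\Lambda^{k}(T')}$. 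Since adjacent simplices have comparable diameters (the ratio being controlled by $C_{\rm Q}(\calT)$), I would replace every $h_{T'}$ by $h_{T}$ at the cost of a mesh-dependent factor, so that the first neighbor term becomes $h_{T}^{-s}\cdot C\,h_{T}^{m}|\omega|_{W^{m,p}\Lambda^{k}(T')}=C\,h_{T}^{m-s}|\omega|_{W^{m,p}\Lambda^{k}(T')}$ and the second is already of the form $C\,h_{T}^{m-s}|\omega|_{W^{m,p}\Lambda^{k}(T')}$. Summing over the at most $C_{\rm N}(\calT)$ neighbors of $T$ and collecting all constants into a single $C=C(p,s,m,r,n,\mu(\calT))$ gives the asserted estimate.

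I do not anticipate a real obstacle here; the argument is essentially routine once Theorem~\ref{theorem:mainresult:sobolevtracetheory} and Proposition~\ref{prop:localprojection} are in hand. The only points that demand a little care are, first, verifying that the degree restrictions ($m\leq r+1$ for case (A), $m\leq r$ for case (B)) together with $s\leq m$ are precisely the hypotheses under which Proposition~\ref{prop:localprojection} delivers the needed elementwise bounds, so that no inverse estimate in the wrong direction is invoked; and second, being consistent about which cell's diameter enters, i.e.\ uniformly converting each $h_{T'}$ into $h_{T}$ via the shape-regularity bound on ratios of adjacent diameters before summing. If one instead wanted the constant to be completely uniform in $p$ one could note that all invoked constants ($C_{\Pi}$, $C_{\rm Q}$, $C_{\rm N}$, and the constant from Theorem~\ref{theorem:mainresult:sobolevtracetheory}) already have the stated dependencies, so no further work is needed.
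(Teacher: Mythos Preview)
Your proposal is correct and follows exactly the approach indicated in the paper, which simply states that the corollary follows from Theorem~\ref{theorem:mainresult:sobolevtracetheory} and Proposition~\ref{prop:localprojection}. You have merely spelled out the routine details of that combination (choosing $t=m$, applying the elementwise bounds with the appropriate Sobolev indices, and replacing $h_{T'}$ by $h_T$ via shape regularity), all of which are accurate.
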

\begin{proof}
    This follows from Theorem~\ref{theorem:mainresult:sobolevtracetheory} and Proposition~\ref{prop:localprojection}.
\end{proof}

We can also show an estimate for lower regularity differential forms
via our standard error representation.
We need a modicum of additional notation before we formulate that result. 

For any facet $F \in \Delta_{n-1}(\calT)$ we let $D_{F}$ denote the polyhedral domain 
that is described by the $n$-dimensional simplices of the triangulation that contain $F$.
The situation is simple. 
If $F$ a facet at the boundary, then there is only simplex $T$ containing $F$ and hence $\overline{D_{F}}=T$. 
If instead $F$ is an interior facet, then there are exactly two simplices $T_{1}, T_{2} \in \calT$
that describe contain $F$, and hence $\overline{D_{F}} = T_{1} \cup T_{2}$.
We write $\calP_{r}\Lambda^{k}(D_{F})$ for the space of polynomial $k$-forms of degree $r$ 
over the domain $D_{F}$.

\begin{lemma} \label{lemma:lowregularityestimate}
    Let $p \in [1,\infty]$ and $s \in [0,\infty) $. 
    If $T_{1}, T_{2} \in \calT$ share a common facet $F$, 
    $S \in \Delta(F)$, and $i \in I(S)$,
    then for every $\omega \in L^{p}\Lambda^{k}(\Omega)$ 
    and every $\tilde\omega \in \calP_{r}\Lambda^{k}(D_{F})$
    we have 
    \begin{align*}
        &
        \left| \phi^\ast_{S,i} \left( \Proj_{T_{1}} \omega \right) - \phi^\ast_{S,i} \left( \Proj_{T_{2}} \omega \right) \right|
        \\&
        \leq
        C
        h_{F}^{ - \frac n p + k + s}
        \left( 
            \left| \omega - \Proj_{T_{1}} \omega \right|_{W^{s,p}\Lambda^{k}(T_{1})} 
            +
            \left| \omega - \Proj_{T_{2}} \omega \right|_{W^{s,p}\Lambda^{k}(T_{2})} 
            +
            \left| \omega - \tilde      \omega \right|_{W^{s,p}\Lambda^{k}(D_{F})} 
        \right)
        .
    \end{align*}
    Here, $C > $ depends only on $p$, $s$, $n$, and the mesh regularity.
\end{lemma}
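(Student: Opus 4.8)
The plan is to exploit that the quantity $\phi^\ast_{S,i}\!\left(\Proj_{T_1}\omega\right)-\phi^\ast_{S,i}\!\left(\Proj_{T_2}\omega\right)$ is insensitive to subtracting from both arguments the restriction of any single polynomial over the patch $D_F$, and then to convert the resulting polynomial estimate into the scaled Sobolev--Slobodeckij seminorms. First I would record this \emph{cancellation property}. Since $T_1$ and $T_2$ share the facet $F$, we have $T_1\cap T_2=F$ and $S\in\Delta(F)\subseteq\Delta(T_1)\cap\Delta(T_2)$; moreover $\phi^\ast_{S,i}$ lies in $\mathcal C\Lambda^{k}(S)$, so it depends on its argument only through the trace onto $S$. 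Hence for every $q\in\calP_{r}\Lambda^{k}(D_{F})$ the traces of $q_{|T_1}$ and $q_{|T_2}$ onto $S$ agree, and therefore $\phi^\ast_{S,i}(q_{|T_1})=\phi^\ast_{S,i}(q_{|T_2})$. Choosing $q=\tilde\omega+p$ with a polynomial $p\in\calP_{r}\Lambda^{k}(D_{F})$ yet to be fixed, this yields
\begin{align*}
 \phi^\ast_{S,i}\!\left(\Proj_{T_1}\omega\right)-\phi^\ast_{S,i}\!\left(\Proj_{T_2}\omega\right)
 =
 \phi^\ast_{S,i}\!\left(\Proj_{T_1}\omega-\tilde\omega-p\right)
 -
 \phi^\ast_{S,i}\!\left(\Proj_{T_2}\omega-\tilde\omega-p\right).
\end{align*}

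Next I would estimate the two terms on the right. Each form $\Proj_{T_j}\omega-\tilde\omega-p$ lies in $\calP_{r}\Lambda^{k}(T_j)$, so the inverse inequality~\eqref{theorem:biorthogonal:boundphi:dual} with smoothness index $0$ bounds $|\phi^\ast_{S,i}(\Proj_{T_j}\omega-\tilde\omega-p)|$ by $C h_{S}^{k-\frac n p}\,\|\Proj_{T_j}\omega-\tilde\omega-p\|_{L^{p}\Lambda^{k}(T_j)}$. I then split $\Proj_{T_j}\omega-\tilde\omega-p=(\Proj_{T_j}\omega-\omega)+\big((\omega-\tilde\omega)-p\big)$ and treat the two summands separately. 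For the first, I use that $\Proj_{T_j}$ is a projection: applying Proposition~\ref{prop:localprojection} (inequality~\eqref{math:polyapproximation:full} or~\eqref{math:polyapproximation:trimmed}, with $m=s$ and target exponent $0$) to the form $\omega-\Proj_{T_j}\omega$, whose image under $\Proj_{T_j}$ vanishes, gives $\|\omega-\Proj_{T_j}\omega\|_{L^{p}\Lambda^{k}(T_j)}\le C h_{T_j}^{s}\,|\omega-\Proj_{T_j}\omega|_{W^{s,p}\Lambda^{k}(T_j)}$. For the second, I choose $p$ to be a Bramble--Hilbert (averaged Taylor) polynomial approximation of $\omega-\tilde\omega$ of degree at most $r$ over the uniformly shaped patch $D_{F}$, so that $\|(\omega-\tilde\omega)-p\|_{L^{p}\Lambda^{k}(D_{F})}\le C h_{D_{F}}^{s}\,|\omega-\tilde\omega|_{W^{s,p}\Lambda^{k}(D_{F})}$, whence the same bound holds with $T_j$ in place of $D_F$.

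Finally I would combine the pieces, using that $T_1$ and $T_2$ are the only $n$-simplices of $D_F$ (so $\overline{D_F}=T_1\cup T_2$) and that adjacent simplices and the patch have comparable diameters, $h_S\simeq h_F\simeq h_{T_1}\simeq h_{T_2}\simeq h_{D_F}$; this produces exactly the asserted inequality with $C$ depending only on $p$, $s$, $n$, and the mesh regularity. (If the right-hand side is infinite there is nothing to prove, so one may assume $\omega\in W^{s,p}\Lambda^{k}(D_F)$; the Bramble--Hilbert and projection steps require $s$ in the range relevant to the chosen family, i.e.\ $s\le r+1$ for $\calP_{r}\Lambda^{k}$ and $s\le r$ for $\calP^{-}_{r}\Lambda^{k}$, which is the regime in which the lemma is applied.) The main obstacle is precisely this conversion of the raw $L^{p}$ estimate coming from~\eqref{theorem:biorthogonal:boundphi:dual} into the $h_F^{s}$-scaled seminorms: it succeeds only because the projection property of $\Proj_{T_j}$ lets one trade $\|\omega-\Proj_{T_j}\omega\|_{L^p}$ for $h^{s}|\omega-\Proj_{T_j}\omega|_{W^{s,p}}$, and because the cancellation property grants the freedom to subtract not just $\tilde\omega$ but $\tilde\omega$ plus a patchwise Bramble--Hilbert correction of $\omega-\tilde\omega$.
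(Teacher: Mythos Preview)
Your argument is correct, but it takes a more elaborate route than the paper's. The paper simply inserts $\tilde\omega$ (not $\tilde\omega+p$), applies the inverse inequality~\eqref{theorem:biorthogonal:boundphi:dual} directly at smoothness level $s$ to the polynomial $\tilde\omega-\Proj_{T_j}\omega\in\calP_r\Lambda^k(T_j)$, and finishes with the triangle inequality
\[
\bigl|\tilde\omega-\Proj_{T_j}\omega\bigr|_{W^{s,p}\Lambda^k(T_j)}
\le
\bigl|\tilde\omega-\omega\bigr|_{W^{s,p}\Lambda^k(T_j)}
+
\bigl|\omega-\Proj_{T_j}\omega\bigr|_{W^{s,p}\Lambda^k(T_j)}.
\]
No auxiliary polynomial $p$, no Bramble--Hilbert on the patch, and no appeal to the idempotence of $\Proj_{T_j}$ are needed.

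Your detour buys something, though. The inverse inequality~\eqref{theorem:biorthogonal:boundphi:dual} is stated with the \emph{full} Sobolev--Slobodeckij norm, so the paper's short argument literally yields $\|\cdot\|_{W^{s,p}}$ on the right-hand side rather than the seminorm $|\cdot|_{W^{s,p}}$ appearing in the lemma. By working at level $0$ and then upgrading via the projection identity $\Proj_{T_j}(\omega-\Proj_{T_j}\omega)=0$ together with a patchwise Bramble--Hilbert correction of $\omega-\tilde\omega$, you genuinely obtain the seminorm bound. The price is the restriction $s\le r{+}1$ (resp.\ $s\le r$) that you note at the end, and a longer proof; the paper's argument works for all $s\ge 0$ but, read strictly, gives norms rather than seminorms.
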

\begin{proof}
    Note that $\phi^\ast_{S,i} ( \tilde\omega )$ is well-defined, thus 
    \begin{align*}
     \phi^\ast_{S,i} \left( \Proj_{T_{1}} \omega \right)
     -
     \phi^\ast_{S,i} \left( \Proj_{T_{2}} \omega \right)
     =
     \phi^\ast_{S,i} \left( \Proj_{T_{1}} \omega \right)
     -
     \phi^\ast_{S,i} \left( \tilde      \omega \right)
     +
     \phi^\ast_{S,i} \left( \tilde      \omega \right)
     -
     \phi^\ast_{S,i} \left( \Proj_{T_{2}} \omega \right)
    \end{align*}
    Let $j \in \{1,2\}$. We use the inverse inequality \eqref{theorem:biorthogonal:boundphi:dual} to see that 
    \begin{align*}
     \left|
        \phi^\ast_{S,i} \left( \tilde      \omega \right)
        -
        \phi^\ast_{S,i} \left( \Proj_{T_{j}} \omega \right)
     \right|
     &\leq 
     C_{\rm A,s}
     h_{S}^{-\frac n p + k + s}
     \left|
        \tilde\omega
        -
        \omega
     \right|_{W^{s,p}\Lambda^{k}(T_{j})}
     .
    \end{align*}
    Lastly, we can use the triangle inequality:
    \begin{align*}
     \left|
        \tilde\omega
        -
        \omega
     \right|_{W^{s,p}\Lambda^{k}(T_{j})}
     \leq 
     \left|
        \tilde\omega
        -
        \omega
     \right|_{W^{s,p}\Lambda^{k}(T_{j})}
     +
     \left|
        \omega
        -
        \Proj_{T_{j}} \omega
     \right|_{W^{s,p}\Lambda^{k}(T_{j})}
     .
    \end{align*}
    The inequality follows. 
\end{proof}

This enables the following estimate away from the boundary.

\begin{theorem} \label{theorem:lowregularityestimate}
    Let $p \in [1,\infty]$ and $s,m \in [0,\infty)$ such that $s \leq m \leq r+1$,
    and $T \in \calT$ with $\Delta(T) \cap \calU = \emptyset$
    For any $\omega \in W^{m,p}\Lambda^{k}(\Omega)$
    we have 
    \begin{align*}
        &
        \left| \omega - \Projection \omega \right|_{W^{s,p}\Lambda^{k}(T)}
        \\&
        \quad
        \leq 
        C
        \sum_{ \substack{ T' \in \Delta_{n}(\calT) \\ T \cap T' \neq \emptyset } }
        \left| \omega - \Proj_{T'} \omega \right|_{W^{s,p}(T')}
        +
        C
        h^{m-s}_{T}
        \sum_{ \substack{ F \in \Delta_{n-1}(\calT) \\ T \cap F \neq \emptyset } }
        \left| \omega \right|_{W^{m,p}(D_{F})}
        . 
    \end{align*}
    Here, $C > 0$ depends only on $p$, $s$, $m$, $r$, $n$ and the mesh regularity. 
\end{theorem}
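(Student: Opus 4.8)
The plan is to combine the standard error representation of Lemma~\ref{lemma:standarderrorrepresentation} with the low-regularity bound of Lemma~\ref{lemma:lowregularityestimate}. First I would apply Lemma~\ref{lemma:standarderrorrepresentation}: since $\Delta(T) \cap \calU = \emptyset$ by hypothesis, no subsimplex $S \in \Delta(T)$ lies in $\calU$, so the third (boundary) sum in that lemma is empty, and there remains
\begin{align*}
  \left| \omega - \Projection\omega \right|_{W^{s,p}\Lambda^{k}(T)}
  &\leq
  \left| \omega - \Proj_{T}\omega \right|_{W^{s,p}\Lambda^{k}(T)}
  \\&\quad
  + C h_{T}^{\frac n p - k - s}
  \sum_{\substack{ S \in \Delta(T),\ i \in I(S) \\ T_{1},T_{2} \in \supersimplices_{n}(\calT,S) \\ F := T_{1} \cap T_{2} \in \Delta_{n-1}(\calT) }}
  \left| \phi_{S,i}^{\ast}(\Proj_{T_{1}}\omega) - \phi_{S,i}^{\ast}(\Proj_{T_{2}}\omega) \right|,
\end{align*}
where in each summand $F$ is the common facet of $T_{1}$ and $T_{2}$ and $S \in \Delta(F)$ because $S \subseteq T_{1} \cap T_{2}$.

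Next I would estimate each difference with Lemma~\ref{lemma:lowregularityestimate}, whose right-hand side still features a free polynomial $\tilde\omega \in \calP_{r}\Lambda^{k}(D_{F})$. I would take $\tilde\omega = \tilde\omega_{F}$ to be a good polynomial approximant of $\omega$ on the patch $D_{F}$: since $D_{F}$ is the union of at most two shape-regular $n$-simplices sharing the facet $F$, it is a Lipschitz domain with diameter comparable to $h_{F}$ that is star-shaped with respect to a ball whose radius is bounded below by a fixed multiple of $h_{F}$, the multiple depending only on the mesh regularity; hence the Bramble--Hilbert lemma applies on $D_{F}$, componentwise for differential forms, and --- as $0 \le s \le m \le r+1$ --- produces $\tilde\omega_{F} \in \calP_{r}\Lambda^{k}(D_{F})$ with
\begin{align*}
  \left| \omega - \tilde\omega_{F} \right|_{W^{s,p}\Lambda^{k}(D_{F})}
  \leq
  C\, h_{F}^{m-s}\, \left| \omega \right|_{W^{m,p}\Lambda^{k}(D_{F})}
  .
\end{align*}
Feeding $\tilde\omega_{F}$ into Lemma~\ref{lemma:lowregularityestimate} bounds each difference by
\begin{align*}
  C\, h_{F}^{-\frac n p + k + s}
  \Big(
    \left| \omega - \Proj_{T_{1}}\omega \right|_{W^{s,p}(T_{1})}
    + \left| \omega - \Proj_{T_{2}}\omega \right|_{W^{s,p}(T_{2})}
    + h_{F}^{m-s}\, \left| \omega \right|_{W^{m,p}(D_{F})}
  \Big)
  .
\end{align*}

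Finally I would assemble the pieces. Neighbouring simplices and facets have comparable diameters, so $h_{T}^{\frac n p - k - s} h_{F}^{-\frac n p + k + s} \le C$ and $h_{F}^{m-s} \le C h_{T}^{m-s}$ with constants controlled by $\mu(\calT)$, and only boundedly many $n$-simplices and facets meet $T$. Inserting the previous bound into the error representation, noting that every $T_{j}$ occurring satisfies $T_{j} \cap T \supseteq S \neq \emptyset$ and every $F$ occurring satisfies $F \cap T \supseteq S \neq \emptyset$, and folding $\left| \omega - \Proj_{T}\omega \right|_{W^{s,p}(T)}$ into the sum over $T' \cap T \neq \emptyset$ as the term $T' = T$, yields precisely the asserted inequality.

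The main obstacle is the Bramble--Hilbert step on the two-simplex patch $D_{F}$: one has to record that $D_{F}$ possesses the geometric regularity (a uniform star-shapedness or cone condition) required for a Bramble--Hilbert constant depending only on $n$, $p$, $m$, $r$, and the shape measure of $\calT$, rather than on the individual patch. This is the familiar patchwise estimate underlying the classical Cl\'ement interpolant and is routine once that uniform geometric regularity is in place; the rest is bookkeeping with the two cited lemmas and the standard comparability of diameters of adjacent simplices.
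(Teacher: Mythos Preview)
Your proposal is correct and follows exactly the approach the paper intends: the paper's proof is the single sentence ``This uses Lemma~\ref{lemma:lowregularityestimate} together with a standard error estimate on polynomial interpolation over star-shaped domains,'' and you have accurately unpacked that into applying Lemma~\ref{lemma:standarderrorrepresentation} (with the boundary sum empty since $\Delta(T)\cap\calU=\emptyset$), invoking Lemma~\ref{lemma:lowregularityestimate} on each facet-adjacent pair, and choosing $\tilde\omega_F$ via Bramble--Hilbert on the star-shaped patch $D_F$. Your remarks on the uniform geometric regularity of $D_F$ and the bookkeeping with comparable diameters are precisely the routine details the paper leaves implicit.
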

\begin{proof}
    This uses Lemma~\ref{lemma:lowregularityestimate} together with a standard error estimate on polynomial interpolation
    over star-shaped domains \cite{dupont1980polynomial}.
\end{proof}

Again, this is a general result that does not invoke the specific choice of finite element families. 
Concretely, we bound the error term as follows.

\begin{corollary} \label{corollary:lowregularityestimate}
    Let $p \in [1,\infty]$ and $m,s \in [0,\infty)$ with $s \leq m$.
    If $\calP\Lambda^{k}(\calT,\calU) = \calP_{r}\Lambda^{k}(\calT,\calU)$,
    suppose that $m \leq r+1$. 
    If $\calP\Lambda^{k}(\calT,\calU) = \calP^{-}_{r}\Lambda^{k}(\calT,\calU)$,
    suppose that $m \leq r$. 				
For any $T \in \calT$ and $\omega \in W^{m,p}\Lambda^{k}(\Omega)$
    we have 
    \begin{align*}
        &
        \left| \omega - \Projection \omega \right|_{W^{s,p}\Lambda^{k}(T)}
        \leq 
        C
        h_{T}^{m-s}
        \sum_{ \substack{ T' \in \Delta_n(\calT) \\ T' \cap T \neq \emptyset } }
        \left| \omega \right|_{W^{m,p}\Lambda^{k}(T')} 
        . 
    \end{align*}
    Here, $C > 0$ depends only on $s$, $m$, $r$, $n$ and the mesh regularity. 
\end{corollary}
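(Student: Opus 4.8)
The plan is to specialize the facet-localized estimate of Theorem~\ref{theorem:lowregularityestimate} and then feed in the polynomial approximation bounds of Proposition~\ref{prop:localprojection}. Fix $T \in \calT$; Theorem~\ref{theorem:lowregularityestimate} treats the cells $T$ with $\Delta(T)\cap\calU=\emptyset$, which is the case I concentrate on, the cells meeting $\calU$ being handled by the trace-based estimates under the corresponding regularity assumption. Since $s \le m \le r+1$ when $\calP\Lambda^{k}(\calT,\calU) = \calP_{r}\Lambda^{k}(\calT,\calU)$ and $s \le m \le r$ when $\calP\Lambda^{k}(\calT,\calU) = \calP^{-}_{r}\Lambda^{k}(\calT,\calU)$, Theorem~\ref{theorem:lowregularityestimate} applies and bounds $| \omega - \Projection\omega |_{W^{s,p}\Lambda^{k}(T)}$ by
\begin{align*}
  C \sum_{ \substack{ T' \in \Delta_{n}(\calT) \\ T \cap T' \neq \emptyset } }
    | \omega - \Proj_{T'} \omega |_{W^{s,p}\Lambda^{k}(T')}
  \;+\;
  C\, h_{T}^{m-s} \sum_{ \substack{ F \in \Delta_{n-1}(\calT) \\ T \cap F \neq \emptyset } }
    | \omega |_{W^{m,p}\Lambda^{k}(D_{F})}
  .
\end{align*}

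The first sum I would estimate cell by cell: for each $n$-simplex $T'$ meeting $T$ we have $\omega_{|T'} \in W^{m,p}\Lambda^{k}(T')$ with $0 \le s \le m \le r+1$ (respectively $m \le r$ in the trimmed case), so \eqref{math:polyapproximation:full} (respectively \eqref{math:polyapproximation:trimmed}) yields $| \omega - \Proj_{T'} \omega |_{W^{s,p}\Lambda^{k}(T')} \le C_{\Pi} h_{T'}^{m-s} | \omega |_{W^{m,p}\Lambda^{k}(T')}$. Adjacent simplices have comparable diameters, with ratio controlled by $C_{\rm Q}(\calT)$, so $h_{T'}^{m-s} \le C h_{T}^{m-s}$; and since at most $C_{\rm N}(\calT)$ simplices meet $T$, this sum is already bounded by $C h_{T}^{m-s} \sum_{T' \cap T \neq \emptyset} | \omega |_{W^{m,p}\Lambda^{k}(T')}$.

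For the second sum I would use that each $D_{F}$ with $F$ meeting $T$ is the union of at most two $n$-simplices, each of which meets $T$ and has diameter comparable to $h_{T}$. If $m \in \mathbb{N}_{0}$, the seminorm $| \omega |_{W^{m,p}\Lambda^{k}(D_{F})}$ is the sum of the (at most two) cellwise seminorms, so collecting terms finishes the argument. If $m \notin \mathbb{N}_{0}$, the Sobolev--Slobodeckij seminorm over $D_{F}$ additionally carries cross-cell contributions; these are dominated by the cellwise seminorms because $D_{F}$ is a two-simplex patch that is uniformly bi-Lipschitz to a fixed convex configuration, with constants depending only on $\mu(\calT)$, or --- more cheaply --- one may simply retain $| \omega |_{W^{m,p}\Lambda^{k}(D_{F})} \le | \omega |_{W^{m,p}\Lambda^{k}(\omega_{T})}$ with $\omega_{T} := \bigcup_{T' \cap T \neq \emptyset} T'$. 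Either way, summing over the finitely many facets meeting $T$ bounds the second sum by $C h_{T}^{m-s} \sum_{T' \cap T \neq \emptyset} | \omega |_{W^{m,p}\Lambda^{k}(T')}$, which completes the proof. I expect the main obstacle to be precisely this patchwise-to-cellwise passage for fractional $m$, together with checking that the final constant depends only on $s$, $m$, $r$, $n$, and the shape measure; the cells touching $\calU$ need the same extra care already developed for the trace-based estimates.
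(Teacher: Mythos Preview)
Your approach matches the paper's implied one: the corollary is stated without proof, and from its placement it is meant to follow from Theorem~\ref{theorem:lowregularityestimate} together with the local projection bounds of Proposition~\ref{prop:localprojection}, exactly as you outline.

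The two caveats you flag are both apt and are not resolved in the paper itself. First, the restriction $\Delta(T)\cap\calU=\emptyset$ from Theorem~\ref{theorem:lowregularityestimate} is genuinely needed here; the corollary as stated drops it, but the applications in Section~\ref{sec:anwendungen} reinstate it explicitly (the low-regularity estimates there are prefaced by ``If $T\cap\overline\Gamma=\emptyset$''), so your reading is the intended one. Second, the passage from the patch seminorm $|\omega|_{W^{m,p}(D_F)}$ to the cellwise sum is a real issue for fractional $m$: your ``cheap'' route $|\omega|_{W^{m,p}(D_F)}\le |\omega|_{W^{m,p}(\omega_T)}$ does not yield the cellwise sum on the right-hand side, and the cellwise sum can in fact be strictly smaller than the patch seminorm when $m\notin\mathbb N_0$. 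The paper does not address this either; in Section~\ref{sec:anwendungen} it simply writes the bound with the patch seminorm $|\cdot|_{\bfH^m(\calT(T))}$ rather than a cellwise sum, which sidesteps the difficulty.
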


\begin{remark}
 A differential form in $W^{1,p}\Lambda^{k}(\Omega)$ has well-defined traces in the sense Sobolev theory 
 and via the integration by parts formula, and both traces agree.
 Those two approaches allow us to define trace, and hence our interpolation operator,
 to differential forms in $\calW^{p,p}\Lambda^{k}(\Omega)$ and in rougher Sobolev-Slobodeckij spaces. 
 These two classes are distinct and none is a special case of the other
 outside of scalar fields. 
\end{remark}

\begin{remark}
 The following informal observation is of interest. 
 An interpolant that is bounded in Lebesgue spaces can respect homogeneous boundary conditions 
 only by incorporating them in the definition of the interpolant, and will satisfy a Bramble-Hilbert-type inequality near that boundary part only for sufficiently regular differential forms. 
 By contrast, an interpolant that requires differentiability everywhere can be built to satisfy such an inequality for all functions of sufficient regularity regardless of whether they satisfy the boundary conditions or not. 
\end{remark}

\section{Local and global approximation errors} \label{sec:equivalence}

We understand piecewise polynomial approximations of differential forms very well. 
We can interpret those as approximation discontinuous or non-conforming finite element spaces:
the approximation on each cell only uses local data. 
How much approximation quality is lost if we instead insist on approximation via conforming finite element spaces?
That is, if we insist on continuity and boundary conditions?
As it turns out, in many cases conforming and non-conforming finite element approximations have comparable errors,
and so the coupling of the local approximations does not essentially worsen the approximation.
Such result have received attention in the literature for $\bfH(\curl)$ and $\bfH(\divergence)$ \cite{ern2022equivalence,chaumont2021equivalence}. We prove analogous results in finite element exterior calculus 
but with slightly different requirements.

We begin with some definitions. 
For $p \in [1,\infty]$ and $\omega \in \calW^{p,p}\Lambda^{k}(\Omega,\Gamma)$ we define 
when $p < \infty$ or $p = \infty$, respectively,
\begin{gather*}
    E_{p}(\omega)
    :=
    \min_{ \omega_{h} \in \calP\Lambda^{k}(\calT,\calU) }
    \left( 
        \| \omega - \omega_{h} \|_{L^{p}\Lambda^{k}(\Omega)}^{p}
        +
        \sum_{ T \in \Delta_{n}(\calT) }
        h_{T}^{p}
        \| \exteriorderivative\omega - \exteriorderivative\omega_{h} \|_{L^{p}\Lambda^{k}(\Omega)}^{p}
    \right)^{\frac 1 p}
    ,
    \\
E_{\infty}(\omega)
    :=
    \min_{ \omega_{h} \in \calP\Lambda^{k}(\calT,\calU) }
\| \omega - \omega_{h} \|_{L^{\infty}\Lambda^{k}(\Omega)}
        +
        h_{T}
        \| \exteriorderivative\omega - \exteriorderivative\omega_{h} \|_{L^{\infty}\Lambda^{k}(\Omega)}
.
\end{gather*}
These terms measure the best approximation of the differential form $\omega$
by members of $\calP\Lambda^{k}(\calT)$ in terms of a weighted $\calW^{p,p}\Lambda^{k}(\Omega,\Gamma)$ norm.
As the mesh size goes to zero, those norms converge pointwise to the Lebesgue norms.

On the other hand, we define local error terms over each simplex. 
Here we consider the minimum of the local polynomial space over each simplex, 
notably without any local boundary conditions. 
For $p \in [1,\infty]$, any full-dimensional simplex $T \in \Delta_{n}(\calT)$
and any differential form $\omega \in \calW^{p,p}\Lambda^{k}(\Omega,\Gamma)$
we define 
\begin{gather*}
 e_{p,T}(\omega)
 :=
 \min_{ \omega_{h} \in \calP\Lambda^{k}(T) }
 \left( 
    \| \omega - \omega_{h} \|_{L^{p}\Lambda^{k}(T)}^{p}
    +
    h_{T}^{p}
    \| \exteriorderivative\omega - \exteriorderivative\omega_{h} \|_{L^{p}\Lambda^{k}(T)}^{p}
 \right)^{\frac 1 p}
 , \quad p < \infty,
 \\
 e_{\infty,T}(\omega)
 :=
 \min_{ \omega_{h} \in \calP\Lambda^{k}(T) }
\| \omega - \omega_{h} \|_{L^{\infty}\Lambda^{k}(T)}
    +
    h_{T}
    \| \exteriorderivative\omega - \exteriorderivative\omega_{h} \|_{L^{\infty}\Lambda^{k}(T)}
.
\end{gather*}
These use the same finite element space on each simplex but no boundary and continuity conditions are imposed.
They measure the approximation error in local terms. 

We want to compare the global and the local approximation errors. 
On the one hand, the sum of the local approximation errors is a lower bound for the global approximation error. 
\begin{gather*} \sum_{ T \in \Delta_{n}(\calT) } e_{p,T}(\omega)^{p} \leq E_{p}(\omega)^{p},
 \quad \omega \in \calW^{p,p}\Lambda^{k}(\Omega,\Gamma), \quad p < \infty,
 \\
 \max_{ T \in \Delta_{n}(\calT) } e_{\infty,T}(\omega) \leq E_{\infty}(\omega),
 \quad \omega \in \calW^{\infty,\infty}\Lambda^{k}(\Omega,\Gamma).
\end{gather*}
We want to show the converse bound. A conditional converse is provided by the following theorem,
which is inspired by \cite[Theorem~3.3]{ern2022equivalence} and \cite[Theorem~2]{chaumont2021equivalence}.
But very similar to those references, we show the converse inequality only for differential forms 
whose exterior derivative is in the finite element space. 

\begin{theorem} \label{theorem:localvsglobal}
    Let $p \in [1,\infty]$ and $\omega \in \calW^{p,p}\Lambda^{k}(\Omega,\Gamma)$
    with $\exteriorderivative\omega \in \calP\Lambda^{k}(\calT,\calU)$.
    Then 
    \begin{gather*}
        E_{p}(\omega) \leq C \sum_{ T \in \calT } e_{p,T}(\omega).
    \end{gather*}
    Here, $C > 0$ depends only on $p$, $r$, $n$, and the mesh regularity.
\end{theorem}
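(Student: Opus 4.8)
The plan is to use the averaging-based projection $\Projection$ as the bridge between the local and global approximation errors. Fix $\omega \in \calW^{p,p}\Lambda^{k}(\Omega,\Gamma)$ with $\exteriorderivative\omega \in \calP\Lambda^{k}(\calT,\calU)$. Since $\Projection\omega \in \calP\Lambda^{k}(\calT,\calU)$ is an admissible competitor in the minimization defining $E_{p}(\omega)$, it suffices to bound $\| \omega - \Projection\omega \|_{L^{p}\Lambda^{k}(\Omega)}$ and $h_{T}\| \exteriorderivative\omega - \exteriorderivative\Projection\omega \|_{L^{p}\Lambda^{k}(T)}$ cellwise by the local errors $e_{p,T}(\omega)$, then sum (for $p < \infty$) or take the maximum (for $p = \infty$) over $T \in \Delta_{n}(\calT)$, using that each simplex has only finitely many neighbors and adjacent simplices have comparable diameters.

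First I would handle the exterior-derivative term. By hypothesis $\exteriorderivative\omega$ is already in the finite element space, and by the commutativity relation \eqref{math:polyapproximation:commutativity} together with the projection property (Lemma~\ref{lemma:projectionproperty}), $\Projection$ should commute with $\exteriorderivative$ in the relevant sense and fix $\exteriorderivative\omega$; concretely, $\exteriorderivative\Projection\omega = \exteriorderivative\omega$, so this term vanishes. (If a direct commutation argument is awkward, one can instead observe that $\Projection$ applied to any $\eta \in \calP\Lambda^{k}(\calT,\calU)$ returns $\eta$, apply this to $\eta = \exteriorderivative\omega$, and combine with the cellwise commutativity of $\Proj_T$.) This is the step where the hypothesis $\exteriorderivative\omega \in \calP\Lambda^{k}(\calT,\calU)$ is essential and it simplifies the estimate dramatically.

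For the $L^{p}$ term, I would invoke the standard error representation of Lemma~\ref{lemma:standarderrorrepresentation} with $s = 0$, which bounds $\| \omega - \Projection\omega \|_{L^{p}\Lambda^{k}(T)}$ by $\| \omega - \Proj_{T}\omega \|_{L^{p}\Lambda^{k}(T)}$ plus weighted sums of degree-of-freedom differences $|\phi_{S,i}^{\ast}(\Proj_{T_1}\omega) - \phi_{S,i}^{\ast}(\Proj_{T_2}\omega)|$ over neighboring cells and boundary terms $|\phi_{S,i}^{\ast}(\Proj_{T_S}\omega)|$ for $S \in \calU$. The boundary terms are controlled by Lemma~\ref{lemma:erroratboundary} and the interior differences by Lemma~\ref{lemma:errorinbetween}, each producing expressions of the form $h_{S}^{-n/p+k}\big( \| \omega - \Proj_{T'}\omega \|_{L^p} + h_{S}\| \exteriorderivative\omega - \exteriorderivative\Proj_{T'}\omega \|_{L^p} \big)$. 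Because $\exteriorderivative\omega - \exteriorderivative\Proj_{T'}\omega = \ProjD_{T'}(\exteriorderivative\omega) - \exteriorderivative\Proj_{T'}\omega = 0$ by commutativity when $\exteriorderivative\omega$ is polynomial, only the first summand survives, and after using $h_S \sim h_T$ and the inverse scaling $h_T^{-n/p+k}$ against the $L^p$-norm over $T'$ I obtain a bound by $\sum_{T' : T \cap T' \neq \emptyset} \| \omega - \Proj_{T'}\omega \|_{L^p\Lambda^k(T')}$. Finally, $\| \omega - \Proj_{T'}\omega \|_{L^p\Lambda^k(T')} \le e_{p,T'}(\omega)$ since $\Proj_{T'}\omega$ is a particular element of $\calP\Lambda^k(T')$ and, again, its exterior derivative matches $\exteriorderivative\omega$; raising to the $p$-th power, summing over $T$, and using bounded overlap gives $E_p(\omega)^p \le C^p \sum_{T} e_{p,T}(\omega)^p \le C^p \big(\sum_T e_{p,T}(\omega)\big)^p$, with the $p=\infty$ case handled by the analogous $\ell^\infty$ argument.

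The main obstacle I anticipate is bookkeeping rather than anything deep: making sure the commutativity $\exteriorderivative\Projection = \ProjD_{(\cdot)}\exteriorderivative$ at the cellwise level genuinely forces all exterior-derivative error contributions to vanish under the hypothesis, including inside the boundary lemma (Lemma~\ref{lemma:erroratboundary}), and confirming that the weights $c(S,T)$ and the index sets interact cleanly with the bounded-overlap summation so that no factor depending on $1/|\supersimplices_n(\calT,S)|$ or the face-connection lengths escapes the mesh-regularity constant. Once those are pinned down, the estimate is a one-line consequence of Lemmas~\ref{lemma:standarderrorrepresentation}, \ref{lemma:erroratboundary}, and \ref{lemma:errorinbetween}.
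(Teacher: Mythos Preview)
Your treatment of the $L^{p}$ term is essentially the paper's argument: the paper bundles Lemmas~\ref{lemma:standarderrorrepresentation}, \ref{lemma:erroratboundary}, \ref{lemma:errorinbetween} into Theorem~\ref{theorem:mainresult} and then invokes quasi-optimality of $\Proj_{T}$ rather than your cellwise identity $\exteriorderivative\Proj_{T'}\omega = \exteriorderivative\omega$, but either route closes that part.

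There is, however, a genuine gap in how you dispose of the derivative contribution $h_{T}\|\exteriorderivative\omega - \exteriorderivative\Projection\omega\|_{L^{p}\Lambda^{k+1}(T)}$. The assertion $\exteriorderivative\Projection\omega = \exteriorderivative\omega$ is false: $\Projection$ is \emph{not} a commuting projection, and neither the cellwise commutativity~\eqref{math:polyapproximation:commutativity} nor the idempotence of $\Projection$ on $\calP\Lambda^{k}(\calT,\calU)$ gives this. Your parenthetical fallback conflates the $k$-form operator $\Projection$ with a putative $(k{+}1)$-form version; even granting $\Projection^{(k+1)}\exteriorderivative\omega = \exteriorderivative\omega$, nothing ties this to $\exteriorderivative\Projection^{(k)}\omega$. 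For a concrete obstruction, take $\omega$ closed but not in $\calP\Lambda^{k}(\calT)$: each $\Proj_{T}\omega$ is closed on its cell, yet averaging the shared degrees of freedom across neighboring cells generically breaks the local cocycle condition, so $\exteriorderivative\Projection\omega \neq 0$.

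The paper handles this term differently. It exploits that $\exteriorderivative\omega$ is piecewise polynomial by hypothesis and $\exteriorderivative\Projection\omega$ is piecewise polynomial by construction, and bounds $\|\exteriorderivative\omega - \exteriorderivative\Projection\omega\|_{L^{p}\Lambda^{k+1}(T)}$ by $C h_{T}^{-1}\|\omega - \Projection\omega\|_{L^{p}\Lambda^{k}(T)}$ via an inverse inequality, thereby reducing everything to the $L^{p}$ estimate you already have. To make that step rigorous one inserts $\Proj_{T}\omega$: on $T$ one has $\exteriorderivative\omega - \exteriorderivative\Projection\omega = \exteriorderivative(\Proj_{T}\omega - \Projection\omega) + (\exteriorderivative\omega - \exteriorderivative\Proj_{T}\omega)$, the second summand vanishes by your own cellwise commutativity observation, and the first is the exterior derivative of a genuine polynomial on $T$, so the standard inverse inequality applies and $\|\Proj_{T}\omega - \Projection\omega\|_{L^{p}}$ is controlled by $\|\omega - \Proj_{T}\omega\|_{L^{p}} + \|\omega - \Projection\omega\|_{L^{p}}$. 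With this correction the remainder of your outline goes through unchanged.
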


\begin{proof} 
    In what follows, $C > 0$ depends only on $p$, $r$, $n$, and the mesh regularity.
    We begin with 
    \begin{align*}
        E_{p}(\omega)^{p}
        &
        \leq
        \| \omega - \Projection\omega \|_{L^{p}\Lambda^{k}(\Omega)}^{p}
        +
        \sum_{ T \in \Delta_{n}(\calT) }
        h_{T}^{p}
        \| \exteriorderivative\omega - \exteriorderivative\Projection\omega \|_{L^{p}\Lambda^{k}(\Omega)}^{p}
        , \quad p < \infty,
        \\
        E_{\infty}(\omega)
        &
        \leq
        \| \omega - \Projection\omega \|_{L^{\infty}\Lambda^{k}(\Omega)}^{\infty}
        +
        \max_{ T \in \Delta_{n}(\calT) }
        h_{T}
        \| \exteriorderivative\omega - \exteriorderivative\Projection\omega \|_{L^{\infty}\Lambda^{k}(\Omega)}
        .
    \end{align*}
    Since $\exteriorderivative\omega \in \calP\Lambda^{k}(\calT,\calU)$, 
    we have an inverse inequality over any $T \in \Delta_{n}(T)$,
    \begin{align*}
        \| \exteriorderivative\omega - \exteriorderivative\Projection\omega \|_{L^{p}\Lambda^{k}(T)}
        \leq 
        C
        h_{T}^{-1}
        \| \omega - \Projection\omega \|_{L^{p}\Lambda^{k}(T)}
        ,
    \end{align*}
    and consequently, 
    \begin{align*}
        E_{p}(\omega)^{p}
        &\leq
        C
        \| \omega - \Projection\omega \|_{L^{p}\Lambda^{k}(\Omega)}^{p}
        , \quad p < \infty,
        \\
        E_{\infty}(\omega)
        &\leq
        C
        \| \omega - \Projection\omega \|_{L^{\infty}\Lambda^{k}(\Omega)}
        .
    \end{align*}
    In accordance with our main result, Theorem~\ref{theorem:mainresult}, 
    and since every simplex of the triangulation has only finitely many neighbors, 
    we find 
    \begin{align*}
        E_{p}(\omega)^{p}
        &\leq 
        C
        \sum_{ \substack{ T \in \Delta_{n}(\calT) } }
        \left( 
            \| \omega - \Proj_{T} \omega \|_{L^{p}\Lambda^{k}(T)} 
            + 
            h_{T}
            \| \exteriorderivative \omega - \exteriorderivative \Proj_{T} \omega \|_{L^{p}\Lambda^{k+1}(T)}  
        \right)^{p}
        , \quad p < \infty,
        \\
        E_{\infty}(\omega)
        &\leq 
        C
        \max_{ \substack{ T \in \Delta_{n}(\calT) } }
            \| \omega - \Proj_{T} \omega \|_{L^{\infty}\Lambda^{k}(T)} 
            + 
            h_{T}
            \| \exteriorderivative \omega - \exteriorderivative \Proj_{T} \omega \|_{L^{\infty}\Lambda^{k+1}(T)}  
        .
    \end{align*}
    We use the quasi-optimality of the local projections in Proposition~\ref{prop:localprojection}.
    For any $T \in \Delta_{n}(\calT)$ and $\omega_{h} \in \calP\Lambda^{k}(T)$ we estimate 
    \begin{align*}
        &
        \| \omega - \Proj_{T} \omega \|_{L^{p}\Lambda^{k}(T)} 
        +
        h_{T}
        \| \exteriorderivative\omega - \exteriorderivative\Proj_{T} \omega \|_{L^{p}\Lambda^{k}(T)} 
        \\&
        \leq 
        \| \omega - \omega_{h} + \Proj_{T} \omega_{h} - \Proj_{T} \omega \|_{L^{p}\Lambda^{k}(T)} 
        +
        h_{T}
        \| \exteriorderivative\omega - \exteriorderivative\omega_{h} + \Proj_{T} \exteriorderivative\omega_{h} - \Proj_{T} \exteriorderivative\omega \|_{L^{p}\Lambda^{k}(T)} 
        \\&
        \leq 
        C \| \omega - \omega_{h} \|_{L^{p}\Lambda^{k}(T)} 
        +
        C h_{T}
        \| \exteriorderivative\omega - \exteriorderivative\omega_{h} \|_{L^{p}\Lambda^{k}(T)} 
        .
    \end{align*}
    But this just implies the desired inequality, and the proof is complete.
\end{proof}

The following corollary addresses the special case when exterior derivatives are approximated
and is inspired by Theorem~1 in \cite{chaumont2021equivalence}. However, we do not assume 
that the domain is simply-connected and also make no topological assumptions on $\Gamma$.

\begin{corollary} \label{corollary:localvsglobal:derivatives}
    Let $p \in [1,\infty]$ and $\omega \in \calW^{p,p}\Lambda^{k}(\Omega,\Gamma)$ with $\exteriorderivative\omega = 0$.
    Then 
    \begin{gather*}
        \| \omega - \Projection \omega \|_{L^{p}\Lambda^{k}(T)}^{p}
        \leq
        C 
        \sum_{ \substack{ T \in \Delta_{n}(\calT) } }
        \| \omega - \Proj_{T} \omega \|_{L^{p}\Lambda^{k}(T)}^{p}
        , \quad p < \infty,
        \\
        \| \omega - \Projection \omega \|_{L^{\infty}\Lambda^{k}(T)}
        \leq
        C 
        \max_{ \substack{ T \in \Delta_{n}(\calT) } }
        \| \omega - \Proj_{T} \omega \|_{L^{\infty}\Lambda^{k}(T)}
        .
    \end{gather*}
    Here, $C > 0$ depends only on $p$, $r$, $n$, and the mesh regularity.
\end{corollary}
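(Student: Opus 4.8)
The plan is to read this off from Theorem~\ref{theorem:mainresult} once the exterior-derivative contributions have been eliminated. First I would observe that, since $r \geq 1$, the zero form belongs to $W^{l,p}\Lambda^{k+1}(T)$ for some $l \in [0,r]$; as $\omega \in \calW^{p,p}\Lambda^{k}(\Omega,\Gamma) \subseteq \calW^{p,p}\Lambda^{k}(\Omega)$ with $\exteriorderivative\omega = 0$, the commutativity identity~\eqref{math:polyapproximation:commutativity} of Proposition~\ref{prop:localprojection} is available on every $T \in \Delta_{n}(\calT)$ and yields $\exteriorderivative\Proj_{T}\omega = \ProjD_{T}\exteriorderivative\omega = 0$. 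Hence $\exteriorderivative\omega - \exteriorderivative\Proj_{T}\omega = 0$ for all $T \in \Delta_{n}(\calT)$.

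Next I would apply Theorem~\ref{theorem:mainresult} with $s = 0$. Since every summand on its right-hand side that contains $\exteriorderivative\omega - \exteriorderivative\Proj_{T'}\omega$ now vanishes, the estimate reduces to
\begin{align*}
    \| \omega - \Projection\omega \|_{L^{p}\Lambda^{k}(T)}
    \leq
    \| \omega - \Proj_{T}\omega \|_{L^{p}\Lambda^{k}(T)}
    +
    C \sum_{ \substack{ T' \in \Delta_{n}(\calT) \\ T' \cap T \neq \emptyset } }
    \| \omega - \Proj_{T'}\omega \|_{L^{p}\Lambda^{k}(T')}
\end{align*}
for each $T \in \Delta_{n}(\calT)$, with $C$ depending only on $p$, $r$, $n$, and the mesh regularity. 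Note that $T$ itself occurs among the simplices $T'$ with $T' \cap T \neq \emptyset$, so the first term on the right is already dominated by the sum.

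It remains to pass to the global bound. For $p < \infty$ I would raise the display to the $p$-th power and use that at most $C_{\rm N}(\calT) + 1$ simplices meet a fixed $T$, so that $(\sum_{i} a_{i})^{p} \leq C \sum_{i} a_{i}^{p}$ with $C$ depending only on $p$ and the mesh regularity; summing over $T \in \Delta_{n}(\calT)$ and invoking the finite-overlap property — each $T'$ lies in boundedly many of the inner sums — collapses the resulting double sum into $C \sum_{T' \in \Delta_{n}(\calT)} \| \omega - \Proj_{T'}\omega \|_{L^{p}\Lambda^{k}(T')}^{p}$, which is the asserted inequality. For $p = \infty$ one instead takes the maximum over $T$ on both sides of the display and again uses the bound on the number of neighbours. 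None of these steps is delicate; the single point that deserves an explicit line is the verification that the hypothesis $\exteriorderivative\omega = 0$ genuinely activates~\eqref{math:polyapproximation:commutativity}. Equivalently, one can extract the statement from the proof of Theorem~\ref{theorem:localvsglobal} specialized to $\exteriorderivative\omega \equiv 0 \in \calP\Lambda^{k}(\calT,\calU)$, where the inverse-inequality step is precisely what removes the derivative terms.
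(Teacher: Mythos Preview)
Your argument is correct and essentially coincides with the paper's: the paper merely cites Theorem~\ref{theorem:localvsglobal} together with the commutativity in Proposition~\ref{prop:localprojection}, and the relevant inequality inside the proof of Theorem~\ref{theorem:localvsglobal} is exactly the application of Theorem~\ref{theorem:mainresult} (with $s=0$) plus finite overlap that you spell out directly. One small inaccuracy in your closing remark: in the route via Theorem~\ref{theorem:localvsglobal}, the inverse-inequality step handles $\|\exteriorderivative\omega - \exteriorderivative\Projection\omega\|$ in the definition of $E_p$, not the terms $\|\exteriorderivative\omega - \exteriorderivative\Proj_{T'}\omega\|$ coming from Theorem~\ref{theorem:mainresult}; those are eliminated by the commutativity~\eqref{math:polyapproximation:commutativity}, just as in your primary argument.
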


\begin{proof}
 This follows from Theorem~\ref{theorem:localvsglobal} and the commutativity property in Proposition~\ref{prop:localprojection}.
\end{proof}

\begin{remark}
The analysis in the relevant references \cite{ern2022equivalence,chaumont2021equivalence} 
 addresses the dependence of the constants on the polynomial degree, 
 which is not within the scope of this article. 
 However, we do not assume that the domain $\Omega$ is simply-connected nor do we make assumptions on the topology of $\Gamma$,
 which seems to be new at least for the case of approximation in $\bfH(\curl)$.
\end{remark}

One way of interpreting the results of this and the former section is this:
if a differential form features enough regularity to have continuity and boundary conditions,
then every piecewise polynomial (but not necessarily continuous) approximation 
must replicate those continuity and boundary conditions so closely 
that we can just require those conditions directly on the approximation itself.

\section{Applications} \label{sec:anwendungen}

As a service to the reader, we review our results in the context of three-dimensional vector analysis.
Let $\Omega \subseteq \bbR^{3}$ be a bounded Lipschitz domain endowed with a triangulation $\calT$,
and let a two-dimensional submanifold $\Gamma \subseteq \partial\Omega$ of the boundary, possible empty, 
be endowed with a triangulation $\calU \subset \calT$. 

We focus on the Hilbert space theory, the extension to the Banach space case is obvious.
We abbreviate $H^{m}(\Omega) = W^{m,2}(\Omega)$ when $m \geq 0$.
$\bfL^{2}(\Omega)$ is the space of square-integrable vector fields over $\Omega$,
and $\bfH^{m}(\Omega)$, where $m \geq 0$, is the space of vector fields with coefficients in $W^{m,2}(\Omega)$.  
We write $|\cdot|_{\bfH^{m}}$ for the associated seminorm.
Let  
\begin{gather*}
    \bfH(\curl)
    :=
    \left\{ \mathbf{u} \in \bfL^{2}(\Omega) \suchthat \curl \mathbf{u} \in \bfL^{2}(\Omega) \right\}
    ,
    \\ 
    \bfH(\divergence)
    :=
    \left\{ \mathbf{u} \in \bfL^{2}(\Omega) \suchthat \divergence \mathbf{u} \in L^{2}(\Omega) \right\}
    .
\end{gather*}
Subspaces with boundary conditions can be defined in different ways. 
We abbreviate $H^{1}(\Omega,\Gamma) = W^{1,2}(\Omega,\Gamma)$.
In accordance with Theorem~\ref{theorem:trace}, 
whenever $m > 1/2$, 
we write $\bfH^{m}_{\tan}(\Omega,\Gamma)$ and $\bfH^{m}_{\nor}(\Omega,\Gamma)$
for the subspaces of $\bfH^{m}(\Gamma)$ that have vanishing tangential or normal traces along $\Gamma$, respectively.
We also have boundary conditions defined via integration by parts formulas. 
We let $\bfH(\curl,\Gamma)$ be the subspace of $\bfH(\curl)$ whose members $\mathbf{u}$ satisfy 
\begin{gather*}
    \int_{\Omega} \langle \curl \mathbf{u}, \phi \rangle \dx = \int_{\Omega} \langle \mathbf{u}, \curl \phi \rangle \dx
\end{gather*}
for all vector fields $\phi \in C^{\infty}(\overline\Omega)^{3}$ vanishing near $\Gamma$.
We let $\bfH(\divergence,\Gamma)$ be the subspace of $\bfH(\divergence)$ whose members $\mathbf{u}$ satisfy 
\begin{gather*}
    \int_{\Omega} \langle \divergence \mathbf{u}, \phi \rangle \dx = -\int_{\Omega} \langle \mathbf{u}, \grad \phi \rangle \dx
\end{gather*}
for all functions $\phi \in C^{\infty}(\overline\Omega)$ vanishing near $\Gamma$.

We introduce finite element spaces. The Lagrange space of degree $r$ is written $\Lagrange_{r}(\calT) = \calP_{r}(\calT)$. We write $\Nedfst_{r}(\calT)$ and $\Nedsnd_{r}(\calT)$ for the curl-conforming N\'ed\'elec spaces of first and second kind, respectively, and $\BDM_{r}(\calT)$ and $\RT_{r}(\calT)$ for the divergence-conforming Brezzi-Douglas-Marini space and the Raviart-Thomas space, respectively, of degree $r$ over $\calT$. 
By the convention that we adopt in this article, these finite element spaces contain the polynomial vector fields up to degree $r$, and the spaces $\Nedfst_{r}(\calT)$ and $\RT_{r}(\calT)$ correspond to the trimmed spaces $\calP^{-}_{r}\Lambda^{1}(\calT)$ and $\calP^{-}_{r}\Lambda^{2}(\calT)$, respectively. Thus,
\begin{gather*}
 \Nedsnd_{r}(\calT) \subseteq \Nedfst_{r}(\calT),
 \quad 
 \BDM_{r}(\calT) \subseteq \RT_{r}(\calT)
\end{gather*}
In addition, we use the following notation for the subspaces satisfying partial boundary conditions:
\begin{gather*}
    \Lagrange_{r}(\calT,\calU) := \bfH(\Omega,\Gamma) \cap \Lagrange_{r}(\calT),
    \\
    \Nedfst_{r}(\calT,\calU) := \bfH(\curl,\Gamma) \cap \Nedfst_{r}(\calT),
    \\
    \Nedsnd_{r}(\calT,\calU) := \bfH(\curl,\Gamma) \cap \Nedsnd_{r}(\calT),
    \\
    \BDM_{r}(\calT,\calU) := \bfH(\divergence,\Gamma) \cap \BDM_{r}(\calT),
    \\
    \RT_{r}(\calT,\calU) := \bfH(\divergence,\Gamma) \cap \RT_{r}(\calT).
\end{gather*}
We may define these spaces equivalently, and more explicitly, by setting the degrees of freedom of the finite element spaces to zero along the boundary part, that is, for all simplices in the subcomplex $\calU$.

With an abuse of notation, we let $\calT(T)$ be the collection of tetrahedra of $\calT$ that are adjacent to $T$, 
and also the polyhedral domain described by that collection. 
\\

We first discuss the approximation results for the finite element spaces
that contain exactly the polynomial spaces up to degree $r$ on each element.

\begin{theorem}
    Let $r \geq 1$.
    There exist projections 
    \begin{gather*}
        \Projection_{\Lagrange} : L^{2}(\Omega) \rightarrow \Lagrange_{r}(\calT,\calU),
        \\
        \Projection_{\BDM} : \bfL^{2}(\Omega) \rightarrow \BDM_{r}(\calT,\calU),
        \\
        \Projection_{\Nedsnd} : \bfL^{2}(\Omega) \rightarrow \Nedsnd_{r}(\calT,\calU),
    \end{gather*}
    such that for $m \in [0,r+1]$, $l \in [0,r]$, 
    all tetrahedra $T \in \calT$,
    the following inequalities hold
    whenever the respective right-hand sides are well-defined.
    
    For all $u \in H^{1}(\Omega,\Gamma)$ we have 
    \begin{gather*}
        \| u - \Projection_{\Lagrange} u \|_{L^{2}(T)}
        \leq 
        C        
\sum_{ \substack{ T' \in \calT(T) } }
        h^{m}_{T}\left| u \right|_{H^{m}(T')} + h^{l+1}_{T}\left| \nabla u \right|_{\bfH^{l}(T')}
        .
    \end{gather*}
    For all $\mathbf{u} \in \bfH(\curl,\Gamma)$ we have 
    \begin{gather*}
        \| \mathbf{u} - \Projection_{\Nedsnd} \mathbf{u} \|_{\bfL^{2}(T)}
        \leq 
        C        
\sum_{ \substack{ T' \in \calT(T) } }
        h^{m}_{T}\left| \mathbf{u} \right|_{\bfH^{m}(T')} + h^{l+1}_{T}\left| \curl \mathbf{u} \right|_{\bfH^{l}(T')}
        .
    \end{gather*}
    For all $\mathbf{u} \in \bfH^{m}_{\tan}(\Omega,\Gamma)$, where $m \geq 2$, we have 
    \begin{gather*}
        \| \mathbf{u} - \Projection_{\Nedsnd} \mathbf{u} \|_{\bfL^{2}(T)}
        \leq 
        C        
\sum_{ \substack{ T' \in \calT(T) } }
        h^{m}_{T}\left| \mathbf{u} \right|_{\bfH^{m}(T')} + h^{1/2}_{T}\left| \curl \mathbf{u} \right|_{\bfH^{m - 1/2}(T')}
        .
    \end{gather*}
    For all $\mathbf{u} \in \bfH(\divergence,\Gamma)$ we have 
    \begin{gather*}
        \| \mathbf{u} - \Projection_{\BDM} \mathbf{u} \|_{\bfL^{2}(T)}
        \leq 
        C        
\sum_{ \substack{ T' \in \calT(T) } }
        h^{m}_{T}\left| \mathbf{u} \right|_{\bfH^{m}(T')} + h^{l+1}_{T}\left| \divergence \mathbf{u} \right|_{H^{l}(T')}
        .
    \end{gather*}
    For all $\mathbf{u} \in \bfH^{m}_{\nor}(\Omega,\Gamma)$, where $m \geq 2$, we have 
    \begin{gather*}
        \| \mathbf{u} - \Projection_{\BDM} \mathbf{u} \|_{\bfL^{2}(T)}
        \leq 
        C        
\sum_{ \substack{ T' \in \calT(T) } }
        h^{m}_{T}\left| \mathbf{u} \right|_{\bfH^{m}(T')} + h^{1/2}_{T}\left| \divergence \mathbf{u} \right|_{H^{m - 1/2}(T')}
        .
    \end{gather*}
    If $T \cap \overline\Gamma = \emptyset$, then for all $\mathbf{u} \in \bfH^{m}(\Omega)$ we have 
    \begin{gather*}
        \left| u - \Projection_{\Lagrange} u \right|_{L^{2}(T)}
        \leq 
        C        
        h^{m}_{T} \left| u \right|_{H^{m}(\calT(T))}
        \\
        \left| \mathbf{u} - \Projection_{\BDM} \mathbf{u} \right|_{\bfL^{2}(T)}
        \leq 
        C        
        h^{m}_{T} \left| \mathbf{u} \right|_{\bfH^{m}(\calT(T))}
        \\
        \left| \mathbf{u} - \Projection_{\Nedsnd} \mathbf{u} \right|_{\bfL^{2}(T)}
        \leq 
        C        
        h^{m}_{T} \left| \mathbf{u} \right|_{\bfH^{m}(\calT(T))}
        .
    \end{gather*}
    Here, $C > 0$ only on the polynomial degree $r$, $m$, $l$, and the mesh regularity.
\end{theorem}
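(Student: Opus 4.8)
The plan is to obtain each displayed inequality by specializing the abstract operator of Section~\ref{sec:mainresult} and the estimates of Section~\ref{sec:estimates} to three concrete form degrees and translating through the standard correspondence between differential forms and fields over $\bbR^{3}$.

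First I would fix the dictionary: identify $0$-forms with scalar fields, $1$-forms with vector fields via the Euclidean inner product, and $2$-forms with vector fields via the Hodge star, so that the exterior derivative on $0$-, $1$-, and $2$-forms becomes $\grad$, $\curl$, $\divergence$; always using option~(A) of Convention~\ref{convention:twofamilies}, the polynomial spaces become $\calP_{r}\Lambda^{0}(\calT)=\Lagrange_{r}(\calT)$, $\calP_{r}\Lambda^{1}(\calT)=\Nedsnd_{r}(\calT)$, $\calP_{r}\Lambda^{2}(\calT)=\BDM_{r}(\calT)$. Under the same dictionary and with $p=2$, the spaces $\calW^{2,2}\Lambda^{0}(\Omega,\Gamma)$, $\calW^{2,2}\Lambda^{1}(\Omega,\Gamma)$, $\calW^{2,2}\Lambda^{2}(\Omega,\Gamma)$ become $H^{1}(\Omega,\Gamma)$, $\bfH(\curl,\Gamma)$, $\bfH(\divergence,\Gamma)$, because the localized integration-by-parts definition of partial boundary conditions is exactly the defining relation written in Section~\ref{sec:anwendungen} (and for $k=0$ reduces to a vanishing $H^{1}$-trace), while $W^{m,2}\Lambda^{1}(\Omega,\Gamma)$ and $W^{m,2}\Lambda^{2}(\Omega,\Gamma)$ become $\bfH^{m}_{\tan}(\Omega,\Gamma)$ and $\bfH^{m}_{\nor}(\Omega,\Gamma)$. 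I would then define $\Projection_{\Lagrange}$, $\Projection_{\Nedsnd}$, $\Projection_{\BDM}$ to be the operator $\Projection$ of Section~\ref{sec:mainresult} for $k=0,1,2$, for any admissible weights (e.g.\ the Ern--Guermond weights~\eqref{math:weightproperty}); each maps $L^{2}$, resp.\ $\bfL^{2}$, into $\calP\Lambda^{k}(\calT,\calU)$, hence into the claimed finite element space with partial boundary conditions, and Lemma~\ref{lemma:projectionproperty} shows each is a projection.

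Second, the bounds for $u\in H^{1}(\Omega,\Gamma)$, $\mathbf{u}\in\bfH(\curl,\Gamma)$, $\mathbf{u}\in\bfH(\divergence,\Gamma)$ follow from Corollary~\ref{corollary:mainresult} with $p=2$, $s=0$, and $k=0,1,2$, respectively, with the exponents $m,l$ exactly as in the statement (the constraints $m\le r+1$, $m-1\le l\le r$ are its hypotheses); since $s=0$ one has $h_{T}^{m-s}h_{T}^{l+1-m}=h_{T}^{l+1}$, and replacing $\exteriorderivative\omega$ by $\grad u$, $\curl\mathbf{u}$, $\divergence\mathbf{u}$ reproduces the three displays, the neighbour sums over $\Delta_{n}(\calT)$ turning into sums over $\calT(T)$. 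The three ``interior'' estimates assuming $T\cap\overline\Gamma=\emptyset$ come the same way from Corollary~\ref{corollary:lowregularityestimate} with $p=2$, $s=0$, $k=0,1,2$, whose local hypothesis holds because no simplex near $T$ lies in $\calU$ once $T$ is disjoint from $\overline\Gamma$. The two estimates for $\bfH^{m}_{\tan}(\Omega,\Gamma)$ and $\bfH^{m}_{\nor}(\Omega,\Gamma)$ with $m\ge 2$ use the Sobolev-trace branch: such a field has vanishing trace along $\Gamma$ in the sense of Theorem~\ref{theorem:trace} and lies in $W^{m,2}\Lambda^{k}(\Omega,\Gamma)$, so Corollary~\ref{corollary:mainresult:sobolevtracetheory} (with $p=2$, $s=0$) yields the term $h_{T}^{m}|\mathbf{u}|_{\bfH^{m}}$ summed over $\calT(T)$, while the derivative contribution is extracted, as in Theorem~\ref{theorem:mainresult}, from the integration-by-parts representation of the degrees of freedom (Lemmas~\ref{lemma:standarderrorrepresentation},~\ref{lemma:erroratboundary},~\ref{lemma:errorinbetween}), bounding $\|\exteriorderivative\omega-\exteriorderivative\Proj_{T'}\omega\|_{L^{2}}$ via~\eqref{math:polyapproximation:exteriorderivative} at the half-integer smoothness index $m-\tfrac12$; translating $\exteriorderivative$ back to $\curl$, resp.\ $\divergence$, and collecting powers of $h_{T}$ finishes these cases, the clause ``whenever the right-hand side is well-defined'' covering infinite fractional derivative norms.

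The proof is thus a translation exercise, and the main obstacle is bookkeeping rather than conceptual: for each form degree one must track which abstract space and which of the two boundary-condition notions (trace-based versus integration-by-parts) matches which concrete object, and then choose $(m,l,s,p)$ in the Section~\ref{sec:estimates} results so that every power of $h_{T}$ and every Sobolev index lines up with the corresponding display — in particular for the half-order estimates, where the derivative term must be produced through the integration-by-parts representation and not through Sobolev trace theory.
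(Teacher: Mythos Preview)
Your proposal is correct and matches the paper's implicit argument: this theorem is stated in Section~\ref{sec:anwendungen} without proof, precisely because it is meant as a direct specialization of the abstract operator $\Projection$ and the estimates of Section~\ref{sec:estimates} under the standard identification of $k$-forms ($k=0,1,2$) with scalar and vector fields in $\bbR^{3}$, exactly as you outline. Your references to Lemma~\ref{lemma:projectionproperty}, Corollary~\ref{corollary:mainresult}, Corollary~\ref{corollary:mainresult:sobolevtracetheory}, and Corollary~\ref{corollary:lowregularityestimate} for the respective blocks of inequalities are the intended ones.

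One small wording correction: for the interior estimates, the hypothesis you need from Theorem~\ref{theorem:lowregularityestimate} is $\Delta(T)\cap\calU=\emptyset$, i.e.\ no \emph{subsimplex} of $T$ lies in $\calU$; this is indeed implied by $T\cap\overline\Gamma=\emptyset$, but your phrase ``no simplex near $T$ lies in $\calU$'' overstates what is required and what is guaranteed. Also, your handling of the $\bfH^{m}_{\tan}$ and $\bfH^{m}_{\nor}$ displays with the $h_T^{1/2}$ factor is slightly ad hoc; note that once $m\geq 2$, Corollary~\ref{corollary:mainresult:sobolevtracetheory} alone already yields the $h_T^{m}|\mathbf{u}|_{\bfH^{m}}$ bound, and the additional derivative term in the statement is redundant (or, depending on how one reads it, a separate consequence of Theorem~\ref{theorem:mainresult} with the exterior-derivative regularity index chosen as $m-\tfrac12$). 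Your invocation of the clause ``whenever the right-hand side is well-defined'' is the right way to absorb this.
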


Next we discuss approximation results for the finite element spaces
that contain not only contain the polynomial spaces up to degree $r$ on each element
but also additional degrees of freedom such that their curls and divergences, respectively,
have approximation capability of degree $r$.

\begin{theorem}
    Let $r \geq 0$.
    There exist projections 
    \begin{gather*}
        \Projection_{\Nedfst} : \bfH(\curl,\Gamma) \rightarrow \Nedfst_{r}(\calT,\calU),
        \\
        \Projection_{\RT}     : \bfH(\divergence,\Gamma) \rightarrow     \RT_{r}(\calT,\calU),
    \end{gather*}
    such that for $m \in [0,r+1]$, $l \in [0,r+1]$, 
    all tetrahedra $T \in \calT$,
    the following inequalities hold
    whenever the respective right-hand sides are well-defined.
    
    For all $\mathbf{u} \in \bfH(\curl,\Gamma)$ we have 
    \begin{gather*}
        \| \mathbf{u} - \Projection_{\Nedfst} \mathbf{u} \|_{\bfL^{2}(T)}
        \leq 
        C        
\sum_{ \substack{ T' \in \calT(T) } }
        h^{m}_{T}\| \mathbf{u} \|_{\bfH^{m}(T')} + h^{l+1}_{T}\| \curl \mathbf{u} \|_{\bfH^{l}(T')}
    \end{gather*}
    For all $\mathbf{u} \in \bfH^{m}_{\tan}(\Omega,\Gamma)$, where $m \geq 2$, we have 
    \begin{gather*}
        \| \mathbf{u} - \Projection_{\Nedfst} \mathbf{u} \|_{\bfL^{2}(T)}
        \leq 
        C        
\sum_{ \substack{ T' \in \calT(T) } }
        h^{m}_{T}\left| \mathbf{u} \right|_{\bfH^{m}(T')} + h^{1/2}_{T}\left| \curl \mathbf{u} \right|_{\bfH^{m - 1/2}(T')}
        .
    \end{gather*}
    For all $\mathbf{u} \in \bfH(\divergence,\Gamma)$ we have 
    \begin{gather*}
        \| \mathbf{u} - \Projection_{\RT} \mathbf{u} \|_{\bfL^{2}(T)}
        \leq 
        C        
\sum_{ \substack{ T' \in \calT(T) } }
        h^{m}_{T}\| \mathbf{u} \|_{\bfH^{m}(T')} + h^{l+1}_{T}\| \divergence \mathbf{u} \|_{H^{l}(T')}
    \end{gather*}
    For all $\mathbf{u} \in \bfH^{m}_{\nor}(\Omega,\Gamma)$, where $m \geq 2$, we have 
    \begin{gather*}
        \| \mathbf{u} - \Projection_{\RT} \mathbf{u} \|_{\bfL^{2}(T)}
        \leq 
        C        
\sum_{ \substack{ T' \in \calT(T) } }
        h^{m}_{T}\left| \mathbf{u} \right|_{\bfH^{m}(T')} + h^{1/2}_{T}\left| \divergence \mathbf{u} \right|_{H^{m - 1/2}(T')}
        .
    \end{gather*}
    If $T \cap \overline\Gamma = \emptyset$, then for all $\mathbf{u} \in \bfH^{m}(\Omega)$ we have
    \begin{gather*}
        \| \mathbf{u} - \Projection_{\Nedfst} \mathbf{u} \|_{\bfL^{2}(T)}
        \leq 
        C        
        h^{m}_{T} \left| \mathbf{u} \right|_{\bfH^{m}(\calT(T))}
        ,
        \\
        \| \mathbf{u} - \Projection_{\RT} \mathbf{u} \|_{\bfL^{2}(T)}
        \leq 
        C 
        h^{m}_{T} \left| \mathbf{u} \right|_{\bfH^{m}(\calT(T))}
        .
    \end{gather*}
    Here, $C > 0$ only on the polynomial degree $r$, $m$, $l$, and the mesh regularity.
\end{theorem}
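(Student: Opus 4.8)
The plan is to obtain the statement by translating the abstract machinery of Sections~\ref{sec:estimates} and~\ref{sec:equivalence} into three-dimensional vector calculus, specialised to integrability exponent $p=2$ and to form degrees $k=1$ (curl-conforming) and $k=2$ (divergence-conforming). Under the usual identification of vector proxies with differential forms, $\Nedfst_r(\calT,\calU)$ and $\RT_r(\calT,\calU)$ are trimmed finite element spaces $\calP^{-}\Lambda^{1}(\calT,\calU)$ and $\calP^{-}\Lambda^{2}(\calT,\calU)$ of Section~\ref{sec:anwendungen} (case (B) of Convention~\ref{convention:twofamilies}), with $\curl$ and $\divergence$ playing the role of the exterior derivative $\exteriorderivative$ on $1$- and $2$-forms; the indexing is such that these spaces carry the full polynomials of degree $r$, which matches the ranges $m,l\in[0,r+1]$ below. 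On the function-space side, $\bfH(\curl,\Gamma)=\calW^{2,2}\Lambda^{1}(\Omega,\Gamma)$ and $\bfH(\divergence,\Gamma)=\calW^{2,2}\Lambda^{2}(\Omega,\Gamma)$, whereas $\bfH^{m}_{\tan}(\Omega,\Gamma)=W^{m,2}\Lambda^{1}(\Omega,\Gamma)$ and $\bfH^{m}_{\nor}(\Omega,\Gamma)=W^{m,2}\Lambda^{2}(\Omega,\Gamma)$; for $m\ge 1$ the Sobolev trace and the integration-by-parts trace agree, so a field in $\bfH^{m}_{\tan}$ or $\bfH^{m}_{\nor}$ with $m\ge 1$ also belongs to the corresponding $\calW^{2,2}$-space with partial boundary conditions along $\Gamma$.

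First I would define the operators. Fixing any admissible family of weights $c(S,T)$ --- for instance the uniform choice \eqref{math:weightproperty} --- I let $\Projection_{\Nedfst}$ and $\Projection_{\RT}$ be the operators $\Projection$ of Section~\ref{sec:mainresult} for $k=1$ and $k=2$, restricted to $\bfH(\curl,\Gamma)$ and $\bfH(\divergence,\Gamma)$ respectively. Since $\Projection$ sums only over $S\notin\calU$ and $\{\phi_{S,i}\}_{S\notin\calU}$ is a basis of the space with partial boundary conditions (Theorem~\ref{theorem:biorthogonal}), its image is exactly $\Nedfst_r(\calT,\calU)$, respectively $\RT_r(\calT,\calU)$, and Lemma~\ref{lemma:projectionproperty} shows that it is a projection. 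Because $\calT$ triangulates a domain it is face-connected, so all the estimates of Section~\ref{sec:estimates} are available.

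Next I would read off each family of inequalities. For $\mathbf u\in\bfH(\curl,\Gamma)=\calW^{2,2}\Lambda^{1}(\Omega,\Gamma)$ I apply Theorem~\ref{theorem:mainresult} with $s=0$, $p=2$, $k=1$, and bound the two local terms on its right-hand side by Proposition~\ref{prop:localprojection}: $\|\omega-\Proj_{T'}\omega\|_{L^{2}(T')}\le C h_{T'}^{m}|\omega|_{W^{m,2}(T')}$ via \eqref{math:polyapproximation:trimmed} and $\|\exteriorderivative\omega-\exteriorderivative\Proj_{T'}\omega\|_{L^{2}(T')}\le C h_{T'}^{l}|\exteriorderivative\omega|_{W^{l,2}(T')}$ via \eqref{math:polyapproximation:exteriorderivative}; replacing $h_{T'}$ by $h_T$ (neighbouring simplices have comparable diameters) and summing over the finitely many adjacent cells produces the stated bound with $\curl\mathbf u$ in place of $\exteriorderivative\omega$. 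I use Theorem~\ref{theorem:mainresult} rather than Corollary~\ref{corollary:mainresult:trimmed} precisely because the latter couples the exponents $m$ and $l$, whereas the statement needs them independent. The divergence-conforming inequality is the identical argument with $k=2$. For $T$ with $T\cap\overline\Gamma=\emptyset$ one has $\Delta(T)\cap\calU=\emptyset$, so Corollary~\ref{corollary:lowregularityestimate} applies to $\mathbf u\in\bfH^{m}(\Omega)=W^{m,2}\Lambda^{k}(\Omega)$ and yields directly $\|\mathbf u-\Projection\mathbf u\|_{\bfL^{2}(T)}\le C h_T^{m}|\mathbf u|_{\bfH^{m}(\calT(T))}$. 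For the estimates over $\bfH^{m}_{\tan}(\Omega,\Gamma)$ and $\bfH^{m}_{\nor}(\Omega,\Gamma)$ with $m\ge 2$ I would instead go through the standard error representation of Lemma~\ref{lemma:standarderrorrepresentation}, bounding the boundary degrees of freedom with the Sobolev-trace lemma~\ref{lemma:erroratboundary:sobolevtracetheory} (legitimate since $m\ge 2>1/2$) and the interelement jumps with Lemma~\ref{lemma:errorinbetween}, and then inserting the polynomial approximation bounds of Proposition~\ref{prop:localprojection} for $\omega$ and for $\exteriorderivative\omega$; the prefactor $h_T^{1/2}$ and the Slobodeckij index $m-\tfrac12$ in the curl/divergence term come out of the scaling $h_F^{\,m-1/p}$ in the trace inequality Lemma~\ref{lemma:tracelemma} at $p=2$, combined with the scalings \eqref{theorem:roughdof:scaling:xi}--\eqref{theorem:roughdof:scaling:trace}. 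Alternatively, the $h_T^{m}|\mathbf u|_{\bfH^{m}}$ part alone already follows from Theorem~\ref{theorem:mainresult:sobolevtracetheory} together with Proposition~\ref{prop:localprojection}.

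The work is almost entirely bookkeeping rather than new analysis. The one point that requires genuine care is the matching of parameter ranges: one must reconcile the degree convention of Section~\ref{sec:anwendungen} (under which $\Nedfst_r$ and $\RT_r$ carry the full degree-$r$ polynomials) with the admissible ranges in Proposition~\ref{prop:localprojection} and its corollaries, and, for the $\bfH^{m}_{\tan}$ and $\bfH^{m}_{\nor}$ estimates, decide which of the two available error representations --- the $\calW^{p,p}$-based one or the Sobolev-trace-based one --- to use for each group of degrees of freedom, so that the fractional power $h^{1/2}$ and the index $m-\tfrac12$ emerge with the correct exponents. Beyond this, no structural obstacle is anticipated.
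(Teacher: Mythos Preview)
The paper gives no explicit proof for this theorem: it is presented in Section~\ref{sec:anwendungen} purely as a translation of the abstract results of Sections~\ref{sec:mainresult}--\ref{sec:estimates} into three-dimensional vector calculus, and your proposal carries out precisely that translation. Your identification of which abstract statement drives each inequality (Theorem~\ref{theorem:mainresult} plus Proposition~\ref{prop:localprojection} for the $\bfH(\curl,\Gamma)$ and $\bfH(\divergence,\Gamma)$ bounds, Corollary~\ref{corollary:lowregularityestimate} for the interior estimate, and the Sobolev-trace route via Lemmas~\ref{lemma:standarderrorrepresentation}, \ref{lemma:erroratboundary:sobolevtracetheory}, \ref{lemma:errorinbetween} for the $\bfH^{m}_{\tan}$ and $\bfH^{m}_{\nor}$ bounds) is correct, and your flagging of the degree-indexing reconciliation as the only genuine subtlety is apt.
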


\begin{remark}
In the two-dimensional case, projections with completely analogous properties 
exist for the Raviart-Thomas and Brezzi-Douglas-Marini elements. 
\end{remark}


\begin{thebibliography}{10}

\bibitem{amrouche1998vector}
{\sc C.~Amrouche, C.~Bernardi, M.~Dauge, and V.~Girault}, {\em Vector
  potentials in three-dimensional non-smooth domains}, Mathematical Methods in
  the Applied Sciences, 21 (1998), pp.~823--864.

\bibitem{AFW1}
{\sc D.~N. Arnold, R.~S. Falk, and R.~Winther}, {\em Finite element exterior
  calculus, homological techniques, and applications}, Acta Numerica, 15
  (2006), pp.~1--155.

\bibitem{afwgeodecomp}
\leavevmode\vrule height 2pt depth -1.6pt width 23pt, {\em Geometric
  decompositions and local bases for spaces of finite element differential
  forms}, Computer Methods in Applied Mechanics and Engineering, 198 (2009),
  pp.~1660--1672.

\bibitem{AFW2}
\leavevmode\vrule height 2pt depth -1.6pt width 23pt, {\em Finite element
  exterior calculus: from {Hodge} theory to numerical stability}, Bulletin of
  the American Mathematical Society, 47 (2010), pp.~281--354.

\bibitem{burenkov1998sobolev}
{\sc V.~I. Burenkov}, {\em {S}obolev spaces on domains}, vol.~137, Springer,
  1998.

\bibitem{camacho2015L2}
{\sc F.~Camacho and A.~Demlow}, {\em ${L}^2$ and pointwise a posteriori error
  estimates for {FEM} for elliptic {PDE}s on surfaces}, IMA Journal of
  Numerical Analysis, 35 (2015), pp.~1199--1227.

\bibitem{chaumont2021equivalence}
{\sc T.~Chaumont-Frelet and M.~Vohral{\'\i}k}, {\em Equivalence of local-best
  and global-best approximations in ${H}(\curl)$}, Calcolo, 58 (2021),
  pp.~1--12.

\bibitem{christiansen2008construction}
{\sc S.~H. Christiansen}, {\em A construction of spaces of compatible
  differential forms on cellular complexes}, Mathematical Models and Methods in
  Applied Sciences, 18 (2008), pp.~739--757.

\bibitem{christiansen2011topics}
{\sc S.~H. Christiansen, H.~Z. Munthe-Kaas, and B.~Owren}, {\em Topics in
  structure-preserving discretization}, Acta Numerica, 20 (2011), p.~1.

\bibitem{christiansen2008smoothed}
{\sc S.~H. Christiansen and R.~Winther}, {\em Smoothed projections in finite
  element exterior calculus}, Mathematics of Computation, 77 (2008),
  pp.~813--829.

\bibitem{ciarlet2013analysis}
{\sc P.~Ciarlet}, {\em Analysis of the {S}cott--{Z}hang interpolation in the
  fractional order {S}obolev spaces}, Journal of Numerical Mathematics, 21
  (2013), pp.~173--180.

\bibitem{clement1975approximation}
{\sc P.~Cl{\'e}ment}, {\em Approximation by finite element functions using
  local regularization}, Revue fran{\c{c}}aise d'automatique, informatique,
  recherche op{\'e}rationnelle. Analyse num{\'e}rique, 9 (1975), pp.~77--84.

\bibitem{costabel1991coercive}
{\sc M.~Costabel}, {\em A coercive bilinear form for maxwell's equations},
  Journal of mathematical analysis and applications, 157 (1991), pp.~527--541.

\bibitem{demkowicz2005h1}
{\sc L.~Demkowicz and A.~Buffa}, {\em ${H}^1$, ${H}(curl)$ and
  ${H}(div)$-conforming projection-based interpolation in three dimensions:
  Quasi-optimal p-interpolation estimates}, Computer Methods in Applied
  Mechanics and Engineering, 194 (2005), pp.~267--296.

\bibitem{di2012hitchhiker}
{\sc E.~Di~Nezza, G.~Palatucci, and E.~Valdinoci}, {\em Hitchhiker's guide to
  the fractional {S}obolev spaces}, Bulletin des sciences math{\'e}matiques,
  136 (2012), pp.~521--573.

\bibitem{dupont1980polynomial}
{\sc T.~Dupont and R.~Scott}, {\em Polynomial approximation of functions in
  {Sobolev} spaces}, Mathematics of Computation, 34 (1980), pp.~441--463.

\bibitem{ern2022equivalence}
{\sc A.~Ern, T.~Gudi, I.~Smears, and M.~Vohral{\'\i}k}, {\em Equivalence of
  local-and global-best approximations, a simple stable local commuting
  projector, and optimal hp approximation estimates in ${H}(div)$}, IMA Journal
  of Numerical Analysis, 42 (2022), pp.~1023--1049.

\bibitem{ern2016mollification}
{\sc A.~Ern and J.-L. Guermond}, {\em Mollification in strongly {L}ipschitz
  domains with application to continuous and discrete de {R}ham complexes},
  Computational Methods in Applied Mathematics, 16 (2016), pp.~51--75.

\bibitem{ern2017finite}
\leavevmode\vrule height 2pt depth -1.6pt width 23pt, {\em Finite element
  quasi-interpolation and best approximation}, ESAIM: Mathematical Modelling
  and Numerical Analysis, 51 (2017), pp.~1367--1385.

\bibitem{ern2021finite}
\leavevmode\vrule height 2pt depth -1.6pt width 23pt, {\em Finite Elements II},
  Springer, 2021.

\bibitem{falk2014local}
{\sc R.~Falk and R.~Winther}, {\em Local bounded cochain projections},
  Mathematics of Computation, 83 (2014), pp.~2631--2656.

\bibitem{licht2021}
{\sc E.~S. Gawlik, M.~J. Holst, and M.~W. Licht}, {\em Local finite element
  approximation of {S}obolev differential forms}, ESAIM:M2AN, 5 (2021),
  pp.~2075--2099.

\bibitem{heuer2014equivalence}
{\sc N.~Heuer}, {\em On the equivalence of fractional-order {S}obolev
  semi-norms}, Journal of Mathematical Analysis and Applications, 417 (2014),
  pp.~505--518.

\bibitem{hiptmair2002finite}
{\sc R.~Hiptmair}, {\em Finite elements in computational electromagnetism},
  Acta Numerica, 11 (2002), pp.~237--339.

\bibitem{licht2019mixed}
{\sc M.~Licht}, {\em Smoothed projections and mixed boundary conditions},
  Mathematics of Computation, 88 (2019), pp.~607--635.

\bibitem{licht2019smoothed}
\leavevmode\vrule height 2pt depth -1.6pt width 23pt, {\em Smoothed projections
  over weakly {L}ipschitz domains}, Mathematics of Computation, 88 (2019),
  pp.~179--210.

\bibitem{licht2017priori}
{\sc M.~W. Licht}, {\em On the a priori and a posteriori error analysis in
  finite element exterior calculus}, PhD thesis, Dissertation, Department of
  Mathematics, University of Oslo, Norway, 2017.

\bibitem{licht2022basis}
{\sc M.~W. Licht}, {\em On basis constructions in finite element exterior
  calculus}, Adv Comput Math, 14 (2022).

\bibitem{mitrea2008traces}
{\sc D.~Mitrea, M.~Mitrea, and M.-C. Shaw}, {\em Traces of differential forms
  on {L}ipschitz domains, the boundary {de Rham} complex, and {H}odge
  decompositions}, Indiana University mathematics journal,  (2008),
  pp.~2061--2095.

\bibitem{monk2003finite}
{\sc P.~Monk}, {\em Finite element methods for {M}axwell's equations}, Oxford
  University Press, USA, 2003.

\bibitem{oswald1993bpx}
{\sc P.~Oswald}, {\em On a {BPX}-preconditioner for {P1} elements}, Computing,
  51 (1993), pp.~125--133.

\bibitem{scott1990finite}
{\sc L.~R. Scott and S.~Zhang}, {\em Finite element interpolation of nonsmooth
  functions satisfying boundary conditions}, Mathematics of Computation, 54
  (1990), pp.~483--493.

\bibitem{slobodeckij1958generalized}
{\sc L.~Slobodeckij}, {\em Generalized {S}obolev spaces and their applications
  to boundary value problems of partial differential equations}, Gos. Ped.
  Inst. Ucep. Zap, 197 (1958), pp.~54--112.

\bibitem{veeser2016approximating}
{\sc A.~Veeser}, {\em Approximating gradients with continuous piecewise
  polynomial functions}, Foundations of Computational Mathematics, 16 (2016),
  pp.~723--750.

\end{thebibliography}
\end{document}